 \newtheorem{thm}{Theorem}[section]
 \newtheorem{cor}[thm]{Corollary}
 \newtheorem{lem}[thm]{Lemma}
 \newtheorem{prop}[thm]{Proposition}
 \theoremstyle{definition}
 \theoremstyle{remark}
 \newtheorem{rem}[thm]{Remark}
\newtheorem{ex}[thm]{Example}
 \numberwithin{equation}{section}
\newcommand{\script}[1]{\EuScript{#1}}
\theoremstyle{plain}
\newtheorem{cond}[thm]{Condition}
\DeclareMathOperator*{\argmin}{arg\,min}
\DeclareMathOperator*{\supp}{supp}
\DeclareMathOperator*{\zeros}{null}
\begin{document}

%
%
%
%
%
%
%
%
%

\title[Multidimensional Moment Problem with Complexity Constraint]
 {The Multidimensional Moment Problem with Complexity Constraint}

\author[Karlsson]{Johan Karlsson}
\address{%
Department of Mathematics \\
Royal Institute of Technology \\
SE-100 44 Stockholm \\
Sweden}

\email{johan.karlsson@math.kth.se}

\author[Lindquist]{Anders Lindquist}
\address{%
Departments of Automation  and Mathematics\\
Shanghai Jiao Tong University \\
200240 Shanghai \\
China\\
and\\
Department of Mathematics \\
Royal Institute of Technology \\
SE-100 44 Stockholm \\
Sweden}

\email{alq@kth.se}

\author[Ringh]{Axel Ringh}
\address{%
Department of Mathematics \\
Royal Institute of Technology \\
SE-100 44 Stockholm \\
Sweden}

\email{aringh@kth.se}

\thanks{This work was supported by the Swedish Foundation of Strategic Research (SSF), the Swedish Research Foundation (VR) and the Center for Industrial and Applied Mathematics (CIAM)}


\subjclass{Primary 30E05; Secondary 42A70, 44A60, 47A57, 93A30}

\keywords{Moment problems, multidimensional moment problems, complexity constraints, optimization, smooth parameterization}



\begin{abstract}
A long series of previous papers have been devoted to the (one-dimensional) moment problem with nonnegative rational measure. The rationality assumption is a complexity constraint motivated by applications where a parameterization of the solution set in terms of a bounded finite number of parameters is required. 
In this paper we provide a complete solution of the multidimensional moment problem with a complexity constraint also allowing for solutions that require a singular measure added to the rational, absolutely continuous one. Such solutions occur on the boundary of a certain convex cone of solutions. In this paper we provide complete parameterizations of all such solutions. We also provide errata for a previous paper in this journal coauthored  by one of the authors of the present paper.
\end{abstract}

\maketitle


\section{Introduction}

The multidimensional moment problem considered in this paper amounts to finding a nonnegative measure $d\mu$ on a compact subset $K$ of $\mathbb{R}^d$ solving the equations
\begin{equation}
\label{momenteqn}
c_k=\int_K\alpha_k d\mu, \quad k=1,2,\dots,n ,
\end{equation}
where $c_1,c_2,\dots,c_n$ are given numbers and  $\alpha_1,\alpha_2,\dots\alpha_n$ are given linearly independent basis functions defined on $K$.  More precisely we are interested in measures of the type
\begin{equation}
\label{complexity-constraint}
d\mu(x)=\frac{P(x)}{Q(x)}dx +d\hat{\mu}(x),
\end{equation}
where $P$ and $Q$ are nonnegative functions on $K$ formed by linear combinations of the basis functions and $d\hat{\mu}$ is a singular measure. 

Such constraints are nonclassical and motivated by applications. An important special case is to find an absolutely continuous measure 
\begin{equation}
\label{PositiveMeasure}
d\mu(x)=\frac{P(x)}{Q(x)}dx 
\end{equation}
satisfying \eqref{momenteqn}, where $P$ and $Q$ are as in \eqref{complexity-constraint}. 
Clearly \eqref{PositiveMeasure} is a complexity constraint depending on a finite number of parameters.

In the one-dimensional case, (generalized) moment problems with the complexity constraint \eqref{PositiveMeasure} has been considered in a long series of papers \cite{georgiou1999theinterpolation,BLkimura,georgiou2005solution,georgiou2006relative,byrnes2006thegeneralized,byrnes2008important,BLkrein}.  This study started with the {\em rational covariance extension problem}, which is a trigonometric moment problem with rational positive measure formulated by Kalman \cite{kalman1981realization}, i.e., $\alpha_1,\alpha_2,\dots\alpha_n$ are the trigonometric monomials, $e^{ikx}$, $k=0,1,\dots, n-1$.  In this case, given moments $c_1,c_2,\dots,c_n$ that admit solutions of \eqref{momenteqn}, it was shown in \cite{georgiou1983partial,georgiou1987realization} that there exists a  solution \eqref{PositiveMeasure} for each choice of  positive $P$, and in \cite{byrnes1995acomplete} it was established that this parameterization of the solution set is complete and smooth, i.e., the map from $P$ to $Q$ is a diffeomorphism. A constructive proof based on a certain family of convex optimization problems was given in \cite{BGuL,byrnes2001fromfinite,byrnes2002identifyability}.  These results were modified in steps to the case that  $\alpha_1,\alpha_2,\dots\alpha_n$ are Herglotz kernels in \cite{georgiou87NP,georgiou1999theinterpolation,byrnes2001ageneralized,pavon2006onthegeorgiou}, leading to Nevanlinna-Pick interpolation with positive rational measure and more general moment problems with complexity constraints \cite{GL1,BLkimura,byrnes2006thegeneralized,byrnes2008important,BLkrein,FerrantePavonRamponi2008,PavonFerrante2013}. 

In this paper we begin by generalizing certain results in \cite{byrnes2006thegeneralized} concerning moment problems over the general class of measures \eqref{PositiveMeasure} to the multidimensional case ($d>1$), but we shall also take a fresh look at the case $d=1$. However, when allowing  $P$ and $Q$ to have zeros in $K$, the class of measures have to be extended to \eqref{complexity-constraint}. Unfortunately, a key result for this case in \cite{byrnes2006thegeneralized} is incorrect, and we take the opportunity to provide a correction in this paper. 

The multidimensional moment problem is important in many applications, such as imaging, radar, sonar, and medical diagnostics \cite{milanfar1996moment,jakowatz1996spotlight,ekstrom1984digital}. A series of papers by Lang and McClellan \cite{lang1982multidimensional,lang1983spectral,mcclellan1982multi-dimensional} are of special interest to us, since in a certain sense they provide an interesting overlap with the theory described above. For this reason we shall have reasons to return to some of their results in the rest of this paper. 

The outline of the paper is as follows. In Section~\ref{sec:cones} we define the general multidimensional moment problem and introduce a set of dual cones that will be fundamental in the subsequent results. We present a generalization to the multidimensional case of a theorem by Krein and Nudelman \cite[p. 58]{krein1977themarkov} on the existence of solutions to the moment problem. In Section~\ref{sec:rational measure} we generalize some basic results in \cite{byrnes2006thegeneralized} for moment problems with rational measure to the multivariable case, and in Section~\ref{sec:optimization} we introduce the basic optimization problem, generalized to the multivariable case, and prove the basic parameterization result in the rational case. In Section~\ref{sec:boundary} we extend the parameterization to the general case when, e.g., $P$ and $Q$ are allowed to have zeros in $K$, resulting in solutions of the form \eqref{complexity-constraint}.
In Section~\ref{boundarymomentsec} we consider the case when the moments are placed on the boundary of the feasible set.  Section~\ref{appendix} is an appendix to which we have deferred some supporting results and proofs for better readability. Finally, Section~\ref{errata} contains errata for \cite{byrnes2006thegeneralized}. 


\section{The general multidimensional moment problem}\label{sec:cones}

Let $\{\alpha_1,\alpha_2,\dots,\alpha_n\}$ be a set of real-valued, continuous functions defined on the compact set $K\subset\mathbb{R}^d$. We assume that $K$ has an interior of dimension $d$, the closure of which is precisely $K$.  The functions $\alpha_1,\alpha_2,\dots,\alpha_n$ are assumed to be linearly independent, an assumption that will be retained throughout the paper. This condition holds in most interesting moment problems, as, for example, the trigonometric moment problem, Nevanlinna-Pick interpolation and the power moment problem. 


Given real numbers $c_1,c_2,\dots,c_n$, we shall consider the (truncated) moment problem to determine a bounded nonnegative measure $d\mu$ such that \eqref{momenteqn} holds. 
Whenever convenient, we shall write \eqref{momenteqn} in the vector form
\begin{equation}
\label{vectormoment}
c=\int_K\alpha d\mu,
\end{equation}
where $c=(c_1,c_2,\dots,c_n)'$ and  $\alpha=(\alpha_1,\alpha_2,\dots,\alpha_n)'$ are column vectors and prime $(')$ denotes transpose. 

The assumption that the basis functions are real can be done without loss of generality.  In fact, in the case of complex basis functions (as, for example, in the trigonometric moment problem) and complex moments, we merely exchange a complex moment equation with two real moment conditions \cite{byrnes2006thegeneralized}.

Next we define the open convex cone $\mathfrak{P}_+\subset\mathbb{R}^n$ of sequences  $p=(p_1,p_2,\dots,p_n)$ such that the corresponding generalized polynomial
\begin{equation}
\label{Pfrak}
P(x)=\sum_{k=1}^n p_k\alpha_k(x)
\end{equation}
is positive for all $x=(x_1,\dots,x_d)\in K$. Moreover, we denote by  $\bar{\mathfrak{P}}_+$ its closure and by $\partial\mathfrak{P}_+$ its boundary $\bar{\mathfrak{P}}_+\setminus\mathfrak{P}_+$. It is easy to see that $P\equiv 0$ if and only if $p=0$, since $\alpha_1,\alpha_2,\dots,\alpha_n$ are  linearly independent.


Throughout the paper we assume that the zero locus of $P$ has measure zero for each $p\in\bar{\mathfrak{P}}_+\setminus \{0\}$.   
Again many important moment problems, even in the multidimensional case, have this property.   

Moreover, we define the dual cone 
\begin{equation}
\label{Cplus}
\mathfrak{C}_+ =\{ c\in\mathbb{R}^n\mid \text{$\langle c,p\rangle >0$ for all $p\in\bar{\mathfrak{P}}_+\setminus \{0\}$}\},
\end{equation}
where $\langle c,p\rangle$ is the inner product
\begin{displaymath}
\langle c,p\rangle=\sum_{k=1}^n c_kp_k.
\end{displaymath}

\begin{prop}\label{prop:C+notempty}
The dual cone $\mathfrak{C}_+$ is nonempty if\/ $\bar{\mathfrak{P}}_+ \neq \{0\}$.
\end{prop}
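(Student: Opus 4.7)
The plan is to exhibit an explicit element of $\mathfrak{C}_+$ by integrating the basis functions against Lebesgue measure on $K$. Concretely, I would define
\begin{equation*}
c_k := \int_K \alpha_k(x)\, dx, \qquad k=1,\dots,n,
\end{equation*}
and verify that $c=(c_1,\dots,c_n)$ satisfies $\langle c,p\rangle >0$ for every $p\in\bar{\mathfrak{P}}_+\setminus\{0\}$.

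First I would argue that $\bar{\mathfrak{P}}_+$ consists of coefficient vectors $p$ whose generalized polynomial $P=\sum p_k\alpha_k$ is \emph{nonnegative} on $K$. Indeed, if $p\in\bar{\mathfrak{P}}_+$, there is a sequence $p^{(j)}\in\mathfrak{P}_+$ converging to $p$; continuity of the $\alpha_k$ on the compact set $K$ gives uniform convergence of $P^{(j)}$ to $P$, and since each $P^{(j)}>0$ on $K$ we get $P\ge 0$ on $K$.

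Next, by linear independence of $\alpha_1,\dots,\alpha_n$, any $p\ne 0$ yields $P\not\equiv 0$, so there exists $x_0\in K$ with $P(x_0)>0$. Since $K$ is the closure of its $d$-dimensional interior, we may take $x_0$ in the interior (by continuity of $P$ a nearby interior point still has $P>0$), and continuity of $P$ then provides an open neighborhood $U\subset K$ on which $P>0$. Hence
\begin{equation*}
\langle c,p\rangle = \sum_{k=1}^n p_k\int_K\alpha_k(x)\,dx = \int_K P(x)\,dx \ge \int_U P(x)\,dx > 0,
\end{equation*}
which shows $c\in\mathfrak{C}_+$ and therefore $\mathfrak{C}_+\ne\emptyset$.

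There is no real obstacle here; the only subtle point is the passage from $P(x_0)>0$ at some point to strict positivity of the integral, which is settled by continuity of $P$ together with the hypothesis that the interior of $K$ is $d$-dimensional. (The blanket assumption that the zero locus of $P$ has measure zero is not even needed for this step.) I would therefore present the argument in a single short paragraph after stating the candidate $c$.
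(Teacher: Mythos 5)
Your proof is correct and follows essentially the same approach as the paper: both take $c=\int_K\alpha\,dx$ and argue that $\langle c,p\rangle=\int_K P\,dx>0$ because $P\ge0$, $P\not\equiv0$, and continuity yields an open set of positive measure where $P>0$. The only difference is that you spell out, slightly more carefully than the paper does, why that open set has positive Lebesgue measure (via the standing hypothesis that $K$ is the closure of its $d$-dimensional interior); this is a harmless elaboration, not a different method.
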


\begin{proof}
Take $c = \int_K \alpha dx$. Then, for any $p \in \bar{\mathfrak{P}}_+ \setminus \{0\}$, we have
\begin{equation}\label{<c,p>geq0}
\langle c, p \rangle =  \sum_{k=1}^n \int_K \alpha_k p_k dx = \int_K P dx \geq 0 .
\end{equation}
However, due to the continuity and linear independence, $P \not \equiv 0$, and hence there is always a small neighborhood in which $P > 0$. Consequently, the inequality in \eqref{<c,p>geq0} is strict for all $p \in \bar{\mathfrak{P}}_+ \setminus \{0\}$, and hence $c\in\mathfrak{C}_+$.
\end{proof}

The dual cone $\mathfrak{C}_+$ is also an open cone, and we denote by $\bar{\mathfrak{C}}_+$ its closure and by $\partial\mathfrak{C}_+$ its boundary. 

\begin{prop}\label{mu2cprop}
Any $c\in\mathbb{R}^n$ satisfying \eqref{vectormoment} for some nonnegative measure $d\mu$ belongs to $\bar{\mathfrak{C}}_+$. 
\end{prop}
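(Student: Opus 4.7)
The plan is to observe that the dual pairing $\langle c,p\rangle$ admits a direct integral representation against $d\mu$, which makes the sign condition that defines $\mathfrak{C}_+$ almost automatic, and then to upgrade the nonstrict inequality to strict by a one-parameter perturbation inside the open cone.

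First I would verify that for every $p\in\bar{\mathfrak{P}}_+$, the corresponding generalized polynomial $P$ is nonnegative on $K$. This is just continuity: since $K$ is compact and $\alpha_1,\dots,\alpha_n$ are continuous, any sequence $p_n\in\mathfrak{P}_+$ with $p_n\to p$ yields uniformly convergent polynomials $P_n\to P$ on $K$, and pointwise positivity passes to pointwise nonnegativity in the limit. With this in hand, substituting \eqref{vectormoment} and swapping the finite sum with the integral gives
$$\langle c,p\rangle=\sum_{k=1}^n p_k\int_K\alpha_k d\mu=\int_K P d\mu\geq 0,$$
because $d\mu$ is nonnegative and $P\geq 0$ on $K$.

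To conclude that $c\in\bar{\mathfrak{C}}_+$, I would separate the trivial case $\bar{\mathfrak{P}}_+=\{0\}$ (where the defining condition of $\mathfrak{C}_+$ is vacuous, so $\mathfrak{C}_+=\mathbb{R}^n$ and the conclusion is immediate) from the generic case. In the latter, Proposition~\ref{prop:C+notempty} supplies some $c_0\in\mathfrak{C}_+$, and for each $\epsilon>0$ and each $p\in\bar{\mathfrak{P}}_+\setminus\{0\}$ one has
$$\langle c+\epsilon c_0,p\rangle=\langle c,p\rangle+\epsilon\langle c_0,p\rangle>0,$$
so $c+\epsilon c_0\in\mathfrak{C}_+$. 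Letting $\epsilon\downarrow 0$ places $c$ in the closure $\bar{\mathfrak{C}}_+$.

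I do not expect a serious obstacle: the argument is a standard duality/bipolar-style perturbation. The only point requiring any care is the passage from the strict inequality defining the open cone $\mathfrak{P}_+$ to the nonstrict one needed for the integral bound, which is handled by uniform convergence on the compact set $K$.
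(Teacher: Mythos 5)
Your proof is correct and follows the paper's route: both rest on the computation $\langle c,p\rangle = \int_K P\,d\mu \geq 0$ for all $p\in\bar{\mathfrak{P}}_+$, using nonnegativity of $P$ on $K$ and of $d\mu$. You additionally make explicit, via the perturbation $c+\epsilon c_0$ with $c_0\in\mathfrak{C}_+$ supplied by Proposition~\ref{prop:C+notempty}, why the nonstrict dual inequality places $c$ in $\bar{\mathfrak{C}}_+$; the paper states this final step without elaboration, and your filling-in is sound.
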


\begin{proof}
If $c$ satisfies \eqref{vectormoment}, then
\begin{displaymath}
\langle c, p\rangle =\sum_{k=1}^n p_k\int_K\alpha_k d\mu = \int_K P d\mu \geq 0
\end{displaymath}
for all $p\in\bar{\mathfrak{P}}_+$. Hence $c\in\bar{\mathfrak{C}}_+$, as claimed.  
\end{proof}

The converse also holds. The proof of the following generalization to the multidimensional case of a result in Krein and Nudelman \cite[p. 58]{krein1977themarkov} is based on Theorem~\ref{thm:mainMoment} and  will be deferred to Section~\ref{boundarymomentsec}.

\begin{thm}\label{c2muthm}
Suppose that  $\mathfrak{P}_+ \neq \emptyset$. Then for any $c \in \bar{\mathfrak{C}}_+$ there exists a bounded nonnegative measure $d\mu$  such that \eqref{vectormoment} holds. 
\end{thm}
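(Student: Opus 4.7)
The plan is to reduce the theorem to the case $c \in \mathfrak{C}_+$ (the interior), which is handled by the forthcoming Theorem~\ref{thm:mainMoment}, by means of a weak-$*$ compactness argument on an approximating sequence of measures. So the overall structure will be: (i) if $c$ is already in the interior $\mathfrak{C}_+$, invoke Theorem~\ref{thm:mainMoment}, which produces a measure of the form $P/Q\,dx$ and in particular a bounded nonnegative measure; (ii) otherwise $c \in \partial\mathfrak{C}_+$, and we obtain $\mu$ as a weak-$*$ limit of measures representing perturbed moment vectors.

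For step (ii), since $\mathfrak{P}_+ \neq \emptyset$, Proposition~\ref{prop:C+notempty} guarantees a fixed $c_0 \in \mathfrak{C}_+$, and the sequence $c_k := c + \tfrac{1}{k}c_0$ lies in the open cone $\mathfrak{C}_+$ and converges to $c$. Theorem~\ref{thm:mainMoment} then delivers bounded nonnegative measures $\mu_k$ on $K$ with $\int_K \alpha\,d\mu_k = c_k$. The core quantitative step is a uniform bound on the total masses $\mu_k(K)$. Picking any $p \in \mathfrak{P}_+$ and using that $P = \sum p_i\alpha_i$ is continuous and strictly positive on the compact set $K$, we get some $\varepsilon > 0$ with $P \geq \varepsilon$ on $K$, whence
\begin{equation*}
\varepsilon\,\mu_k(K) \leq \int_K P\,d\mu_k = \langle p, c_k\rangle \longrightarrow \langle p, c\rangle,
\end{equation*}
so $\{\mu_k(K)\}$ is bounded.

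Viewing the $\mu_k$ as elements of the dual of $C(K)$ via the Riesz representation theorem, Banach--Alaoglu (or Helly's selection theorem) extracts a weak-$*$ convergent subsequence with limit $\mu$, which is a bounded nonnegative Radon measure on $K$. Continuity of each $\alpha_i$ on the compact set $K$ gives $\int_K \alpha\,d\mu = \lim_k \int_K \alpha\,d\mu_k = \lim_k c_k = c$, which finishes the proof. The main conceptual point, and the step I expect to be most delicate, is the uniform mass bound: the hypothesis $\mathfrak{P}_+ \neq \emptyset$ is used in an essential way here, as without a strictly positive generalized polynomial the sequence $\{\mu_k\}$ could fail to be tight and escape to infinity, yielding no limit measure.
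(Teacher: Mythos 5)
Your proposal is correct and mirrors the paper's own argument step for step: reduce to $c\in\partial\mathfrak{C}_+$, perturb via $c+\varepsilon c_0$ with $c_0$ from Proposition~\ref{prop:C+notempty}, invoke Theorem~\ref{thm:mainMoment} for each perturbed vector, obtain a uniform bound on total mass from a strictly positive $P_0$ with $p_0\in\mathfrak{P}_+$, and extract a weak-$*$ convergent subsequence. The only cosmetic difference is your discrete sequence $c_k=c+\tfrac1k c_0$ versus the paper's continuous parameter $\varepsilon$.
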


This theorem ensures that the space 
\begin{equation}
\label{fracMc}
\mathfrak{M}_c=\left\{ d\mu\geq 0\mid \int_K\alpha\, d\mu =c\right\}
\end{equation}
of bounded measures  is nonempty for all $c \in \bar{\mathfrak{C}}_+$. However, in general, there are infinitely many solutions, and the extreme points of $\mathfrak{M}_c$ are of particular interest. 

\begin{prop}\label{dmusupportn}
Suppose that  $\mathfrak{P}_+ \neq \emptyset$.  Then, for all $c \in \bar{\mathfrak{C}}_+$ there is a $d\mu\in\mathfrak{M}_c$ that is a discrete measure with support in at most $n$ points in $K$. 
\end{prop}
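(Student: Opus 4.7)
My plan is to combine a duality identification of $\bar{\mathfrak{C}}_+$ with Carath\'eodory's theorem for convex cones, and then read off the discrete measure directly from the resulting conic representation. Write $\alpha(K):=\{\alpha(x):x\in K\}\subset\mathbb{R}^n$ and let $\mathrm{cone}(\alpha(K))$ denote the set of finite non-negative combinations $\sum_i\lambda_i\alpha(x_i)$ with $\lambda_i\geq 0$ and $x_i\in K$. The first step is to identify the closure $\overline{\mathrm{cone}(\alpha(K))}$ with $\bar{\mathfrak{C}}_+$. Since $p\in\bar{\mathfrak{P}}_+$ iff $P(x)=\langle p,\alpha(x)\rangle\geq 0$ for all $x\in K$, the elements of $\bar{\mathfrak{P}}_+$ are precisely the vectors nonnegative against every element of $\mathrm{cone}(\alpha(K))$. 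The bipolar theorem then yields $\overline{\mathrm{cone}(\alpha(K))}=\{c:\langle c,p\rangle\geq 0\ \forall p\in\bar{\mathfrak{P}}_+\}$, and a short perturbation argument using some $c_0\in\mathfrak{C}_+$ (available by Proposition~\ref{prop:C+notempty}) identifies the right-hand side with $\bar{\mathfrak{C}}_+$.

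The crucial next step is to upgrade this to $\mathrm{cone}(\alpha(K))=\bar{\mathfrak{C}}_+$ by showing the cone is already closed; this is where the hypothesis $\mathfrak{P}_+\neq\emptyset$ will enter. Pick $p^*\in\mathfrak{P}_+$, so by continuity and compactness $P^*(x)\geq\varepsilon>0$ on $K$. Then any representation $c=\sum_i\lambda_i\alpha(x_i)$ satisfies $\sum_i\lambda_i\leq \langle p^*,c\rangle/\varepsilon$, so the total mass of the representation is controlled by $c$. Given a convergent sequence $c_k\to c$ in $\mathrm{cone}(\alpha(K))$, Carath\'eodory's theorem for cones in $\mathbb{R}^n$ lets me write each $c_k$ as a combination of at most $n$ atoms; the bound above keeps the weights bounded, the atoms lie in the compact set $K$, and passing to a convergent subsequence produces $c$ as a finite combination in $\mathrm{cone}(\alpha(K))$.

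Finally I would apply Carath\'eodory once more: every $c\in\bar{\mathfrak{C}}_+=\mathrm{cone}(\alpha(K))$ admits a representation $c=\sum_{i=1}^n\lambda_i\alpha(x_i)$ with $\lambda_i\geq 0$ and $x_i\in K$, so the discrete measure $d\mu:=\sum_{i=1}^n\lambda_i\delta_{x_i}$ lies in $\mathfrak{M}_c$ and is supported on at most $n$ points. Note that this argument proves $\mathfrak{M}_c\neq\emptyset$ as a byproduct, so it does not rely on Theorem~\ref{c2muthm}.

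The main obstacle is the closedness step in the second paragraph; the hypothesis $\mathfrak{P}_+\neq\emptyset$ is essential and cannot simply be dropped. As a sanity check, the example $K=[0,1]$, $\alpha_1(x)=x$, $\alpha_2(x)=x^2$ has $\mathfrak{P}_+=\emptyset$ yet $(1,0)\in\bar{\mathfrak{C}}_+$, and no finite discrete measure realizes these moments: $\int x^2\,d\mu=0$ forces $\mu$ to be concentrated at $0$, contradicting $\int x\,d\mu=1$. The argument must therefore locate the use of $\mathfrak{P}_+\neq\emptyset$ precisely in the uniform positive lower bound on $P^*$, which in turn controls the coefficients of any conic representation and prevents $\alpha(K)$ from approaching the origin.
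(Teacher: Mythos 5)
Your proof is correct but takes a genuinely different route from the paper. The paper proves this by working with the solution set $\mathfrak{M}_c$ directly: it invokes Theorem~\ref{c2muthm} to see that $\mathfrak{M}_c$ is nonempty, shows $\mathfrak{M}_c$ is weak$^*$ compact and convex (total variation bounded via some $p\in\mathfrak{P}_+$), applies Krein--Milman to extract an extreme point, and then argues that any extreme point supported on more than $n$ points would admit a nontrivial perturbation, a contradiction. You instead work on the moment side: identify $\bar{\mathfrak{C}}_+$ with $\overline{\mathrm{cone}(\alpha(K))}$ via the bipolar theorem, show that $\mathrm{cone}(\alpha(K))$ is already closed by combining the total-mass bound $\sum_i\lambda_i\leq\langle p^*,c\rangle/\varepsilon$ with Carath\'eodory for cones and compactness of $K$, and then read off a discrete representing measure from a conic Carath\'eodory decomposition. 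Two notable differences: (i) your argument is self-contained and does not rely on Theorem~\ref{c2muthm} --- in fact it reproves that existence result as a byproduct --- whereas the paper's proof of this proposition relies on Theorem~\ref{c2muthm}, whose proof is in turn deferred to Section~6 and rests on the optimization machinery of Theorem~\ref{thm:mainMoment}; (ii) the paper's route uses infinite-dimensional convexity (weak$^*$ compactness in $C(K)^*$, Krein--Milman), while yours is essentially finite-dimensional (bipolar and Carath\'eodory in $\mathbb{R}^n$), which is arguably more elementary. Your localization of where $\mathfrak{P}_+\neq\emptyset$ enters (uniform lower bound on $P^*$ controlling the weights, hence closedness of the cone), together with the counterexample $\alpha=(x,x^2)$ on $[0,1]$, is a genuine clarification: the paper uses the same hypothesis in a structurally similar way (to bound $\mu(K)$), but does not exhibit failure without it.
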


\begin{proof}
Following the proof of Proposition 6 in \cite{KarlssonGeorgiou2005}, we use the Krein-Millman Theorem \cite{KreinMillman1940,Girsanov1972} to show the existence of an extreme point, and then we show that any extreme point has the claimed properties.  The space ${\mathfrak M}_c$ is the intersection of a positive cone and a closed subspace, and hence it is convex and closed. Let $p\in {\mathfrak P}_+$. Then $P(x)\geq \varepsilon$ for some $\varepsilon >0$, and hence  $\langle c,p \rangle=\int_K Pd\mu\geq \varepsilon \mu(K)$. Therefore the norm (total variation) of the elements of ${\mathfrak M}_c$ is bounded by $\langle c,p \rangle/\varepsilon$, which implies that ${\mathfrak M}_c$ is compact in the weak* topology \cite[p. 19]{Girsanov1972}. Then, since ${\mathfrak M}_c$ is a compact convex set in a locally convex topological linear space, it is the closure of the convex hull of its extreme points \cite{KreinMillman1940,Girsanov1972}. Since the set ${\mathfrak M}_c$ is nonempty (Theorem~\ref{c2muthm}), it has at least one extreme point. 

We want to prove that the extreme points of ${\mathfrak M}_c$ have support in at most $n$ points. To this end, suppose the contrary. Then there is an extreme point $d\mu=\sum_{k=1}^{n+1}\beta_k d\mu_k$ for which the measures $d\mu_k\ge0$ have distinct support and  $\beta_k>0$, $k=1,\ldots, n+1$, and therefore 
\begin{displaymath}
c=\int_K \alpha d\mu=\int_K \alpha [d\mu_1\, \cdots \, d\mu_{n+1}]\begin{bmatrix} \beta_1\\\vdots\\ \beta_{n+1}\end{bmatrix}.
\end{displaymath} 
Since $\int_K \alpha [d\mu_1\, \cdots \, d\mu_{n+1}]\in {\mathbb R}^{n\times (n+1)}$ has linearly dependent columns, there is a nontrivial affine set of solutions $\{\beta_k\}$, which contradicts the assumption that $d\mu$ is an extreme point of ${\mathfrak M}_c$. Hence the extreme points of ${\mathfrak M}_c$ has support in at most $n$ points. 
\end{proof}

\begin{cor}
The extreme points of ${\mathfrak M}_c$ have support in at most $n$ points.
\end{cor}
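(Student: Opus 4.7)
The plan is to recognize that the final paragraph in the proof of Proposition~\ref{dmusupportn} already contains the full content of this corollary; the proposition's conclusion (existence of some discrete $d\mu \in \mathfrak{M}_c$ supported in at most $n$ points) is strictly weaker than what that paragraph actually establishes. So the proof reduces to repeating the contradiction argument and isolating it as a standalone statement.

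Concretely, I would fix an extreme point $d\mu \in \mathfrak{M}_c$ (guaranteed to exist by the Krein--Milman step of Proposition~\ref{dmusupportn}) and suppose for contradiction that $\supp d\mu$ contains at least $n+1$ points. Then I can partition $\supp d\mu$ into $n+1$ disjoint Borel sets each of positive $d\mu$-measure, obtaining a decomposition $d\mu = \sum_{k=1}^{n+1} \beta_k d\mu_k$ with $\beta_k > 0$, pairwise disjoint supports, and each $d\mu_k$ a probability measure.

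Next I would form the matrix $A = \int_K \alpha\, [d\mu_1\,\cdots\,d\mu_{n+1}] \in \mathbb{R}^{n \times (n+1)}$; since it has more columns than rows, its kernel contains a nonzero vector $\xi \in \mathbb{R}^{n+1}$. For $|t|$ sufficiently small, $\beta \pm t\xi$ is entry-wise positive, so the two distinct measures $d\mu_\pm = \sum_k (\beta_k \pm t\xi_k)\, d\mu_k$ both lie in $\mathfrak{M}_c$ (their moments still equal $c$ because $A\xi = 0$), and $d\mu = \tfrac{1}{2}(d\mu_+ + d\mu_-)$, contradicting extremality.

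There is no real obstacle; the only refinement over the proof of Proposition~\ref{dmusupportn} is making the two perturbed measures explicit so that extremality can be violated directly, rather than invoking a "nontrivial affine family of convex combinations." No tools beyond elementary linear algebra and a measure-theoretic partition of $\supp d\mu$ are needed.
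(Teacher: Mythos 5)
Your proof is correct and follows the same route as the paper: the corollary is just the contradiction argument embedded in the second half of the proof of Proposition~\ref{dmusupportn}, and you reproduce it with the perturbation $d\mu_\pm = \sum_k(\beta_k \pm t\xi_k)\,d\mu_k$ made explicit where the paper merely invokes a ``nontrivial affine set of solutions.'' Your version is a mild elaboration, not a different argument.
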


In several of the most important one-dimensional moment problems there are matrix tests available to check that $c\in\mathfrak{C}_+$. For example, in the trigonometric moment problem, we need to check that the corresponding Toeplitz matrix is positive definite, and, in the Nevanlinna-Pick interpolation, that the Pick matrix is positive definite. There is also a matrix test for the power moment problem in terms of a Hankel matrix \cite{krein1977themarkov}. 

In the multidimensional case, checking that $c\in\mathfrak{C}_+$ is more complicated, but the following result might provide some help. 

\begin{prop}\label{Cplustestprop}
Let $c_0\in\mathfrak{C}_+$ be arbitrary.%
\footnote{For example $c_0=\int_K \alpha dx\in \mathfrak{C}_+$ as in the proof of Proposition \ref{prop:C+notempty}.} Then the optimization problem
\begin{equation}
\label{Cplusoptimization}
\min_{p\in\bar{\mathfrak{P}}_+} \langle c,p\rangle\quad \text{subject to $\langle c_0,p\rangle=1$}
\end{equation}
has a solution with minimal value $V$. Moreover, {\rm (i)}  $V>0$ if and only if $c\in\mathfrak{C}_+$, {\rm (ii)}  $V=0$ if and only if $c\in\partial\mathfrak{C}_+$, and {\rm (iii)}  $V<0$ if and only if $c\not\in\bar{\mathfrak{C}}_+$.
\end{prop}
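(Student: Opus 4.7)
The plan is to establish existence of the minimizer by a compactness argument that uses the hypothesis $c_0 \in \mathfrak{C}_+$ in an essential way, and then to read off the three equivalences by rewriting the minimum as a ratio over the whole cone $\bar{\mathfrak{P}}_+ \setminus \{0\}$.

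First I would show that the feasible set
\[
F := \{ p \in \bar{\mathfrak{P}}_+ : \langle c_0, p \rangle = 1 \}
\]
is compact, so that the continuous linear function $p \mapsto \langle c, p \rangle$ attains its infimum $V$ on $F$. Closedness is immediate since $F$ is the intersection of the closed cone $\bar{\mathfrak{P}}_+$ with an affine hyperplane. For boundedness I exploit $c_0 \in \mathfrak{C}_+$: by the very definition of $\mathfrak{C}_+$, the continuous map $p \mapsto \langle c_0, p \rangle$ is strictly positive on the compact set $\bar{\mathfrak{P}}_+ \cap \{\|p\| = 1\}$, hence attains a positive minimum $\gamma > 0$ there, and homogeneity gives $\|p\| \le 1/\gamma$ for every $p \in F$.

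Next, since every $p \in \bar{\mathfrak{P}}_+ \setminus \{0\}$ satisfies $\langle c_0, p \rangle > 0$ and rescales into $F$ via $p \mapsto p/\langle c_0, p \rangle$, I would record the reformulation
\[
V = \inf_{p \in \bar{\mathfrak{P}}_+ \setminus \{0\}} \frac{\langle c, p \rangle}{\langle c_0, p \rangle}.
\]
The three cases then follow directly from the definition of $\mathfrak{C}_+$ together with the standard dual characterization $\bar{\mathfrak{C}}_+ = \{c : \langle c, p \rangle \ge 0 \text{ for all } p \in \bar{\mathfrak{P}}_+\}$ (a bipolar statement for closed convex cones, already implicit in Proposition~\ref{prop:C+notempty} and Proposition~\ref{mu2cprop}). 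Concretely: $V < 0$ iff some $p \in \bar{\mathfrak{P}}_+$ makes $\langle c, p \rangle$ negative, iff $c \notin \bar{\mathfrak{C}}_+$; $V > 0$ iff the attained minimum is strictly positive, which by scaling is equivalent to $\langle c, p \rangle > 0$ on all of $\bar{\mathfrak{P}}_+ \setminus \{0\}$, i.e.\ $c \in \mathfrak{C}_+$; and $V = 0$ covers the remaining case $c \in \bar{\mathfrak{C}}_+ \setminus \mathfrak{C}_+ = \partial\mathfrak{C}_+$.

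The main obstacle is the compactness of $F$, which is truly where the hypothesis $c_0 \in \mathfrak{C}_+$ (as opposed to merely $c_0 \in \bar{\mathfrak{C}}_+$) is used: without strict positivity of $\langle c_0, \cdot \rangle$ away from the origin, the feasible set could fail to be bounded and the infimum could fail to be attained, which would in particular break the implication ``$c \in \mathfrak{C}_+ \Rightarrow V > 0$'' in (i). Everything else is formal duality between $\bar{\mathfrak{P}}_+$ and $\bar{\mathfrak{C}}_+$.
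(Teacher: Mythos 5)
Your proof is correct and follows essentially the same route as the paper: the paper also relies on the observation that $c_0\in\mathfrak{C}_+$ forces the hyperplane $\{\langle c_0,p\rangle=1\}$ to have compact intersection with $\bar{\mathfrak{P}}_+$, and then reads the three cases off the definition of $\mathfrak{C}_+$. Your write-up simply spells out the boundedness estimate $\|p\|\le 1/\gamma$ and the ratio reformulation in more detail than the paper's terse statement.
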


\begin{proof}
The constraint $\langle c_0,p\rangle=1$ is a hyperplane that does not pass through the vertex of the cone $\bar{\mathfrak{P}}_+$ where $p=0$. Since, in addition, $c_0\in\mathfrak{C}_+$, the hyperplane has a compact intersection with $\bar{\mathfrak{P}}_+$, and therefore the linear functional $\langle c,p\rangle$ has always a minimum there. Then the rest of the proposition follows directly from the definition \eqref{Cplus}. 
\end{proof}

As a preamble to the solution of the fundamental optimization problem of Section~\ref{sec:boundary}, it is instructive to consider the dual problem of \eqref{Cplusoptimization}. Differentiating the Lagrangian
\begin{displaymath}
L(p;d\mu,\lambda)=\langle c,p\rangle - \int_K Pd\mu -\lambda(\langle c_0,p\rangle -1),
\end{displaymath}
where the nonnegative bounded measure\footnote{The dual space of $C(K)$, denoted by $C(K)^*$, is the space of bounded signed measures \cite{Luenberger}.} $d\mu\in C(K)^*$ and $\lambda$ are Lagrange multipliers, we obtain
\begin{equation}
\label{Lstationarity}
c - \hat{c}-\lambda c_0 =0\quad \text{where $\hat{c}:=\int_K \alpha d\mu$}, 
\end{equation}
and the complementary slackness condition 
\begin{equation}
\label{compslack}
\langle \hat{c},p\rangle=\int_K Pd\mu =0, 
\end{equation}
which implies that $\hat{c}\in\partial\mathfrak{C}_+$. 
Since $P\geq 0$ on  $K$ and $d\mu$ is a nonnegative measure, \eqref{compslack} also implies that $d\mu$ is either identically zero or a singular measure with support in the set of zeros of $P$. Inserting the stationarity condition \eqref{Lstationarity} in the Lagrangian we get the dual functional \cite{Luenberger}
\begin{displaymath}
\begin{split}
\varphi(d\mu,\lambda)=\langle \hat{c},p\rangle +\lambda\langle c_0,p\rangle- \int_K Pd\mu -\lambda\langle c_0,p\rangle +\lambda =\lambda .
\end{split}
\end{displaymath}
 Consequently, the dual problem is 
\begin{equation}
\label{dual}
\max_{\lambda\in\mathbb{R}}\, \lambda \quad \text{subject to $c-\lambda c_0\in\partial\mathfrak{C}_+$}
\end{equation}
with optimal value $\hat{\lambda}$.
Since these are convex optimization problems and $\mathfrak{P}_+\ne\emptyset$, there is no duality gap \cite{Luenberger}, and therefore $\hat{\lambda}=V$. From this we can construct an alternative proof of Proposition~\ref{Cplustestprop}. In fact, since $\hat c\in\partial\mathfrak{C}_+$, it follows from \eqref{Lstationarity} that $c\in\partial\mathfrak{C}_+$ is equivalent to $\lambda c_0\in\partial\mathfrak{C}_+$, which in turn holds if and only if $\lambda=0$, since $c_0\in\mathfrak{C}_+$.  Increasing $\lambda$ through positive values brings $c=\hat{c}+\lambda c_0$ into $\mathfrak{C}_+$, whereas negative values of $\lambda$ brings $c=\hat{c}+\lambda c_0$ outside $\bar{\mathfrak{C}}_+$. 
This could be compared with the test procedure suggested in \cite[p. 3.2.4]{lang1981spectral}. 

Using a homotopy approach, it was shown in \cite{georgiou2005solution} that a certain differential equation has an exponentially attractive point if and only if $c\in \mathfrak{C}_+$. Such a procedure might be preferable from a computational point of view. 


\section{Solutions with rational positive measure}\label{sec:rational measure}

We begin by considering {\em rational positive measures\/} of the type
\begin{equation}
\label{rational measures}
d\mu =\frac{P}{Q}dx, \quad p, q \in\mathfrak{P}_+  
\end{equation} 
and define the moment map
\begin{equation}
\label{momentmap}
f^p(q)=\int_K \alpha\frac{P}{Q}dx . 
\end{equation}

The following condition is instrumental in ensuring the existence of an interpolating measure of the form \eqref{rational measures} for each given $p\in \mathfrak{P}_+$, which, as we will see, leads to a complete and smooth parameterization in terms of rational measures. 
\begin{cond}\label{condition2}
The cone\/ $\mathfrak{P}_+$ is nonempty and has the property
\begin{equation}
\label{boundaryinf}
\int_K \frac{1}{Q}dx =\infty \quad\text{for all $q\in\partial\mathfrak{P}_+$}.
\end{equation}
\end{cond}

The following modification of Theorem~\ref{c2muthm} is a simple corollary of Theorem~\ref{diffeomorphismthm} below, but we state it already here to motivate our interest in rational measures \eqref{rational measures}. 

\begin{prop}
Suppose that Condition~\ref{condition2} holds. Then there is a $d\mu\in{\mathfrak M}_c$ of the form \eqref{rational measures} satisfying \eqref{vectormoment} if and only if  $c\in\mathfrak{C}_+$. 
\end{prop}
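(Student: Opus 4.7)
The plan is to verify the two implications separately, with the nontrivial direction being a direct appeal to Theorem~\ref{diffeomorphismthm}.

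For the necessity direction, suppose $d\mu = (P/Q)dx$ with $p,q\in\mathfrak{P}_+$ satisfies \eqref{vectormoment}. Since $\mathfrak{P}_+$ is open, $P$ and $Q$ are both strictly positive on the compact set $K$, so $P/Q$ is strictly positive and bounded there. I would then compute, for an arbitrary $p'\in\bar{\mathfrak{P}}_+\setminus\{0\}$ with corresponding generalized polynomial $P'$,
\begin{equation*}
\langle c, p'\rangle = \int_K P'(x)\frac{P(x)}{Q(x)}\,dx.
\end{equation*}
The blanket assumption that the zero locus of $P'$ has Lebesgue measure zero (combined with continuity and $P'\not\equiv 0$) forces the integrand to be strictly positive on a set of positive measure, while being nonnegative everywhere. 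Hence $\langle c,p'\rangle>0$ for every nonzero $p'\in\bar{\mathfrak{P}}_+$, which by the definition \eqref{Cplus} places $c$ in the open dual cone $\mathfrak{C}_+$.

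For the sufficiency direction, I would simply invoke Theorem~\ref{diffeomorphismthm}, which asserts (in its anticipated form) that for each fixed $p\in\mathfrak{P}_+$ the moment map $f^p\colon\mathfrak{P}_+\to\mathfrak{C}_+$ defined in \eqref{momentmap} is a diffeomorphism, and in particular surjective. By Condition~\ref{condition2}, $\mathfrak{P}_+\neq\emptyset$, so we may fix any such $p$; then given $c\in\mathfrak{C}_+$, there exists $q\in\mathfrak{P}_+$ with $f^p(q)=c$. The associated measure $d\mu=(P/Q)dx$ is nonnegative, bounded on $K$, and satisfies \eqref{vectormoment}, hence lies in $\mathfrak{M}_c$ and has the required form \eqref{rational measures}.

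The main obstacle is entirely concentrated in the sufficiency direction, and it has been black-boxed into Theorem~\ref{diffeomorphismthm}: establishing that $f^p$ really does map $\mathfrak{P}_+$ \emph{onto} $\mathfrak{C}_+$ requires both a properness argument (using the integrability hypothesis \eqref{boundaryinf} of Condition~\ref{condition2} to prevent $q$ from escaping to $\partial\mathfrak{P}_+$ as $f^p(q)$ approaches the interior of $\mathfrak{C}_+$) and a degree/diffeomorphism argument to pass from local invertibility to global surjectivity. Given that theorem, however, the proposition is immediate, which is why we chose to state it here as motivation and defer the genuine work to the next section.
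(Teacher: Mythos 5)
Your proof is correct and follows essentially the same route the paper takes: sufficiency is deferred to Theorem~\ref{diffeomorphismthm} (the paper phrases this as an application of Corollary~\ref{rationalmomentscor}, which is itself an immediate restatement of that theorem), and necessity is the routine observation that $P/Q$ is bounded below by a positive constant on $K$ so that $\langle c,p'\rangle=\int_K P'(P/Q)\,dx>0$ for every nonzero $p'\in\bar{\mathfrak{P}}_+$ --- a fact the paper leaves implicit since Theorem~\ref{diffeomorphismthm} already asserts $f^p$ has codomain $\mathfrak{C}_+$. The only minor inefficiency is your appeal to the measure-zero-zero-locus blanket assumption in the necessity step; continuity of $P'$ together with $P'\not\equiv 0$ already gives positivity of $P'$ on an open set, so that hypothesis is not needed there.
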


This is immediate by taking any $p\in\mathfrak{P}_+$ in Corollary~\ref{rationalmomentscor}. 

\begin{lem}\label{properlem}
Suppose that Condition~\ref{condition2} holds and that $p\in\mathfrak{P}_+$. Then the map $f^p:\mathfrak{P}_+\to\mathfrak{C}_+$ is proper, i.e., the inverse image $(f^p)^{-1}(C)$ is compact for every compact $C\subset \mathfrak{C}_+$.
\end{lem}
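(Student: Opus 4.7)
The plan is to establish sequential compactness of $(f^p)^{-1}(C)$: given any sequence $(q_n)\subset (f^p)^{-1}(C)$, I would extract a subsequence converging in $\mathfrak{P}_+$ to a point whose image under $f^p$ lies in $C$. Writing $c_n:=f^p(q_n)$ and using compactness of $C\subset\mathfrak{C}_+$, we may pass to a subsequence along which $c_n\to c_*\in C$. The workhorse is the elementary identity
\begin{equation*}
\langle c_n,q_n\rangle \;=\; \int_K Q_n\cdot\frac{P}{Q_n}\,dx \;=\; \int_K P\,dx \;=:\; M>0,
\end{equation*}
which is valid for every $n$ since $p\in\mathfrak{P}_+$.

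First I would rule out $\|q_n\|\to\infty$. Setting $\hat q_n:=q_n/\|q_n\|$, the identity above gives $\langle c_n,\hat q_n\rangle = M/\|q_n\|\to 0$. Compactness of the unit sphere allows a further subsequence with $\hat q_n\to\hat q\in\bar{\mathfrak{P}}_+$, $\|\hat q\|=1$, so passing to the limit yields $\langle c_*,\hat q\rangle = 0$. Since $c_*\in\mathfrak{C}_+$ and $\hat q\in\bar{\mathfrak{P}}_+\setminus\{0\}$, this directly contradicts the defining property of $\mathfrak{C}_+$ in \eqref{Cplus}.

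Thus $\|q_n\|$ is bounded and, after passing to a further subsequence, $q_n\to q_*\in\bar{\mathfrak{P}}_+$. The remaining---and in my view the most delicate---step is to exclude $q_*\in\partial\mathfrak{P}_+$ (including the possibility $q_*=0$), and this is exactly where Condition~\ref{condition2} is designed to be used. Fix any $\tilde p\in\mathfrak{P}_+$; by positivity and compactness of $K$ there is $\delta>0$ with $\tilde P(x)P(x)\geq\delta$ on $K$, so
\begin{equation*}
\langle c_n,\tilde p\rangle \;=\; \int_K \tilde P\cdot\frac{P}{Q_n}\,dx \;\geq\; \delta\int_K \frac{1}{Q_n}\,dx.
\end{equation*}
Since the coefficient vectors converge, $Q_n\to Q_*$ pointwise on $K$, so Fatou's lemma combined with Condition~\ref{condition2} yields $\liminf_n \int_K 1/Q_n\,dx\geq \int_K 1/Q_*\,dx = \infty$, while the left-hand side converges to $\langle c_*,\tilde p\rangle<\infty$---a contradiction. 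Hence $q_*\in\mathfrak{P}_+$, and continuity of $f^p$ on $\mathfrak{P}_+$ (which follows by dominated convergence, since $Q_n\to Q_*$ uniformly and is eventually bounded below by a positive constant) gives $f^p(q_*)=c_*\in C$, completing the argument.
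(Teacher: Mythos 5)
Your proof is correct and takes essentially the same route as the paper's: you use the identity $\langle f^p(q),q\rangle=\int_K P\,dx$ to bound $\|q_n\|$ (the paper does this via a uniform lower bound $\langle c,q\rangle\geq\varepsilon\|q\|_\infty$ over the compact set $C\times\{\|q\|_\infty=1\}$ rather than by contradiction after normalization, but the content is identical), and you exclude boundary cluster points via Condition~\ref{condition2} together with Fatou applied to $\int_K 1/Q_n\,dx$ (the paper tests with $\tilde p=p$, obtaining $\int_K P^2/Q$, where you allow any $\tilde p\in\mathfrak{P}_+$, but the mechanism is the same). If anything, your explicit invocation of Fatou in the boundary step makes a point the paper leaves somewhat terse.
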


\begin{proof}
We first prove that $(f^p)^{-1}(C)$ is bounded. To this end, first note that the set $\{(c,q)\mid c\in C, q\in\bar{\mathfrak{P}}_+, \|q\|_\infty =1\}$ is compact, and hence the bilinear form $\langle c,q\rangle$ has a minimum $\varepsilon$ there, where $\varepsilon >0$ since $C\subset \mathfrak{C}_+$.  Hence $\langle c,q\rangle\geq \varepsilon \|q\|_\infty$. However, 
\begin{displaymath}
\langle f^p(q),q\rangle =\int_K Pdx =:\kappa
\end{displaymath}
is constant. Consequently $\|q\|_\infty\leq \kappa/\varepsilon$ for any $q\in(f^p)^{-1}(C)$, proving boundedness.  If $(f^p)^{-1}(C)$ is empty or finite, it is trivially compact, so let us assume that it contains infinitely many points. Let  $(q^{k})$ be any sequence in $\mathfrak{P}_+$ such that $f^p(q^{k})\in C$ for all $k$. Since $(f^p)^{-1}(C)$ is bounded, $(q^{k})$ has a cluster point $\hat q$ in the closure $\bar{\mathfrak{P}}_+$. Compactness of  $(f^p)^{-1}(C)$ now follows from the fact that $\hat q\not\in\partial\mathfrak{P}_+$. In fact, if $\hat q\in\partial\mathfrak{P}_+$ were the case, we would have
\begin{displaymath}
  \langle f^p(\hat q),p\rangle =\int_K\frac{P^2}{Q}dx =\infty 
\end{displaymath}
by  the assumptions $p\in\mathfrak{P}_+$ and Condition~\ref{condition2}, which contradicts the fact that $\sup_{k}\langle f^p(q^{k}),p\rangle\le\sup_{c\in C}\langle c,p\rangle$ is finite. 
\end{proof}

\begin{thm}\label{diffeomorphismthm}
Suppose that Condition~\ref{condition2} holds and that $p\in\mathfrak{P}_+$.  Then the map $f^p:\mathfrak{P}_+\to\mathfrak{C}_+$ is a diffeomorphism between $\mathfrak{P}_+$ and $\mathfrak{C}_+$.  
\end{thm}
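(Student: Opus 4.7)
The plan is to verify the hypotheses of the Hadamard global inverse function theorem (equivalently, that a proper local diffeomorphism into a connected, simply connected manifold is a diffeomorphism). The three ingredients needed are: smoothness with nonsingular derivative (local diffeomorphism), properness (already provided by Lemma~\ref{properlem}), and the topology of the target, together with the verification that $f^p$ really maps into $\mathfrak{C}_+$.

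First I would show that $f^p$ is smooth with negative-definite Jacobian. Since $Q>0$ on the compact set $K$, we may differentiate under the integral to obtain
\begin{equation*}
\frac{\partial f^p_i}{\partial q_j}(q)=-\int_K \alpha_i\alpha_j\,\frac{P}{Q^2}\,dx,
\end{equation*}
so the Jacobian is $-\int_K \alpha\alpha'(P/Q^2)\,dx$. For any nonzero $v\in\mathbb{R}^n$, $\alpha'v$ is a nontrivial generalized polynomial and hence nonzero on a set of positive measure (by continuity and linear independence), so $v'J(q)v=-\int_K(\alpha'v)^2(P/Q^2)\,dx<0$ because $P/Q^2>0$ on $K\setminus\{P=0\}$, which has full measure. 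By the inverse function theorem $f^p$ is therefore a local $C^\infty$ diffeomorphism.

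Next I would check that $f^p$ maps into $\mathfrak{C}_+$. For any $\tilde p\in\bar{\mathfrak{P}}_+\setminus\{0\}$, write $\tilde P=\sum_k \tilde p_k\alpha_k\geq 0$; since its zero set has measure zero and $P/Q>0$ on $K\setminus\{P=0\}$, we have
\begin{equation*}
\langle f^p(q),\tilde p\rangle=\int_K \tilde P\,\frac{P}{Q}\,dx>0,
\end{equation*}
so $f^p(q)\in\mathfrak{C}_+$ by the definition \eqref{Cplus}.

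Finally, combining properness (Lemma~\ref{properlem}) with the fact that $f^p$ is a local diffeomorphism, $f^p:\mathfrak{P}_+\to\mathfrak{C}_+$ is a covering map; since $\mathfrak{C}_+$ is an open convex cone, it is connected and simply connected, so any covering of it is trivial. As $\mathfrak{P}_+$ is also connected, $f^p$ must be a one-sheeted covering, i.e., a global diffeomorphism onto $\mathfrak{C}_+$. The main obstacle I anticipate is precisely this last passage from local to global injectivity/surjectivity: properness and nonsingular derivative guarantee the covering property, but one must invoke simple connectedness of the target cone $\mathfrak{C}_+$ to conclude that the cover is trivial; this is the standard Hadamard-type argument used throughout the rational moment literature.
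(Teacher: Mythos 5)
Your proof is correct and follows essentially the same route as the paper: establish that the Jacobian is negative definite on $\mathfrak{P}_+$, invoke properness from Lemma~\ref{properlem}, and conclude via Hadamard's global inverse function theorem. You merely unpack Hadamard's theorem into its covering-map mechanism and add an explicit check that $f^p$ maps into $\mathfrak{C}_+$, which the paper leaves implicit.
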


\begin{proof}
Since $p\in\mathfrak{P}_+$, the Jacobian
\begin{equation}
\label{Jacobian}
\frac{\partial f^p}{\partial q} = -\int_K \alpha\frac{P}{Q^2}\alpha' dx < 0 
\end{equation}
on all of $\mathfrak{P}_+$. In fact, for $a\in\mathbb{R}^n$, the quadratic form
\begin{displaymath}
a'\frac{\partial f^p}{\partial q} a= -\int_K (a'\alpha)^2\frac{P}{Q^2}dx =0
\end{displaymath}
if and only if $a=0$. Since $f^p$ is also proper (Lemma~\ref{properlem}), it follows from Hadamard's global inverse function theorem \cite{Hadamard} that the map $f^p$ is a diffeomorphism.
\end{proof}

\begin{cor}\label{rationalmomentscor}
Suppose that Condition~\ref{condition2} holds. Then the moment equations
\begin{equation}\label{rationalmoments}
c_k = \int_K \alpha_k  \frac{P}{Q}dx , \quad k=0,1,\dots, n
\end{equation}
have a unique solution $q\in\mathfrak{P}_+$ for each $(c,p)\in\mathfrak{C}_+\times\mathfrak{P}_+$.
\end{cor}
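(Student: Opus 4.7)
The plan is essentially to recognize that this corollary is an immediate restatement of Theorem~\ref{diffeomorphismthm} in terms of the scalar moment equations rather than the vector-valued moment map. First I would observe that, by definition of $f^p$ in \eqref{momentmap}, the system \eqref{rationalmoments} is nothing other than the vector equation $f^p(q)=c$. Under Condition~\ref{condition2}, Theorem~\ref{diffeomorphismthm} ensures that for each fixed $p\in\mathfrak{P}_+$ the moment map $f^p:\mathfrak{P}_+\to\mathfrak{C}_+$ is a diffeomorphism, and in particular a bijection.

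Consequently, for each $c\in\mathfrak{C}_+$ there exists a unique $q\in\mathfrak{P}_+$ with $f^p(q)=c$, which is precisely the conclusion sought. Since $p\in\mathfrak{P}_+$ was arbitrary, this holds for every pair $(c,p)\in\mathfrak{C}_+\times\mathfrak{P}_+$, completing the proof.

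There is no genuine obstacle here: the corollary is purely a reformulation, and the real content of the result (existence, uniqueness, and smooth dependence) has already been absorbed into Theorem~\ref{diffeomorphismthm} via the properness lemma (Lemma~\ref{properlem}), the negative-definiteness of the Jacobian \eqref{Jacobian}, and Hadamard's global inverse function theorem. The only minor point worth mentioning explicitly in the proof is that the indexing $k=0,1,\dots,n$ appearing in \eqref{rationalmoments} is a typographical convention matching the vector form $c=f^p(q)$ used throughout Section~\ref{sec:rational measure}, so no reconciliation of index ranges is needed.
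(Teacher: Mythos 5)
Your proof is correct and matches the paper's intent exactly: the corollary is stated immediately after Theorem~\ref{diffeomorphismthm} with no separate argument, precisely because the moment equations \eqref{rationalmoments} are the componentwise form of $f^p(q)=c$, and bijectivity of $f^p:\mathfrak{P}_+\to\mathfrak{C}_+$ gives existence and uniqueness directly. Nothing further is needed.
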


\begin{rem}\label{condition2rem}
As demonstrated in \cite{BLkrein}, Condition \ref{condition2} holds in the one-dimensional case if the basis functions $(\alpha_1,\alpha_2,\dots,\alpha_n)$ are Lipschitz continuous. In fact, if $q\in\partial\mathfrak{P}_+$, there is an $x_0\in K$ such that  $Q(x_0)=0$. Therefore, if $Q$ is Lipschitz continuous, there exists an $\varepsilon >0$ and a $\kappa >0$ such that $Q(x)\leq \kappa |x-x_0|$ for all $x\in [x_0-\varepsilon,x_0+\varepsilon]$, and hence 
\begin{displaymath}
\int_K\frac{dx}{Q}\geq \int_{x_0-\varepsilon}^{x_0+\varepsilon}\frac{dx}{Q} \geq \frac{1}{\kappa}\int_{x_0-\varepsilon}^{x_0+\varepsilon}\frac{dx}{|x-x_0|} =\infty.
\end{displaymath}
This is a very mild condition, since any reasonable moment problem encountered in applications would have Lipschitz continuous basis functions. Also, if $(\alpha_1,\alpha_2,\dots,\alpha_n)$ is a Chebyshev system (or T-system)%
\footnote{A set of real functions $(\alpha_1,\alpha_2,\dots,\alpha_n)$ on an interval $[a,b]$ is called a Chebyshev system if any nonzero linear combination $P(x) = \sum_{k = 1}^n p_k \alpha_k(x)$ has at most $n$ zeros \cite[p. 31]{krein1977themarkov}.}
 and contains a constant function, then after a reparameterization the basis functions will be Lipschitz continuous \cite[p. 37]{krein1977themarkov}.
\end{rem}

In the multidimensional case, the situation is a bit trickier. As was noted in \cite[p. 819]{georgiou2005solution}, Condition~\ref{condition2} always holds if $K$ is an interval in ${\mathbb R}^2$ and the basis functions $(\alpha_k)$ are twice differentiable and doubly periodic. 
However, as the following example from  \cite{lang1982multidimensional} illustrates, this does not hold for $d\ge 3$.

\begin{ex}
 Let $K:=[-\pi,\pi]^d$ and $\alpha_k(x)=\cos x_k$, $k=1,2,\dots,d$, and set
\begin{displaymath}
Q(x)=\sum_{k=1}^d (1-\cos x_k),
\end{displaymath}
which corresponds to a $q\in\partial\mathfrak{P}_+$ since $Q(0)=0$. However, this is the only zero of $Q$, and hence, in checking Condition~\ref{condition2}, we need only consider a small neighborhood $D_\varepsilon=\{ x\in K\mid \|x\|\leq \varepsilon\}$ of $x=0$.  A series expansion shows that for sufficiently small $\varepsilon >0$ we have $1-\cos x_k\geq x_k^2/4$ on all of $D_\varepsilon$, and hence 
\begin{displaymath}
\int_{D_\varepsilon}\frac{dx}{Q}\leq \int_{D_\varepsilon}\frac{dx}{\sum_{k=1}^d x_k^2/4} .
\end{displaymath}
Changing to spherical coordinates, this becomes
\begin{displaymath}
\int_{D_\varepsilon}\frac{dx}{Q}\leq \int_{\varphi_{d-1}=0}^{2\pi} \int_{\mathbf{\varphi}=0}^\pi \int_{r=0}^\varepsilon \frac{4}{r^2}r^{d-1}\sin\mathbf{\varphi}\,dr\, d\mathbf{\varphi}\,d\mathbf{\varphi}_{d-1},
\end{displaymath}
where $\mathbf{\varphi}:=(\varphi_1,\dots,\varphi_{d-2})$ and $\sin\mathbf{\varphi}:=\sin^{d-2}\varphi_1\cdots\sin\varphi_{d-2}$. This is clearly finite for $d\geq 3$, and hence Condition~\ref{condition2} does not hold in these cases. 
\end{ex}

\begin{ex}\label{nonLipschitzex}
Next consider a one-dimensional case where $\alpha$ is not Lipschitz continuous. Let $\alpha(x)=(1,x^{1/3},x^{2/3})$ and $K=[-1,1]$. Then $q=(0,0,3)\in\partial\mathfrak{P}_+$. In fact, $Q(x)=3x^{2/3}$, and hence $Q(0)=0$. However, 
\begin{equation}
\label{finiteintegral}
\int_{-1}^1\frac{dx}{Q(x)} =\int_{-1}^1\frac13 x^{-2/3}dx =2<\infty,
\end{equation}
so Condition~\ref{condition2} is not satisfied. In this case, $f^p:\mathfrak{P}_+\to\mathfrak{C}_+$ is not a diffeomorphism, so we have a counterexample to the statement of Theorem~\ref{diffeomorphismthm} with Condition~\ref{condition2} removed. In fact, $f^p$ is not even continuous in all points. Take $p:=(1,0,0)\in\mathfrak{P}_+$ and a sequence $q_k:=3(k^{-2},2k^{-1},1)\in\mathfrak{P}_+$, which tends to $q_\infty:=(0,0,3)$ as $k\to\infty$.  Then $P=1$ and $Q_k(x)=3(x^{1/3}+k^{-1})^2$, and hence $P/Q_k\to P/Q_\infty=\frac13 x^{-2/3}$ for all $x\ne 0$. However, by \eqref{finiteintegral}, $f_1^p(q_\infty)=\int_{-1}^1P/Q_\infty dx =2$, whereas 
\begin{displaymath}
f_1^p(q_k)=\int_{-1}^1\frac{P}{Q_k}dx =\frac13\int_{-1}^1(x^{1/3}+k^{-1})^{-2}dx =\int_{-1}^1\frac{x^2}{(x^2+k^{-1})^{-2}}dx  =\infty
\end{displaymath}
for all $k\geq 1$.  Consequently, $f^p$ is not  continuous in $q=(0,0,3)$. Moreover, we observe that the substitution $y=x^{1/3}$ transforms the basis to $\alpha(x)=(1,x,x^2)$, which is Lipschitz continuous. However, now $P(x)=x^2$, and hence $p\in\partial\mathfrak{P}_+$, so this is not a counterexample to Theorem~\ref{diffeomorphismthm}. We may even modify this example so that $f_1^p(q_k)<\infty$ for all finite $k$. To this end, choose $q_k:=3(k^{-2}+k^{-4},2k^{-1},1)$, which again tends to $q_\infty:=(0,0,3)$. However, it can now be shown that 
\begin{displaymath}
f_1^p(q_k)=\int_{-1}^1\frac{P}{Q_k}dx\to 2+\pi >2 =f_1^p(q_\infty)
\end{displaymath}
as $k\to\infty$, which again shows that $f^p$ is not continuous. 
\end{ex}


\section{The optimization problem}\label{sec:optimization}

Next, given the moment map \eqref{momentmap}, following \cite{BLkimura,byrnes2006thegeneralized} we construct the
$1$-form 
\begin{displaymath}
\begin{split}
\omega =\langle c-f^p(q),dq\rangle &= \sum_{k=1}^n c_kdq_k
-\int_K\sum_{k=1}^n\alpha_k\frac{P}{Q}dq_kdx \\
&=\langle c,dq\rangle -\int_K\frac{P}{Q}dQdx
\end{split}
\end{displaymath}
on $\mathfrak{P}_+$.  Taking the exterior derivative (on $\mathfrak{P}_+$) we obtain
\[
d\omega=\int_K\frac{P}{Q^2}dQ\wedge dQ dx=0,
\]
i.e., the $1$-form $\omega$ is closed.  Therefore, since $\mathfrak{P}_+$ is an open convex set and hence star-shaped in any point,  $\omega$ is exact by the Poincar{\'e} Lemma \cite[pp. 92-94]{spivak}. This means that  there exists  a smooth function $\mathbb{J}_p^c$ on $\mathfrak{P}_+$ such that 
\[
\mathbb{J}_p^c(q_1)-\mathbb{J}_p^c(q_0)=\int_{q_0}^{q_1} \omega= \int_{q_0}^{q_1} \langle c,dq\rangle -\int_{q_0}^{q_1}\int_K\frac{P}{Q}dQdx ,
\]
with the integral being independent of the path between two endpoints. Computing the path integral, one finds  that 
\begin{equation}\label{eq:objFun}
\mathbb{J}^c_p(q) = \langle c, q \rangle - \int_{K} P \log Q dx  
\end{equation}
modulo a constant of integration. Then, for each $(c,p) \in   \mathfrak{C}_+\times (\bar{\mathfrak{P}}_+ \setminus \{0\})$ we extend the functional \eqref{eq:objFun} to a map $\bar{\mathfrak{P}}_+\to\mathbb{R}\cup\{\infty\}$. 

\begin{lem}\label{striclyconvexlem}
Let $(c,p) \in  \mathfrak{C}_+\times(\bar{\mathfrak{P}}_+ \setminus \{0\})$. Then the functional $\mathbb{J}^c_p:\,\bar{\mathfrak P}_+\to\mathbb{R}$ is strictly convex. 
\end{lem}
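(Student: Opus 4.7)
The plan is to separate $\mathbb{J}^c_p(q) = \langle c,q\rangle - \int_K P\log Q\,dx$ into the affine term $\langle c,q\rangle$, which is convex but not strictly so, and the integral term $g(q):= -\int_K P\log Q\,dx$, which must carry all the strictness. First I would observe that for each $x\in K$ with $P(x)>0$, the map $q\mapsto -P(x)\log Q(x;q)$ is convex, since $Q(x;q)=\sum_k q_k\alpha_k(x)$ is affine in $q$ and $-\log$ is strictly convex on $(0,\infty)$. Integrating this pointwise inequality gives convexity of $g$ on its effective domain $\{q\in\bar{\mathfrak P}_+: g(q)<\infty\}$, and hence of $\mathbb{J}^c_p$ interpreted as a map into $\mathbb R\cup\{\infty\}$.

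To upgrade convexity to strict convexity, I would fix distinct $q_0,q_1\in\bar{\mathfrak P}_+$ with $g(q_0),g(q_1)<\infty$ and $\lambda\in(0,1)$. The key step is to verify that the set
\[ S:=\{x\in K: P(x)>0,\; Q_0(x)>0,\; Q_1(x)>0,\; Q_0(x)\ne Q_1(x)\} \]
has strictly positive Lebesgue measure. Since $q_0\ne q_1$, linear independence of the $\alpha_k$ yields $Q_0-Q_1\not\equiv 0$; continuity of this difference, together with the assumption that $K$ is the closure of its $d$-dimensional interior, makes $\{Q_0\ne Q_1\}$ a nonempty relatively open subset of $K$ and therefore of positive measure. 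Meanwhile, by the standing assumption in Section~\ref{sec:cones}, the zero loci of $P$, $Q_0$, and $Q_1$ are each Lebesgue negligible (since $p,q_0,q_1\in\bar{\mathfrak P}_+\setminus\{0\}$), so removing them from $\{Q_0\ne Q_1\}$ still leaves a set of positive measure, namely $S$.

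On $S$, strict convexity of $-\log$ on $(0,\infty)$ gives
\[ -\log\bigl(\lambda Q_0(x)+(1-\lambda)Q_1(x)\bigr) < -\lambda\log Q_0(x)-(1-\lambda)\log Q_1(x), \]
and multiplying by $P(x)>0$ and integrating over $K$ produces the strict inequality $g(\lambda q_0+(1-\lambda)q_1) < \lambda g(q_0)+(1-\lambda)g(q_1)$; adding the affine term $\langle c,\cdot\rangle$ preserves it and delivers strict convexity of $\mathbb{J}^c_p$. The main delicacy is the positivity of the measure of $S$, which relies essentially on both the linear independence hypothesis (so that $Q_0\ne Q_1$ somewhere open) and the paper's standing assumption that the zero locus of any generalized polynomial associated with an element of $\bar{\mathfrak P}_+\setminus\{0\}$ has measure zero; without the latter, $P$ or one of the $Q_i$ could vanish on a set large enough to destroy the strict gain produced by $-\log$.
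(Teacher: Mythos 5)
Your proof is correct, and it takes a genuinely different route from the paper's. The paper establishes strict convexity by computing the Hessian $\mathbb{H}(q) = \int_K \alpha (P/Q^2)\alpha'\,dx$ and showing it is positive definite (the same argument that shows the Jacobian of $f^p$ is nonsingular in Theorem~\ref{diffeomorphismthm}, now combined with the measure-zero assumption on $\zeros(P)$). That argument is short but is really a statement about the interior $\mathfrak{P}_+$, where the Hessian is finite; extending it to the closed cone $\bar{\mathfrak{P}}_+$, where $\mathbb{J}^c_p$ may take the value $+\infty$ and is not differentiable, requires an additional (if routine) argument which the paper leaves tacit. Your proof instead works from the definition: you exploit the pointwise strict convexity of $-\log$ on $(0,\infty)$, carefully isolate a positive-measure set $S$ on which $P>0$, $Q_0>0$, $Q_1>0$, and $Q_0\neq Q_1$ simultaneously, and integrate. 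This is more elementary (no differentiability required), applies uniformly on all of $\bar{\mathfrak{P}}_+$, and makes fully explicit which standing hypotheses are doing the work — linear independence of the $\alpha_k$ (so that $Q_0\neq Q_1$ on a nonempty relatively open set), the assumption that $K$ is the closure of its $d$-dimensional interior (so that this open set has positive measure), and the measure-zero assumption on zero loci of generalized polynomials in $\bar{\mathfrak{P}}_+\setminus\{0\}$ (so that trimming away the zero sets of $P$, $Q_0$, $Q_1$ still leaves a set of positive measure). The only cosmetic point to flag is that the codomain in the lemma's statement should really be $\mathbb{R}\cup\{\infty\}$, and your restriction to endpoints in the effective domain $\{q: g(q)<\infty\}$ is exactly the correct convention for strict convexity of an extended-real-valued function.
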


\begin{proof}
First observe that 
\begin{displaymath}
\frac{\partial\mathbb{J}^c_p}{\partial q}= c - f^p(q),
\end{displaymath}
and consequently, in view of \eqref{Jacobian}, the Hessian $\mathbb{H}(q)$ of $\mathbb{J}^c_p$ is given by
\begin{displaymath}
\mathbb{H}(q)=-\frac{\partial f^p}{\partial q}(q) =\int_K \alpha\frac{P}{Q^2}\alpha' dx , 
\end{displaymath}
where the integrand $P/Q^2$ is nonnegative.  Since by assumption the zero locus of $P$ has measure zero,  $P/Q^2$ is zero at most on a subset of $K$ of measure zero, and consequently $\mathbb{H}(q)>0$ for the same reason as in the proof of Theorem~\ref{diffeomorphismthm}. This implies that $\mathbb{J}^c_p$ is strictly convex. 
\end{proof}

\begin{lem}\label{lowersemicontlem}
The map\/ $\mathbb{J}^c_p:\,\bar{\mathfrak P}_+\to\mathbb{R}$ is lower semi-continuous on all of $\bar{\mathfrak P}_+$.
\end{lem}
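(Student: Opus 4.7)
The plan is to split $\mathbb{J}^c_p(q)=\langle c,q\rangle-\int_K P\log Q\,dx$ into the linear term, which is manifestly continuous on $\bar{\mathfrak{P}}_+$, and the remaining integral $I(q):=\int_K P(x)\log Q(x)\,dx\in[-\infty,\infty)$. Lower semi-continuity of $\mathbb{J}^c_p$ therefore reduces to \emph{upper} semi-continuity of $I$ on $\bar{\mathfrak{P}}_+$, and the obvious tool is Fatou's lemma once an integrable upper bound for the integrand is supplied.

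First I would fix a convergent sequence $q^k\to q$ in $\bar{\mathfrak{P}}_+$. Because $\alpha_1,\ldots,\alpha_n$ are continuous on the compact set $K$, the linear map $r\mapsto\sum_k r_k\alpha_k$ turns convergence in $\mathbb{R}^n$ into uniform convergence on $K$; hence $Q^k\to Q$ uniformly and there exists a constant $M>0$ with $0\le Q^k(x)\le M$ for all $k$ and all $x\in K$. In particular $P(x)\log Q^k(x)\le P(x)\log M$, and the right-hand side is bounded on $K$ and therefore integrable.

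Next I would verify the pointwise convergence $P\log Q^k\to P\log Q$ on $K$, in $[-\infty,\infty)$. At each $x$ with $Q(x)>0$, continuity of $\log$ on $(0,\infty)$ gives $\log Q^k(x)\to\log Q(x)$; at each $x$ with $Q(x)=0$, uniform convergence forces $Q^k(x)\to 0$, so $\log Q^k(x)\to-\infty=\log Q(x)$. Multiplying by $P(x)\ge 0$ (with the convention $0\cdot(-\infty)=0$) yields the claimed pointwise convergence. Applying Fatou's lemma to the non-negative sequence $g_k:=P\log M-P\log Q^k$ then gives
\[
\int_K P(\log M-\log Q)\,dx\le\liminf_{k\to\infty}\int_K P(\log M-\log Q^k)\,dx,
\]
which rearranges to $\limsup_{k\to\infty} I(q^k)\le I(q)$, i.e. upper semi-continuity of $I$, and hence lower semi-continuity of $\mathbb{J}^c_p$.

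The one subtle point, which I expect to be the main conceptual (rather than technical) obstacle, is that when $q\in\partial\mathfrak{P}_+$ the function $Q$ may vanish on a set of positive measure where $P$ is strictly positive, so $I(q)$ may equal $-\infty$ and $\mathbb{J}^c_p(q)$ may equal $+\infty$. This causes no difficulty because $\mathbb{J}^c_p$ is to be understood as $\mathbb{R}\cup\{\infty\}$-valued, and the Fatou argument is insensitive to this: the integrands are dominated \emph{above} by the integrable function $P\log M$, so the only allowed divergence is the one that lower semi-continuity permits.
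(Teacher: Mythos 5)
Your proof is correct and follows essentially the same route as the paper: bound $Q$ and all $Q^k$ uniformly by some $M$ using uniform convergence on the compact $K$, observe that $P\log M - P\log Q^k \geq 0$, and apply Fatou's lemma to conclude upper semi-continuity of the integral term (the paper writes the Fatou inequality with the factor $P$ inadvertently dropped, but the argument is the same). Your explicit handling of the pointwise limit at points where $Q(x)=0$, and of the convention $0\cdot(-\infty)=0$, is a welcome clarification of details the paper leaves implicit.
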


\begin{proof}
Let $(q_k)$ be a sequence in $\bar{\mathfrak{P}}_+$ converging to $q\in\bar{\mathfrak{P}}_+$ in $L_\infty$-norm.  Since the functions $(Q_k)$ and $Q$ are continuous on a compact set $K$, the convergence $Q_k\to Q$ as $k\to\infty$ is uniform. Since $Q$ is a continuous function on a compact set, $\max_x Q < \infty$.  Moreover, since the convergence is uniform, we have that $\sup_k\max_x Q_k < \infty$. Hence there is an $M$ such  that $\max_x  Q  \leq M$ and $\sup_k \max_x  Q_k \leq M$, and thus 
\begin{displaymath}
-\log \left( \frac{Q}{M} \right) \geq 0 \quad \text{and}\quad -\log \left( \frac{Q_k}{M} \right) \geq 0,\; k=1,2,\dots.
\end{displaymath}
Therefore, by Fatou's lemma,
\begin{displaymath}
-\int_K \log \left( \frac{Q}{M} \right) dx \leq \liminf_{k \to \infty} -\int_K \log \left( \frac{Q_k}{M} \right) dx
\end{displaymath}
since $Q_k \rightarrow Q$ pointwise. Consequently, $\mathbb{J}^c_p(q)\leq\liminf_{k\to\infty}\mathbb{J}^c_p(q_k)$, proving that $\mathbb{J}^c_p$ is lower semicontinuous. 
\end{proof}

\begin{lem}\label{sublevelsetlem}
The sublevel sets of $\mathbb{J}^c_p$ are compact, i.e., the inverse image $(\mathbb{J}^c_p)^{-1}(\infty,r]$ is compact  for all values of $r$.
\end{lem}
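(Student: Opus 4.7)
The plan is to combine lower semi-continuity (Lemma~\ref{lowersemicontlem}), which makes each sublevel set $S_r:=(\mathbb{J}^c_p)^{-1}(-\infty,r]$ closed in $\bar{\mathfrak{P}}_+$ and hence in $\mathbb{R}^n$, with a coercivity estimate forcing $S_r$ to be bounded. Closedness plus boundedness in $\mathbb{R}^n$ then delivers compactness.

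For coercivity I would estimate the two terms of $\mathbb{J}^c_p(q)=\langle c,q\rangle-\int_K P\log Q\,dx$ separately. Since $c\in\mathfrak{C}_+$, the continuous linear form $q\mapsto\langle c,q\rangle$ attains a strictly positive minimum $\varepsilon>0$ on the compact cross-section $\{q\in\bar{\mathfrak{P}}_+:\|q\|=1\}$, so by homogeneity $\langle c,q\rangle\ge\varepsilon\|q\|$ throughout $\bar{\mathfrak{P}}_+$ (the same device already used in the proof of Lemma~\ref{properlem}). For the integral term, continuity of the basis functions on the compact set $K$ yields a constant $M>0$ such that $Q(x)\le M\|q\|$ for every $x\in K$ and every $q\in\bar{\mathfrak{P}}_+$. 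Multiplying the pointwise inequality $\log Q(x)\le\log M+\log\|q\|$ (valid wherever $Q(x)>0$, i.e. off a null set as long as $q\ne 0$) by $P\ge 0$ and integrating, one obtains
\[
\int_K P\log Q\,dx\;\le\;\kappa\bigl(\log M+\log\|q\|\bigr),
\]
where $\kappa:=\int_K P\,dx$ is a finite constant depending only on $p$. Combining the two estimates,
\[
\mathbb{J}^c_p(q)\;\ge\;\varepsilon\|q\|-\kappa\bigl(\log M+\log\|q\|\bigr),
\]
whose right-hand side tends to $+\infty$ as $\|q\|\to\infty$. This forces $S_r$ to be bounded for every $r$, completing the argument.

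I do not expect any serious obstacle. The only subtle point is the behaviour of $\log Q$ on the zero locus of $Q$, and this is harmless: for $q\in\bar{\mathfrak{P}}_+\setminus\{0\}$ that locus has Lebesgue measure zero by the standing assumption on the $\alpha_k$, so the pointwise upper bound carries over to the integral, while for $q=0$ one has $\mathbb{J}^c_p(0)=+\infty$, which is automatically excluded from every sublevel set.
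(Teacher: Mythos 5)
Your proof is correct and follows essentially the same route as the paper's: lower semi-continuity (Lemma~\ref{lowersemicontlem}) gives closedness, and a linear-versus-logarithmic coercivity bound gives boundedness. The paper factors the coercivity estimate out as a separate lemma (Lemma~\ref{lem:linLog}), phrased in terms of $\|Q\|_\infty$ rather than $\|q\|$, but since the linear map $q\mapsto Q$ is injective on a finite-dimensional space these norms are equivalent, so the two estimates coincide up to constants.
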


\begin{proof}
By Lemma~\ref{lem:linLog} in the appendix we have
\begin{displaymath}
r \geq \mathbb{J}^c_p(q) \geq \epsilon_c \|Q\|_\infty - \epsilon_p \log \|Q\|_\infty ,
\end{displaymath}
and by comparing linear and logarithmic growth we see that the sublevel sets are bounded from both above and below. Since they are sublevel sets of a lower semi-continuous function (Lemma~\ref{lowersemicontlem}),  they are closed. Hence they are compact.
\end{proof}

\begin{thm}\label{uniqueminthm}
Let $(c,p) \in \mathfrak{C}_+\times (\bar{\mathfrak{P}}_+ \setminus \{0\})$. Then the functional \eqref{eq:objFun} has a unique minimum in $\bar{\mathfrak{P}}_+$. Moreover, the map $g^c:\, \bar{\mathfrak{P}}_+ \setminus \{0\}\to  \bar{\mathfrak{P}}_+ $ sending $p$ to the corresponding minimizer $\hat{q}$ is continuous and injective.  
If, in addition, Condition~\ref{condition2} holds and $p\in\mathfrak{P}_+$, then the minimizer $\hat{q}$ belongs to $\mathfrak{P}_+$. 
\end{thm}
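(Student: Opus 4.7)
The plan is to assemble the three preceding lemmas into the standard variational package, and then to treat the assertion $\hat q\in\mathfrak P_+$ separately via a directional-derivative argument. Existence of a minimizer $\hat q\in\bar{\mathfrak P}_+$ follows immediately from Lemmas~\ref{lowersemicontlem} and~\ref{sublevelsetlem}: any minimizing sequence eventually lies in a compact sublevel set, so it admits a cluster point which is a minimizer by lower semicontinuity. Uniqueness then follows from strict convexity (Lemma~\ref{striclyconvexlem}).

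For continuity of $g^c$ at a point $p\in\bar{\mathfrak P}_+\setminus\{0\}$, I would take $p^{(k)}\to p$, set $\hat q^{(k)}:=g^c(p^{(k)})$, and fix any $q_0\in\mathfrak P_+$. The inequality $\mathbb J^c_{p^{(k)}}(\hat q^{(k)})\le \mathbb J^c_{p^{(k)}}(q_0)$ stays uniformly bounded in $k$ since the right-hand side is linear in $p$; the lower estimate behind Lemma~\ref{sublevelsetlem}, with $\epsilon_{p^{(k)}}$ bounded along the convergent sequence, then confines $(\hat q^{(k)})$ to a bounded subset of $\bar{\mathfrak P}_+$. Any subsequential limit $\hat q^\ast$ is a minimizer of $\mathbb J^c_p$ (by joint lower semicontinuity in $q$ together with continuous dependence on $p$), hence equals $g^c(p)$ by uniqueness; since every cluster point coincides with $g^c(p)$, the full sequence converges.

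For injectivity, suppose $g^c(p_1)=g^c(p_2)=\hat q$. In the interior case $\hat q\in\mathfrak P_+$, the first-order condition $c=\int_K\alpha P_i/\hat Q\,dx$ holds for $i=1,2$; subtracting and pairing with $p_1-p_2$ yields $\int_K (P_1-P_2)^2/\hat Q\,dx=0$, whence $P_1\equiv P_2$ and $p_1=p_2$ by linear independence. When $\hat q\in\partial\mathfrak P_+$, the same identity has to be extracted from the one-sided variational inequalities on the convex cone $\bar{\mathfrak P}_+$, with the monotone convergence of the difference quotients $t^{-1}(\log(\hat Q+tV)-\log\hat Q)$ justifying the pairing step even when individual integrals diverge. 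Finally, for the assertion $\hat q\in\mathfrak P_+$ under Condition~\ref{condition2} and $p\in\mathfrak P_+$, suppose for contradiction that $\hat q\in\partial\mathfrak P_+$. Picking $q_0\in\mathfrak P_+$ and setting $v=q_0-\hat q$, we have $P\ge\epsilon>0$ and $Q_0\ge\delta>0$ on $K$ while $\int_K dx/\hat Q=\infty$ by Condition~\ref{condition2}. Consequently the right derivative
\[
\lim_{t\to 0^+}\frac{\mathbb J^c_p(\hat q+tv)-\mathbb J^c_p(\hat q)}{t}=\langle c,v\rangle-\int_K P(Q_0-\hat Q)/\hat Q\,dx
\]
equals $-\infty$, and convexity then forces $\mathbb J^c_p(\hat q+tv)<\mathbb J^c_p(\hat q)$ for small $t>0$, contradicting minimality.

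The main obstacle is the boundary case of injectivity: when $\hat q\in\partial\mathfrak P_+$, the formal gradient $\int_K\alpha P_i/\hat Q\,dx$ can diverge componentwise, so the first-order condition must be read as a variational inequality along admissible directions, and one has to verify that subtracting the two inequalities and pairing with $p_1-p_2$ still leaves the squared-integral identity intact.
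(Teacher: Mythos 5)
Your decomposition matches the paper's: existence from compact sublevel sets (Lemma~\ref{sublevelsetlem}) and lower semicontinuity (Lemma~\ref{lowersemicontlem}), uniqueness from strict convexity (Lemma~\ref{striclyconvexlem}), injectivity from the subtracted first-order conditions paired with $p_1-p_2$, and continuity by a cluster-point argument. The one place where you take a genuinely different route is the final assertion: where the paper simply invokes Theorem~\ref{diffeomorphismthm} to produce a stationary point in $\mathfrak P_+$ that must coincide with $\hat q$, you give a direct variational argument showing the one-sided derivative along $v=q_0-\hat q$ is $-\infty$ when Condition~\ref{condition2} holds and $\hat q\in\partial\mathfrak P_+$. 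That argument is correct (the difference quotient $t^{-1}\int_K P\log(1+t(Q_0/\hat Q-1))\,dx$ converges to $\int_K P(Q_0-\hat Q)/\hat Q\,dx=+\infty$ by monotone convergence on $\{Q_0\ge\hat Q\}$ and dominated convergence on the complement, since $Q_0/\hat Q-1\ge -1$ there), and it is more self-contained than the paper's, since it does not rely on the Hadamard-theorem machinery behind Theorem~\ref{diffeomorphismthm}.

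The soft spot is the continuity claim, and it is bigger than your phrasing suggests. You write that a cluster point $\hat q^\ast$ is a minimizer ``by joint lower semicontinuity in $q$ together with continuous dependence on $p$,'' but $p\mapsto\mathbb J^c_p(q)$ is \emph{not} continuous at a fixed $q\in\partial\mathfrak P_+$ in general: when $\int_K P\log Q\,dx$ is finite but $\int_K|\log Q|\,dx=\infty$, the term $-\int_K(P_k-P)\log Q\,dx$ is indeterminate under $\|P_k-P\|_\infty\to 0$. Your argument can be repaired by testing only against fixed $q_0\in\mathfrak P_+$ (where continuity in $p$ is trivial), which gives $\mathbb J^c_p(\hat q^\ast)\le\mathbb J^c_p(q_0)$ for all interior $q_0$, and then noting that $\inf_{\mathfrak P_+}\mathbb J^c_p=\inf_{\bar{\mathfrak P}_+}\mathbb J^c_p$ because $\mathbb J^c_p$ is convex, l.s.c., and finite at the minimizer (so that $t\mapsto\mathbb J^c_p((1-t)g^c(p)+tq_0)$ is continuous at $t=0^+$). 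The paper packages exactly this repair as Lemma~\ref{lem:Jcontinuity} (continuity of the \emph{optimal value} in $p$) together with the interior perturbation $q_k+\varepsilon q_0$ in Proposition~\ref{prop:p2qhatcontinuity}; that auxiliary lemma is the real content you are implicitly assuming. On boundary injectivity, you are right to flag the difficulty of justifying the pairing step when $\hat q\in\partial\mathfrak P_+$; the paper likewise defers, citing only a ``trivially modified'' version of Lemma~2.4 of \cite{byrnes2006thegeneralized}.
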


\begin{proof}
By Lemma~\ref{sublevelsetlem}  there exists a minimizer $\hat{q}\in\bar{\mathfrak P}_+$. Since $\mathbb{J}^c_p$ is strictly convex (Lemma~\ref{striclyconvexlem}), this minimizer is unique. 
Injectivity of the function $g^c$ follows from a trivially modified version of the proof of Lemma 2.4 in \cite{byrnes2006thegeneralized}. Continuity of $g^c$ is proved in the appendix (Proposition~\ref{prop:p2qhatcontinuity}). If Condition~\ref{condition2} holds and $p\in\mathfrak{P}_+$, the functional \eqref{eq:objFun} has a stationary point in the open cone $\mathfrak{P}_+$ (Theorem~\ref{diffeomorphismthm}), which must then be identical to $\hat{q}$.
\end{proof}

We note in passing that, for $(c,p) \in\mathfrak{C}_+\times\mathfrak{P}_+$, the convex optimization problem to minimize $\mathbb{J}_p^c$ over all $q\in\mathfrak{P}_+$ is the dual of the problem to maximize
\begin{equation}
\label{primalfunctional}
\mathbb{I}_p(\Phi)=\int_K P(x)\log\Phi(x)dx
\end{equation}
over all $\Phi\in{\script F}_+$ satisfying the moment condition
\begin{equation}
\label{Phimoment}
\int_K \alpha(x)\Phi(x)dx =c,
\end{equation}
where ${\script F}_+$ is the class of positive functions in $L_1(K)$. In fact, we have the following duality result.

\begin{thm}\label{thm:sec4Duality}
Suppose that $(c,p) \in \mathfrak{C}_+\times\mathfrak{P}_+$ and that Condition~\ref{condition2} holds. Then the optimization problem to maximize \eqref{primalfunctional} over all $\Phi\in{\script F}_+$ satisfying the moment condition \eqref{Phimoment} has a unique solution 
\begin{equation*}
\hat{\Phi}=\frac{P}{\hat Q},
\end{equation*}
where $\hat{q}$ is the unique minimizer of $\mathbb{J}_p^c$. Moreover,
\begin{equation*}
\mathbb{I}_p(\hat\Phi)=\mathbb{J}_p^c(\hat{q}) + \int_K P(\log P -1)dx .
\end{equation*}
\end{thm}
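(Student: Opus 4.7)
The plan is to recognize the claim as a textbook Lagrangian duality pairing, with $q\in\mathfrak{P}_+$ playing the role of the Lagrange multiplier for the moment constraint \eqref{Phimoment}. First I would form the Lagrangian
\[
L(\Phi,q)=\int_K P\log\Phi\,dx+\Bigl\langle q,\,c-\int_K\alpha\Phi\,dx\Bigr\rangle =\langle c,q\rangle+\int_K\bigl(P\log\Phi-Q\Phi\bigr)dx,
\]
and for each fixed $q\in\mathfrak{P}_+$ maximize the integrand in $\Phi>0$ pointwise. Differentiating $P\log\Phi-Q\Phi$ in $\Phi$ yields the maximizer $\Phi_q=P/Q$, and substituting back gives the dual function
\[
\sup_{\Phi\in\script{F}_+}L(\Phi,q)=\mathbb{J}^c_p(q)+\int_K P(\log P-1)\,dx.
\]
So the dual problem agrees, up to an additive constant, with minimizing $\mathbb{J}^c_p$ over $\bar{\mathfrak{P}}_+$; its unique minimizer $\hat q$ is supplied by Theorem~\ref{uniqueminthm}, and in fact lies in $\mathfrak{P}_+$ thanks to Condition~\ref{condition2} together with $p\in\mathfrak{P}_+$.

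Next I would check that $\hat\Phi:=P/\hat Q$ is admissible and optimal for the primal. Admissibility: since $\hat q\in\mathfrak{P}_+$, the denominator $\hat Q$ is bounded below on $K$ by a positive constant, so $\hat\Phi$ is continuous and in $L_1(K)$, and positive almost everywhere because the zero locus of $P$ has measure zero. Feasibility $\int_K\alpha\hat\Phi\,dx=c$ is exactly Corollary~\ref{rationalmomentscor}. Optimality would then follow from the strict concavity inequality $\log x\le\log x_0+(x-x_0)/x_0$ for $x,x_0>0$, with equality iff $x=x_0$: multiplying by $P\ge 0$, integrating, and using $P/\hat\Phi=\hat Q$ together with the fact that both $\Phi$ and $\hat\Phi$ satisfy \eqref{Phimoment} gives
\[
\mathbb{I}_p(\Phi)-\mathbb{I}_p(\hat\Phi)\le\int_K \hat Q(\Phi-\hat\Phi)\,dx=\langle\hat q,\,c-c\rangle=0
\]
for every competitor $\Phi\in\script{F}_+$ satisfying \eqref{Phimoment}. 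Equality forces $\Phi=\hat\Phi$ a.e.\ on $\{P>0\}$, and hence a.e.\ on $K$, which delivers both optimality and uniqueness.

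Finally the stated identity is a one-line computation. Because $\hat q$ is an \emph{interior} stationary point of $\mathbb{J}^c_p$, the first-order condition $c=f^p(\hat q)$ gives $\langle c,\hat q\rangle=\int_K \hat Q(P/\hat Q)\,dx=\int_K P\,dx$, so $\mathbb{J}^c_p(\hat q)=\int_K P\,dx-\int_K P\log\hat Q\,dx$; subtracting this from $\mathbb{I}_p(\hat\Phi)=\int_K P\log P\,dx-\int_K P\log\hat Q\,dx$ produces precisely the additive constant $\int_K P(\log P-1)\,dx$. The only genuine delicacy in the argument is that everything relies on $\hat q$ lying in the open cone $\mathfrak{P}_+$ rather than merely in $\bar{\mathfrak{P}}_+$, since both Corollary~\ref{rationalmomentscor} and the stationarity identity require it; this is exactly the content of the last sentence of Theorem~\ref{uniqueminthm} under Condition~\ref{condition2}.
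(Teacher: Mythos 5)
Your proof is correct and follows essentially the same route as the paper: form the Lagrangian with $q$ as the multiplier for the moment constraint, maximize pointwise in $\Phi$ to obtain $\Phi_q=P/Q$, recognize the resulting dual objective as $\mathbb{J}^c_p(q)$ up to the additive constant $\int_K P(\log P-1)\,dx$, and invoke Theorem~\ref{uniqueminthm} together with Condition~\ref{condition2} to place the minimizer $\hat q$ in the open cone $\mathfrak{P}_+$ so that Corollary~\ref{rationalmomentscor} certifies feasibility of $\hat\Phi=P/\hat Q$. One small point in your favor: your tangent-line estimate $\log\Phi\le\log\hat\Phi+(\Phi-\hat\Phi)/\hat\Phi$, with strict inequality off $\{\Phi=\hat\Phi\}$, explicitly delivers the uniqueness assertion of the theorem on the set $\{P>0\}$ (and hence a.e.\ by the standing assumption on $\zeros(P)$), whereas the paper's proof appeals only to concavity of $\Phi\mapsto L(\Phi,q)$ and does not spell out why the maximizer is unique.
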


\begin{proof}
Given Lagrange multipliers $q=(q_1,q_2,\dots,q_n)$, form the Lagrangian 
\begin{displaymath}
\begin{split}
L(\Phi,q)&=\mathbb{I}_p(\Phi) +\sum_{k=1}^n q_k\left(c_k-\int_K \alpha_k\Phi dx\right)\\
             &=\mathbb{I}_p(\Phi) + \langle c,q\rangle - \int_K Q\Phi dx,
\end{split}
\end{displaymath}
which is finite for any fixed $q\in\mathfrak{P}_+$. 
Setting the Fr{\'e}chet differential
\begin{displaymath}
\delta L(\Phi,q;\delta\Phi) = \int_K \left(\frac{P}{\Phi} -Q\right)\delta\Phi dx =0
\end{displaymath}
for all $\delta\Phi$, we obtain the stationary point $\Phi=P/Q$, which inserted into the Lagrangian yields
\begin{displaymath}
L(P/Q,q)=\mathbb{J}_p^c(q) + \int_K P(\log P -1)dx .
\end{displaymath} 
Since $\Phi\mapsto L(\Phi,q)$ is concave for any $q\in\mathfrak{P}_+$, we have $L(\Phi,q)\leq L(P/Q,q)$ for all $\Phi\in{\script F}_+$.  However, by Theorem~\ref{uniqueminthm}, there is a unique $q\in\mathfrak{P}_+$ such that $\Phi:=P/Q$ satisfies \eqref{Phimoment}, namely $\hat q$, and hence 
\begin{displaymath}
\mathbb{I}_p(\Phi)= L(\Phi,\hat q)\leq L(P/\hat{Q},\hat q) = \mathbb{I}_p(\hat\Phi)
\end{displaymath}  
for all $\Phi\in{\script F}_+$ satisfying \eqref{Phimoment}, proving the required optimality. 
\end{proof}

The choice $P\equiv 1$ yields the {\em maximum entropy\/} solution of the moment problem. This duality has been extensively discussed in \cite{BGuL,byrnes2001ageneralized, byrnes2001fromfinite,byrnes2002identifyability,GL1,BLkimura,byrnes2008important,BLkrein} in the one-dimensional case. In the special case of trigonometric basis functions, the multidimensional case was already covered in \cite{lang1982multidimensional,lang1983spectral,mcclellan1982multi-dimensional}, but in a more general framework of weighted maximum entropy optimization which does not consider parameterization of rational solutions and related issues, something that  is important in our present context.
  
The duality result of Theorem~\ref{thm:sec4Duality} relies on Condition \ref{condition2} and will be reformulated in Section~\ref{sec:boundary}, where this condition will not be required.  In this case the optimal solution may contain a singular part, that is, be of the form \eqref{complexity-constraint} (Theorem~\ref{thm:mainDuality}).

\begin{thm}\label{gcdiffthm}
Suppose that Condition~\ref{condition2} holds and $c\in\mathfrak{C}_+$.  Then the map $g^c:\, \mathfrak{P}_+\to\mathfrak{P}_+$, restricted to $\mathfrak{P}_+$, is a diffeomorphism onto its image $\mathfrak{Q}_+$.   
\end{thm}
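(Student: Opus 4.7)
The plan is to apply the implicit function theorem to the stationarity equation characterizing $g^c$ on $\mathfrak{P}_+$, and then combine smoothness and nonsingularity of the derivative with the injectivity already granted by Theorem~\ref{uniqueminthm}. Concretely, when Condition~\ref{condition2} holds and $p\in\mathfrak{P}_+$, the minimizer $\hat q = g^c(p)$ lies in $\mathfrak{P}_+$ and is the unique zero in $q$ of
\[
F(p,q) := c - f^p(q) = c - \int_K \alpha\,\frac{P}{Q}\,dx .
\]
Thus $F(p,g^c(p))\equiv 0$ on $\mathfrak{P}_+$.

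Next, I would compute the two Jacobians of $F$. The partial derivative in $q$ is
\[
\frac{\partial F}{\partial q} = \int_K \alpha\alpha'\,\frac{P}{Q^2}\,dx ,
\]
which on $\mathfrak{P}_+\times\mathfrak{P}_+$ is symmetric and strictly positive definite by exactly the argument used in the proof of Theorem~\ref{diffeomorphismthm} (positivity of $P/Q^2$ off a null set, together with linear independence of the $\alpha_k$). The partial derivative in $p$ is
\[
\frac{\partial F}{\partial p} = -\int_K \alpha\alpha'\,\frac{1}{Q}\,dx ,
\]
which is symmetric negative definite whenever $q\in\mathfrak{P}_+$. The implicit function theorem then yields that $g^c$ is smooth on $\mathfrak{P}_+$ with
\[
\frac{\partial g^c}{\partial p}(p) = \Bigl(\int_K \alpha\alpha'\,\tfrac{P}{Q^2}\,dx\Bigr)^{-1} \int_K \alpha\alpha'\,\tfrac{1}{Q}\,dx .
\]
This is of the form $A^{-1}B$ with $A,B$ symmetric positive definite, hence similar to the symmetric positive definite matrix $A^{-1/2}BA^{-1/2}$; its eigenvalues are therefore positive reals, and in particular $\partial g^c/\partial p$ is nonsingular at every point of $\mathfrak{P}_+$.

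Finally, I would invoke the inverse function theorem: $g^c$ is smooth with nowhere-singular derivative, so it is a local diffeomorphism, and the image $\mathfrak{Q}_+ := g^c(\mathfrak{P}_+)$ is therefore open in $\mathbb{R}^n$. Combined with the global injectivity of $g^c$ from Theorem~\ref{uniqueminthm}, this gives that $g^c:\mathfrak{P}_+\to\mathfrak{Q}_+$ is a bijective smooth map with smooth inverse, i.e., a diffeomorphism onto its image. The only real obstacle is the verification that $\partial g^c/\partial p$ is nonsingular; this reduces to the elementary linear-algebra fact about products of symmetric positive definite matrices noted above, and everything else is bookkeeping tying results already established in the paper together.
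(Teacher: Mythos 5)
Your proof is correct and follows essentially the same route as the paper: both arguments apply the implicit function theorem to the stationarity equation $F(p,q)=c-f^p(q)=0$, use positive definiteness of the Hessian $\int_K \alpha\alpha'\,P/Q^2\,dx$ to get a smooth local solution that must agree with $g^c$ by uniqueness of the minimizer, and then conclude via the inverse function theorem together with the global injectivity from Theorem~\ref{uniqueminthm}. Two small points where your write-up is actually tighter than the paper's: (i) you observe explicitly that $\partial g^c/\partial p = A^{-1}B$ need not be symmetric, and justify nonsingularity by noting it is similar to $A^{-1/2}BA^{-1/2}$ (the paper simply calls it ``positive definite,'' which is imprecise for a nonsymmetric matrix); (ii) you obtain openness of $\mathfrak{Q}_+$ directly from the local diffeomorphism property, whereas the paper first invokes continuity and injectivity (i.e., invariance of domain) before turning to the implicit function theorem, which is slightly redundant once the IFT is in play.
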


\begin{proof}
Since $g^c$ is continuous and injective (Theorem~\ref{uniqueminthm}), $\mathfrak{Q}_+$ is an open set of the same dimension as $\mathfrak{P}_+$. By definition, $g^c:\, \mathfrak{P}_+\to\mathfrak{Q}_+$ is also surjective. Next define the function
\begin{displaymath}
\varphi(p,q)=c-\int_K\alpha\frac{P}{Q}dx.
\end{displaymath}
Then the moment equations (stationarity condition) can be written $\varphi(p,q)=0$. Since 
\begin{displaymath}
\frac{\partial\varphi}{\partial q}= \int_K\alpha\frac{P}{Q^2}\alpha' dx
\end{displaymath}
is positive definite on all of $\mathfrak{P}_+\times\mathfrak{P}_+$, the Implicit Function Theorem implies that  $q=g^c(p)$  where $g^c$ is continuously differentiable. Moreover,
\begin{displaymath}
\frac{\partial g^c}{\partial p}(p)=-\left[\int_K\alpha\frac{P}{Q^2}\alpha' dx\right]^{-1}\frac{\partial\varphi}{\partial p}(p,g^c(p))
\end{displaymath}
is positive definite, since 
\begin{displaymath}
\frac{\partial\varphi}{\partial p}(p,q)= -\int_K\alpha\frac{1}{Q}\alpha' dx 
\end{displaymath}
is negative definite. Hence, by the Inverse Function Theorem, the inverse function $(g^c)^{-1}$ is also continuously differentiable. Consequently, $g^c:\, \mathfrak{P}_+\to\mathfrak{Q}_+$ is a diffeomorphism.
\end{proof}

Together with Corollary~\ref{rationalmomentscor}, Theorem~\ref{gcdiffthm} yields  a complete parameterization of all solutions of the rational moment equations \eqref{rationalmoments}. This generalizes to the multidimensional case the corresponding results in \cite{byrnes2006thegeneralized}, which in turn are  generalizations of  the results in \cite{byrnes1995acomplete}.

If $q\in \partial\mathfrak{P}_+$, it follows that $q\not\in \mathfrak{Q}_+:=g^c(\mathfrak{P}_+)\subset \mathfrak{P}_+$, and hence, by Theorem~\ref{gcdiffthm}, $p\not\in \mathfrak{P}_+$, and thus $p\in \partial\mathfrak{P}_+$. This yields the following corollary.

\begin{cor}
Suppose that Condition~\ref{condition2} holds, $c\in\mathfrak{C}_+$, and $P/Q$ satisfies the rational moment condition \eqref{rationalmoments}. Then $q\in\partial\mathfrak{P}_+$ implies that $p\in\partial\mathfrak{P}_+$. 
\end{cor}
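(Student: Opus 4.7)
The plan is to establish this by contrapositive: assume $p \in \mathfrak{P}_+$ and deduce that $q \in \mathfrak{P}_+$, contradicting the hypothesis $q \in \partial\mathfrak{P}_+$. First I would invoke Theorem~\ref{uniqueminthm}: since Condition~\ref{condition2} holds and $p \in \mathfrak{P}_+$, the functional $\mathbb{J}_p^c$ has a unique minimizer $\hat q = g^c(p)$ lying in $\mathfrak{P}_+$, and the stationarity condition $\partial\mathbb{J}_p^c/\partial q = c - f^p(q) = 0$ shows that $\hat q$ satisfies the moment equations \eqref{rationalmoments}. By Theorem~\ref{gcdiffthm}, this minimizer lies in $\mathfrak{Q}_+ \subset \mathfrak{P}_+$.

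The remaining step is to rule out the possibility that the given boundary element $q \in \partial\mathfrak{P}_+$ is a second solution of \eqref{rationalmoments}. For this I would pair the moment equations with $p$ to obtain
\[
\langle c, p \rangle = \sum_{k=1}^n p_k \int_K \alpha_k \frac{P}{Q}\, dx = \int_K \frac{P^2}{Q}\, dx.
\]
Since $p \in \mathfrak{P}_+$, continuity of $P$ on the compact set $K$ yields a uniform lower bound $P \geq \varepsilon > 0$, so
\[
\int_K \frac{P^2}{Q}\, dx \;\geq\; \varepsilon^2 \int_K \frac{dx}{Q} \;=\; \infty
\]
by Condition~\ref{condition2}, since $q \in \partial\mathfrak{P}_+$. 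But $\langle c, p \rangle$ is manifestly finite, giving the required contradiction. Hence $p \notin \mathfrak{P}_+$, and since $p \in \bar{\mathfrak{P}}_+$ (as $P/Q$ is used in the moment equations), we conclude $p \in \partial\mathfrak{P}_+$.

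The main subtlety to watch for — rather than a deep obstacle — is that Corollary~\ref{rationalmomentscor} provides uniqueness of the moment equation's solution only within $\mathfrak{P}_+$, so the $g^c$ part of the argument by itself does not immediately exclude additional boundary solutions. The pairing computation above is precisely what extends this exclusion to all of $\partial\mathfrak{P}_+$, and it fits the one-line sketch alluded to in the text preceding the corollary.
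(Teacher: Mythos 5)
Your proof is correct, and it is in fact more careful than the paper's own one-line sketch, which reads: ``If $q\in \partial\mathfrak{P}_+$, it follows that $q\not\in \mathfrak{Q}_+:=g^c(\mathfrak{P}_+)\subset \mathfrak{P}_+$, and hence, by Theorem~\ref{gcdiffthm}, $p\not\in \mathfrak{P}_+$.'' That sketch implicitly identifies the given $q$ with $g^c(p)$, which requires knowing that a boundary $q$ cannot coexist as a second solution of \eqref{rationalmoments} alongside the interior solution furnished by Corollary~\ref{rationalmomentscor}; you pinpoint this gap exactly. Your pairing computation, $\langle c,p\rangle = \int_K P^2/Q\,dx \ge \varepsilon^2 \int_K dx/Q = \infty$ under Condition~\ref{condition2}, closes it cleanly and in fact mirrors the decisive estimate $\langle f^p(\hat q),p\rangle = \int_K P^2/Q\,dx = \infty$ already used in the proof of Lemma~\ref{properlem} to establish properness of $f^p$. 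As you note, once this divergence argument is in place the first paragraph invoking Theorems~\ref{uniqueminthm} and~\ref{gcdiffthm} becomes inessential: the contradiction already shows that no pair $(p,q)$ with $p\in\mathfrak{P}_+$ and $q\in\partial\mathfrak{P}_+$ can satisfy \eqref{rationalmoments}, and since $p\in\bar{\mathfrak{P}}_+$ by hypothesis, $p\in\partial\mathfrak{P}_+$ follows. So yours is a more elementary and self-contained route; the paper's version buys brevity by appealing to the diffeomorphism, at the cost of leaving the boundary-uniqueness step implicit.
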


Note, however, that the converse is not true. In fact, a\/ $q\in\partial\mathfrak{Q}_+$ could be contained in $\mathfrak{P}_+$ as the following simple one-dimensional example shows. Let $K=[-\pi,\pi]$,
$\alpha_1=1$ and $\alpha_2=\cos x$, and suppose that $P(x)=1-\cos x$ and $Q(x)\equiv 1$. Then $c=(1,-1/2)$. Since $c$  has a positive definite Toeplitz matrix,  $c\in\mathfrak{C}_+$. Moreover, since $P(0)=0$, $p\in\partial\mathfrak{P}_+$, and therefore, by Theorem~\ref{gcdiffthm}, $q=(1,0)\in\partial\mathfrak{Q}_+$. However, clearly $q\in\mathfrak{P}_+$. 

Moreover, again Condition~\ref{condition2} is crucial. In fact, in Example~\ref{nonLipschitzex}, where this condition does not hold, $q\in\partial\mathfrak{P}_+$ whereas $p\in\mathfrak{P}_+$ is in the interior. However, under the variable substitution $y=x^{1/3}$, which makes the basis Lipschitz continuous so that Condition~\ref{condition2} holds, $p$  moves to the boundary $\partial\mathfrak{P}_+$.


\section{Solutions on the boundary}\label{sec:boundary}

If Condition~\ref{condition2} holds and $p\in\mathfrak{P}_+$, then $\mathbb{J}^c_p$ has a unique minimum in the open cone $\mathfrak{P}_+$, which solves the  moment equations \eqref{rationalmoments}. On the other hand, if these conditions are not satisfied, the minimizer  may end up on the boundary $\partial\mathfrak{P}_+$, leading to complications  described in \cite{nurdin2006tac,nurdin2006new} for the special case of rational covariance extension. Therefore, in the present more general situation, the constraint $Q(x)\geq 0$ becomes essential for solving the optimization problem.  

\begin{thm}\label{thm:mainMoment}
Let $(c,p) \in  \mathfrak{C}_+\times(\bar{\mathfrak{P}}_+ \setminus \{0\})$. Then there exists a unique pair $(\hat{c},\hat{q}) \in \partial \mathfrak{C}_+\times (\bar{\mathfrak{P}}_+ \setminus \{0\})$ such that 
\begin{subequations}\label{generalmomenteqn}
\begin{equation}\label{generalmomenteqna}
c_k = \int_K \alpha_k  \frac{P}{\hat Q}dx + \hat{c}_k, \quad k=0,1,\dots, n.
\end{equation}
Here
\begin{equation}\label{mu2chat}
\hat{c}_k=\int_K \alpha_k d\hat{\mu} ,\quad k=0,1,\dots, n, 
\end{equation}
\end{subequations}
where $d\hat{\mu}$ is a (not necessarily unique) singular bounded nonnegative measure such that $\supp(d\hat{\mu}) \subset \zeros(\hat Q)$, i.e., the support of the discrete measure $d\hat{\mu}$ is contained in the set of zeros of $\hat Q$. The vector $\hat q$ is the unique minimizer of \eqref{eq:objFun} over $\bar{\mathfrak{P}}_+$. 
\end{thm}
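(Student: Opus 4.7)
The plan is to identify $\hat q$ as the unique minimizer of $\mathbb{J}_p^c$ over the closed cone $\bar{\mathfrak{P}}_+$ and then to read the pair $(\hat c,d\hat\mu)$ off the Karush--Kuhn--Tucker conditions of this constrained convex program, with $d\hat\mu$ playing the role of the Lagrange multiplier associated with the pointwise constraint $Q(x)\ge 0$, $x\in K$.

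First, I would establish that $\mathbb{J}_p^c$ admits a unique minimizer $\hat q\in\bar{\mathfrak{P}}_+\setminus\{0\}$. Existence follows from the lower semi-continuity of $\mathbb{J}_p^c$ (Lemma~\ref{lowersemicontlem}) together with the compactness of its sublevel sets (Lemma~\ref{sublevelsetlem}), while strict convexity (Lemma~\ref{striclyconvexlem}) yields uniqueness. To rule out $\hat q=0$, note that at $q=0$ we have $Q\equiv 0$, so the term $-\int_K P\log Q\,dx$ equals $+\infty$ (using $P\not\equiv 0$ since $p\ne 0$, together with the standing hypothesis of Section~\ref{sec:cones} that the zero locus of $P$ has measure zero); meanwhile $\mathbb{J}_p^c$ is finite at any $q_0\in\mathfrak{P}_+$, where $Q_0$ is bounded away from zero on the compact set $K$.

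Next, I would derive the moment equations by a Lagrangian argument analogous to the one used for \eqref{dual}. Rewriting the constraint $q\in\bar{\mathfrak{P}}_+$ as the semi-infinite family $Q(x)\ge 0$, $x\in K$, the non-emptiness of $\mathfrak{P}_+$ supplies a Slater point and hence, by standard convex duality in $C(K)^*$, produces a bounded nonnegative Radon measure $d\hat\mu$ on $K$ as the associated multiplier. Differentiating the Lagrangian
\[
L(q,d\mu) = \mathbb{J}_p^c(q) - \int_K Q\,d\mu
\]
in $q_k$ yields the stationarity identity
\[
c_k = \int_K \alpha_k \frac{P}{\hat Q}\,dx + \int_K \alpha_k\,d\hat\mu,\qquad k=1,\dots,n,
\]
which, setting $\hat c_k := \int_K \alpha_k\,d\hat\mu$, is precisely \eqref{generalmomenteqna}--\eqref{mu2chat}. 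Complementary slackness $\int_K \hat Q\,d\hat\mu = 0$, combined with $\hat Q\ge 0$ and $d\hat\mu\ge 0$, forces $\supp(d\hat\mu)\subset\zeros(\hat Q)$; singularity of $d\hat\mu$ with respect to Lebesgue measure then follows from the standing hypothesis that $\zeros(\hat Q)$ has Lebesgue measure zero.

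Finally, I would verify $\hat c\in\partial\mathfrak{C}_+$ and conclude uniqueness. Applying the argument of Proposition~\ref{mu2cprop} to $d\hat\mu$ gives $\hat c\in\bar{\mathfrak{C}}_+$, while rewriting complementary slackness as $\langle \hat c,\hat q\rangle = \int_K \hat Q\,d\hat\mu = 0$ with $\hat q\in\bar{\mathfrak{P}}_+\setminus\{0\}$ precludes $\hat c\in\mathfrak{C}_+$; hence $\hat c\in\partial\mathfrak{C}_+$. Uniqueness of $\hat q$ is strict convexity, and $\hat c = c-\int_K\alpha(P/\hat Q)\,dx$ is then determined, whereas $d\hat\mu$ need not be, since two distinct measures supported on $\zeros(\hat Q)$ can share the same moments $\hat c$. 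I expect the main obstacle to be the rigorous execution of the Lagrangian step when $\hat q\in\partial\mathfrak{P}_+$: the gradient integral $\int_K\alpha(P/\hat Q)\,dx$ is not a priori finite (cf.\ Example~\ref{nonLipschitzex}) and must be interpreted as a limit compensated by $d\hat\mu$, and the existence of a bounded measure multiplier for the pointwise cone constraint must be justified. A natural way around this is either to invoke an infinite-dimensional Lagrange multiplier theorem in the style of Luenberger (using $\mathfrak{P}_+$ as a Slater point), or to approximate $\hat q$ by interior minimizers of a perturbed problem (for instance $\min_{q\in\bar{\mathfrak{P}}_+}\mathbb{J}_p^c(q)-\varepsilon\int_K\log Q\,dx$) and pass to the weak$^*$-limit of the associated multipliers, whose total variations are uniformly bounded by $\langle c,p_0\rangle/\min_K P_0$ for any fixed $p_0\in\mathfrak{P}_+$ via the identity $\langle\hat c^\varepsilon,p_0\rangle = \int_K P_0\,d\hat\mu^\varepsilon$.
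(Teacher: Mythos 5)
Your proposal follows essentially the same route as the paper's own proof: both treat the minimization of $\mathbb{J}_p^c$ over $\bar{\mathfrak{P}}_+$ as a convex program with the pointwise constraint $Q\ge 0$, introduce the Lagrangian $L(q,d\mu)=\mathbb{J}_p^c(q)-\int_K Q\,d\mu$ with a nonnegative multiplier $d\mu\in C(K)^*$, derive the stationarity identity \eqref{generalmomenteqna} together with complementary slackness $\int_K\hat Q\,d\hat\mu=0$, invoke strict feasibility (any $q_0\in\mathfrak{P}_+$) so that the KKT conditions are necessary and sufficient, and conclude $\hat c\in\partial\mathfrak{C}_+$ from $\hat c\in\bar{\mathfrak{C}}_+$ (Proposition~\ref{mu2cprop}) combined with $\langle\hat c,\hat q\rangle=0$. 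You add two clarifications that the paper leaves implicit: an explicit argument that $\hat q\ne 0$ (the paper relies on Lemma~\ref{sublevelsetlem}, whose lower bound $\epsilon_c\|Q\|_\infty-\epsilon_p\log\|Q\|_\infty$ also diverges as $\|Q\|_\infty\to 0$), and an explicit note that singularity of $d\hat\mu$ follows because the standing hypothesis makes $\zeros(\hat Q)$ Lebesgue-null. Your flagged concern about the finiteness of $\int_K\alpha(P/\hat Q)\,dx$ when $\hat q\in\partial\mathfrak{P}_+$ is exactly the right instinct: that is the pitfall behind the faulty dominated-convergence step in \cite{byrnes2006thegeneralized} that Section~\ref{errata} corrects, and your perturbation-and-weak$^*$-limit workaround, with the uniform total-variation bound from $\langle\hat c^\varepsilon,p_0\rangle=\int_K P_0\,d\hat\mu^\varepsilon$, is a sound alternative to the paper's terser appeal to KKT theory in $C(K)^*$ for firming up that step.
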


\begin{proof}
Since $Q\in C(K)$, Lagrange relaxation leads to the Lagrangian
\begin{equation}
\label{Lagrangian}
L(q,\mu)=\mathbb{J}^c_p(q) -\int_K Q d\mu ,
\end{equation}
where $q\in\bar{\mathfrak{P}}_+$, and where the Lagrange multiplier $d\mu\in C(K)^*$ is a nonnegative measure.
Then setting 
\begin{displaymath}
\frac{\partial L}{\partial q_k}(q,\mu)=c_k-\int_K \alpha_k\frac{P}{Q}dx - \int_K \alpha_k d\mu, \quad k=1,2,\dots,n, 
\end{displaymath}
equal to zero, we obtain \eqref{generalmomenteqn} for the saddle point $(\hat q, \hat \mu)$ \cite{Luenberger}.  By Theorem~\ref{uniqueminthm}, the minimizer $\hat q$ is unique. Then it is seen from  \eqref{generalmomenteqna} that $\hat c$ is also unique.
Now, by complementary slackness \cite[Theorem 1, p. 217]{Luenberger}, 
\begin{equation}
\label{complslackness}
\int_K \hat Q d\hat\mu = 0.
\end{equation}
Since the optimization problem is convex with a strictly feasible point,  the KKT conditions (i) $\hat q\in\bar{\mathfrak{P}}_+$,  (ii) $d\hat\mu$ nonnegative bounded measure in $C(K)^*$, (iii) \eqref{generalmomenteqn} and (iv) \eqref{complslackness}  are necessary and sufficient for optimality. Since $\hat Q\geq 0$ and  $d\hat\mu\geq 0$ on $K$, \eqref{complslackness} can only hold if the support of $d\hat\mu$ is contained in the set of zeros of $\hat Q$. However, such zeros exist only if $\hat q\in\partial\mathfrak{P}_+$. 

It remains to show that $\hat c \in\partial \mathfrak{C}_+$. To this end, first note that, in view of the representation \eqref{mu2chat}, $\hat c\in\bar{\mathfrak{C}}_+$ (Proposition~\ref{mu2cprop}). 
Moreover, by \eqref{complslackness},
\begin{displaymath}
\langle \hat c,\hat q\rangle = \int_K\hat Q d\hat\mu =0,
\end{displaymath}
and hence $\hat c \in\partial \mathfrak{C}_+$. 
\end{proof}

We are now in a position to generalize the duality relation from Theorem \ref{thm:sec4Duality} to the more general setting where Condition~\ref{condition2} is no longer required. Thus we extend the domain of the objective function $\mathbb{I}_p$ of the primal problem to include any nonnegative measure $d\mu$ and define
\begin{equation}
\label{primalfunctionalMeasure}
\mathbb{I}_p(d\mu)=\int_K P(x)\log\Phi(x)dx ,
\end{equation}
where $d\mu = \Phi(x)dx + d\hat{\mu}$ is the unique Lebesgue decomposition of the measure  \cite{rudin1987real}. 
%
\begin{thm}\label{thm:mainDuality}
Let  $(c,p) \in  \mathfrak{C}_+\times(\bar{\mathfrak{P}}_+ \setminus \{0\})$ be given. Then the problem to  maximize \eqref{primalfunctionalMeasure} over the set of nonnegative measures of bounded variation, subject to the moment condition \eqref{momenteqn}, has a solution on the form 
\[
d\mu = \frac{P(x)}{\hat{Q}(x)}dx + d\hat{\mu},
\] 
where $\hat q\in \bar{\mathfrak{P}}_+ \setminus \{0\}$ and $d\hat{\mu}$ is a  singular bounded nonnegative measure such that $\supp(d\hat{\mu}) \subset \zeros(\hat Q)$.
\end{thm}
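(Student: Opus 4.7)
The plan is to extend the Lagrangian duality argument of Theorem~\ref{thm:sec4Duality} by enlarging the primal variable to an arbitrary bounded nonnegative measure with Lebesgue decomposition $d\mu=\Phi\,dx+d\hat\mu$, and by allowing the dual variable $q$ to reach the boundary $\partial\mathfrak{P}_+$. Theorem~\ref{thm:mainMoment} has already supplied the candidate saddle point: a unique $\hat q\in\bar{\mathfrak{P}}_+\setminus\{0\}$ minimizing $\mathbb{J}_p^c$, together with a singular measure $d\hat\mu$ supported in $\zeros(\hat Q)$, so in particular $\int_K\hat Q\,d\hat\mu=0$ by complementary slackness.

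First I would fix $q\in\mathfrak{P}_+$ (so $Q>0$ on the compact set $K$) and form the Lagrangian $L(d\mu,q)=\mathbb{I}_p(d\mu)+\langle q,\,c-\int_K\alpha\,d\mu\rangle =\int_K[P\log\Phi - Q\Phi]\,dx - \int_K Q\,d\hat\mu + \langle c,q\rangle$. Pointwise maximization of $P\log\Phi-Q\Phi$ in $\Phi>0$ gives the value $P\log(P/Q)-P$, attained at $\Phi=P/Q$, while the singular term $-\int_K Q\,d\hat\mu$ is nonpositive since $Q,d\hat\mu\ge 0$, with supremum $0$ attained at $d\hat\mu=0$. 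Hence $\sup_{d\mu\ge 0}L(d\mu,q)=\mathbb{J}_p^c(q)+\int_K P(\log P-1)\,dx$, and because the multiplier term vanishes on any feasible $d\mu$, this yields the weak-duality inequality $\mathbb{I}_p(d\mu)\le\mathbb{J}_p^c(q)+\int_K P(\log P-1)\,dx$ for every feasible $d\mu$ and every $q\in\mathfrak{P}_+$.

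Next I would sharpen the bound by passing to $q=\hat q$. If $\hat q\in\mathfrak{P}_+$ this is immediate; if $\hat q\in\partial\mathfrak{P}_+$, I would pick a reference point $q_0\in\mathfrak{P}_+$ (nonempty by hypothesis) and consider the segment $q_t:=(1-t)\hat q+tq_0\in\mathfrak{P}_+$ for $t\in(0,1]$. Convexity of $\mathbb{J}_p^c$ (Lemma~\ref{striclyconvexlem}) gives $\mathbb{J}_p^c(q_t)\le(1-t)\mathbb{J}_p^c(\hat q)+t\mathbb{J}_p^c(q_0)$, and combined with lower semicontinuity (Lemma~\ref{lowersemicontlem}) this forces $\inf_{q\in\mathfrak{P}_+}\mathbb{J}_p^c(q)=\mathbb{J}_p^c(\hat q)$. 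To conclude, I would verify that the candidate $d\mu^\star:=(P/\hat Q)\,dx+d\hat\mu$ of Theorem~\ref{thm:mainMoment} saturates this bound. Feasibility is \eqref{generalmomenteqna}. Pairing that equation with $\hat q$ yields $\langle c,\hat q\rangle=\int_K\hat Q(P/\hat Q)\,dx+\int_K\hat Q\,d\hat\mu=\int_K P\,dx$, using $\int_K\hat Q\,d\hat\mu=0$. Substituting this into $\mathbb{J}_p^c(\hat q)+\int_K P(\log P-1)\,dx$ produces exactly $\int_K P\log(P/\hat Q)\,dx=\mathbb{I}_p(d\mu^\star)$, so $d\mu^\star$ attains the bound and is optimal.

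The principal obstacle I anticipate is precisely the boundary passage from $q\in\mathfrak{P}_+$ to $q=\hat q$: the classical no-duality-gap invocation used in Theorem~\ref{thm:sec4Duality} rested on Condition~\ref{condition2} and $p\in\mathfrak{P}_+$, neither of which is available here, and must be replaced by the convexity--lower-semicontinuity recession argument above. A minor technical point is integrability of $P/\hat Q$ and finiteness of $\int_K P\log(P/\hat Q)\,dx$; the former is a consequence of \eqref{generalmomenteqna} (the moment integrals on its right-hand side being finite) and the latter follows from finiteness of $\mathbb{J}_p^c(\hat q)$ together with boundedness of $P$ on the compact set $K$.
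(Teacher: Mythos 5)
Your proof is correct and uses the same Lagrangian as the paper, but it is organized differently and is more careful at exactly the point where the paper is terse. The paper's proof simply observes that the extra singular term $-\int_K Q\,d\hat\mu$ vanishes for any saddle-point candidate and then asserts that the rest ``follows along the same lines as Theorem~\ref{thm:sec4Duality}.'' That earlier proof, however, worked with a dual variable $q\in\mathfrak{P}_+$ strictly interior (guaranteed by Condition~\ref{condition2} and $p\in\mathfrak{P}_+$), and the pointwise maximization $\Phi\mapsto P\log\Phi - Q\Phi$ at $\Phi = P/Q$ is only unproblematic when $Q>0$ on all of $K$. Your version patches this cleanly: you first establish weak duality $\mathbb{I}_p(d\mu)\le\mathbb{J}_p^c(q)+\int_K P(\log P-1)\,dx$ for all interior $q\in\mathfrak{P}_+$, then pass to $q=\hat q\in\partial\mathfrak{P}_+$ via the segment $q_t=(1-t)\hat q+tq_0\in\mathfrak{P}_+$ using convexity (Lemma~\ref{striclyconvexlem}) for the upper bound on the limit and lower semicontinuity (Lemma~\ref{lowersemicontlem}) for the lower bound, and finally verify that $d\mu^\star=(P/\hat Q)\,dx+d\hat\mu$ from Theorem~\ref{thm:mainMoment} attains the bound via complementary slackness $\int_K\hat Q\,d\hat\mu=0$. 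Your noted concern about integrability of $P\log(P/\hat Q)$ is also handled correctly: finiteness of $\mathbb{J}_p^c(\hat q)$ at the minimizer gives $\int_K P\log\hat Q\,dx>-\infty$, and $\int_K P\log P\,dx$ is finite by boundedness of $P$ on the compact $K$. In short, this is the paper's approach made explicit, and the recession argument you supply is a genuine addition that closes the gap the paper leaves implicit.
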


\begin{proof}
Following the same path as in the proof of Theorem \ref{thm:sec4Duality}, we now consider the Lagrangian 
\begin{displaymath}
\begin{split}
L(d\mu, q)&=\mathbb{I}_p(d\mu) +\sum_{k=1}^n q_k\left(c_k-\int_K \alpha_k (\Phi dx + d\hat{\mu})\right)\\
             &=\int_K P(x)\log\Phi(x)dx + \langle c,q\rangle - \int_K Q\Phi dx - \int_K Qd\hat{\mu}.
\end{split}
\end{displaymath}
Note that for $(d\mu, q)$ to be a saddle point it is necessary that $q \in \bar{\mathfrak{P}}_+$ and $\supp(d\hat{\mu}) \subset \zeros(Q)$. This means that the last term must disappear for any saddle point candidate,
and we are therefore left with a function that is identical to the Lagrangian in the proof of Theorem \ref{thm:sec4Duality}. The proof then follows along the same lines as  that of Theorem \ref{thm:sec4Duality}. Also note that the existence of $d\hat \mu$ is ensured by Theorem \ref{thm:mainMoment}.
\end{proof}

It is interesting to note that the functional \eqref{primalfunctionalMeasure} is concave but not strictly concave, since the value does not depend in the singular part $d\hat{\mu}$. Therefore it is not surprising that  the optimal singular measure $d\hat{\mu}$ is not guaranteed to be unique. Moreover the function \eqref{primalfunctionalMeasure} can in fact be seen as a Kullback-Leibler-like divergence index between the two measures $dp(x) := P(x)dx$ and $d\mu(x)$ \cite{GL1}. In fact, maximizing \eqref{primalfunctionalMeasure} is equivalent to minimizing $\int_{K} P \log \left( P/\Phi \right) dx$, 
and, since $dp$ is absolutely continuous with respect to $d\mu$, we have the Kullback-Leibler-like divergence 
\begin{displaymath}
\mathbb{S}_{KL}(dp\| d\mu ) := \int_{K} \log \left( \frac{dp}{d\mu} \right) dp = \int_{K} P \log \left( \frac{P}{\Phi} \right)dx,
\end{displaymath}
where $(dp/d\mu) = P/\Phi$ is the Radon-Nikodym derivative \cite[p. 553-554]{renyi1961measures}.

For later reference we collect the KKT conditions of Theorem~\ref{thm:mainMoment} in the following corollary. 

\begin{cor}\label{KKTcor}
Let $c \in \mathfrak{C}_+$. Then $\hat{q}$ is the optimal solution to 
\begin{equation}
 \min_{q \in \bar{\mathfrak{P}}_+} \mathbb{J}^c_p(q) \label{thm1:optimizer}
\end{equation}
if and only if
\begin{subequations}\label{thm1:2}
\begin{eqnarray}
&&\hat{q} \in \bar{\mathfrak{P}}_+, \quad \hat c \in\partial \mathfrak{C}_+\label{thm1:feas}\\
&&c_k = \int_K \alpha_k \frac{P}{\hat Q}dx + \hat{c}_k,\quad k=0,1,\dots,n  \label{thm1:KKT}\\
&&\langle \hat{c}, \hat{q} \rangle = 0 \label{thm1:slack}
\end{eqnarray}
\end{subequations}
\end{cor}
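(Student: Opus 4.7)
The plan is to view this corollary as a repackaging of Theorem~\ref{thm:mainMoment}, where the dual variable is the moment vector $\hat c$ rather than the singular measure $d\hat\mu$ producing it. The forward direction is then essentially a bookkeeping step, while the reverse direction needs a short convexity argument.

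For the forward direction (necessity), I would assume $\hat q$ minimizes $\mathbb{J}^c_p$ over $\bar{\mathfrak{P}}_+$ and invoke Theorem~\ref{thm:mainMoment} to produce a singular nonnegative measure $d\hat\mu$ with $\supp(d\hat\mu)\subset\zeros(\hat Q)$ satisfying \eqref{generalmomenteqn}. Defining $\hat c_k:=\int_K\alpha_k\,d\hat\mu$, equation \eqref{thm1:KKT} is literally \eqref{generalmomenteqna}, the membership $\hat c\in\partial\mathfrak{C}_+$ was already established in the final paragraph of the proof of Theorem~\ref{thm:mainMoment}, and the slackness \eqref{thm1:slack} follows from the support condition via
\[
\langle\hat c,\hat q\rangle=\int_K\hat Q\,d\hat\mu=0.
\]

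For the reverse direction (sufficiency), I would not try to reconstruct $d\hat\mu$. Instead, suppose $(\hat q,\hat c)$ satisfies \eqref{thm1:feas}--\eqref{thm1:slack}. By Lemma~\ref{striclyconvexlem}, $\mathbb{J}^c_p$ is strictly convex on $\bar{\mathfrak{P}}_+$. Equation \eqref{thm1:KKT} implicitly asserts that $\int_K\alpha_k P/\hat Q\,dx$ is finite, so the gradient $\nabla\mathbb{J}^c_p(\hat q)=c-\int_K\alpha P/\hat Q\,dx=\hat c$ is well defined. Then for any $q\in\bar{\mathfrak{P}}_+$, convexity and \eqref{thm1:slack} give
\[
\mathbb{J}^c_p(q)\ \geq\ \mathbb{J}^c_p(\hat q)+\langle\hat c,q-\hat q\rangle\ =\ \mathbb{J}^c_p(\hat q)+\langle\hat c,q\rangle\ \geq\ \mathbb{J}^c_p(\hat q),
\]
where the last inequality uses $\hat c\in\partial\mathfrak{C}_+\subset\bar{\mathfrak{C}}_+$ together with $q\in\bar{\mathfrak{P}}_+$ and the definition \eqref{Cplus}. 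Strict convexity then promotes $\hat q$ to the unique minimizer, matching the uniqueness in Theorem~\ref{uniqueminthm}.

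The subtle point is the gradient computation at boundary points of $\mathfrak{P}_+$. In general, when $\hat q\in\partial\mathfrak{P}_+$, the integrals $\int_K\alpha_k P/\hat Q\,dx$ can diverge (cf.\ Example~\ref{nonLipschitzex}), so the expression for $\nabla\mathbb{J}^c_p(\hat q)$ is only meaningful once we know these integrals are finite. This finiteness is precisely what \eqref{thm1:KKT} forces on any candidate $(\hat q,\hat c)$, and under that finiteness the standard first-order convexity inequality remains valid along feasible directions in $\bar{\mathfrak{P}}_+$, which is all that the sufficiency argument requires.
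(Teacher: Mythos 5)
Your proof is correct. The paper gives no separate argument for this corollary—the text immediately preceding it says it simply ``collects the KKT conditions of Theorem~\ref{thm:mainMoment}''—so your necessity direction coincides with what the paper implicitly intends. Your sufficiency direction, however, takes a genuinely different and cleaner route: the paper's implicit argument would be to reconstruct a singular nonnegative measure $d\hat\mu$ representing $\hat c$ with $\supp(d\hat\mu)\subset\zeros(\hat Q)$ (via Theorem~\ref{c2muthm} together with complementary slackness) and then cite the general fact that KKT conditions are sufficient for a convex program with a Slater point, whereas you bypass the reconstruction entirely by treating $\hat c$ as the gradient and invoking the first-order convexity inequality directly. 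You have also correctly identified the one subtle point: when $\hat q\in\partial\mathfrak{P}_+$ the formula for $\nabla\mathbb{J}^c_p(\hat q)$ is only meaningful because \eqref{thm1:KKT} forces $\int_K P/\hat Q\,dx<\infty$ (pair $c-\hat c$ with any $p_0\in\mathfrak{P}_+$ to see this, and then $\int_K\alpha_k P/\hat Q\,dx$ is finite because $\alpha_k$ is bounded on the compact $K$). To make the first-order inequality fully rigorous at such a boundary point one would observe that the difference quotients of the convex map $t\mapsto -\int_K P\log\bigl(\hat Q+t(Q-\hat Q)\bigr)dx$ are monotone in $t$ and apply monotone convergence to identify the one-sided directional derivative with $\langle\hat c,q-\hat q\rangle$; this is a small technical step that your sketch already anticipates and does not affect the validity of the argument.
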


Note that $\hat c=0$ whenever $\hat q\in \mathfrak{P}_+$, since then $\hat Q(x)>0$ on all of $K$. This is the situation of Corollary~\ref{rationalmomentscor}. If $\hat q\in\partial\mathfrak{P}_+$, we have $\supp(d\hat{\mu}) \subset \zeros(\hat Q)$. Although $\hat{c}$ in \eqref{mu2chat} is unique, the measure $d\hat{\mu}$ may not be unique in general, as can be seen from the following examples.

\begin{ex}
Next we provide a one-dimensional example where $d \hat{\mu}$ is not unique. Let $K=[0,1]$, $\alpha_1=1$ and $\alpha_2(x) = (1-x)\big( \cos\big( \frac{x}{1-x} \big) +1 \big)$. First note that $\alpha_2(1):=\lim_{x\to 1} \alpha_2(x) = 0$, and thus $\alpha_2$ is continuous on all of $K$. Since $\alpha_2(x) \in [0,2]$ for all $x\in K$, we have $\bar{\mathfrak{P}}_+=\{ q\in\mathbb{R}^2\mid q_1\geq 0, q_2\geq -\frac12 q_1\}$. Now take $c=(2,\gamma)'$, where $\gamma:=\int_K\alpha_2dx$. Since $\alpha_2\geq 0$ but not identically zero, $\gamma >0$. Moreover,
\begin{displaymath}
\gamma < \int_0^1\cos\left(\frac{x}{1-x}\right)dx +1 <2.
\end{displaymath}
Hence $0<\gamma<2$. Let $q\in\bar{\mathfrak{P}}_+\setminus\{0\}$ be arbitrary. If $q_1=0$, we have $q_2>0$, and hence $\langle c,q\rangle = \gamma q_2 >0$. If $q_1>0$, we have $\langle c,q\rangle =2q_1+\gamma q_2\geq \big(2-\frac{\gamma}{2}\big)q_1>0$. Therefore $\langle c,q\rangle>0$ for all $q\in\bar{\mathfrak{P}}_+\setminus\{0\}$, and hence $c\in\mathfrak{C}_+$. Next taking $P=\hat{Q}=\alpha_2$, we have $P(1)=\hat{Q}(1)=0$, i.e.,  $p=\hat{q}=(0,1)'\in\partial\mathfrak{P}_+$. Then forming
\begin{displaymath}
\begin{split}
\hat{c}_1 &=c_1 -\int_0^1\alpha_1\frac{P}{\hat{Q}}dx = 2 -1 =1\\
\hat{c}_2 &=c_2 -\int_0^1\alpha_2\frac{P}{\hat{Q}}dx = \gamma -\gamma =0
\end{split}
\end{displaymath} 
$\langle\hat{c},q\rangle=q_1\geq 0$ for all $q\in\bar{\mathfrak{P}}_+\setminus\{0\}$, and  hence $\hat{c}\in\bar{\mathfrak{C}}_+$. Moreover, $\langle\hat{c},\hat{q}\rangle=\hat{q}_1=0$, so $\hat{c}\in\partial\mathfrak{C}_+$. Consequently, by Corollary~\ref{KKTcor}, $\hat{q}$ is the minimizer of $\mathbb{J}_p^c$, and hence the support of the measure $d\hat{\mu}$ in \eqref{mu2chat} is contained in $\zeros(\hat{Q})=\zeros(\alpha_2)$. However $\alpha_2(t)$ have infinitely many zeros,  and any measure $d\mu=dx +d\hat{\mu}$ such that $\hat{\mu}(K)=1$ and $\supp(d\hat{\mu}) \subset \zeros(\hat Q)$ is a solution.
\end{ex}

\begin{ex}\label{2Dcancelationex}
It is easy to construct a small two-dimensional example for which the measure $d\hat{\mu}$, representing  $\hat c$, is not unique. Let $K=[0, 1]^2$ and $\alpha(x)=(1, x_1,  x_2, x_1x_2)'$, and define 
\begin{displaymath}
c=\int_K\alpha\frac{P}{\hat Q}dx +\hat{c},
\end{displaymath}
where $P(x)=x_1$, $\hat Q = x_1(1+x_2)$, and $\hat{c}=\int_K\alpha \delta(x_1)dx = (1, 0, 1/2, 0)'$.  Clearly $c\in\mathfrak{C}_+$ and $\hat{q} \in \bar{\mathfrak{P}}_+$. Moreover, $\hat{c}\in\partial\mathfrak{C}_+$. In fact,
\begin{displaymath}
\langle \hat{c},\hat{q}\rangle =\int_K \hat Qd\hat{\mu} = \int_{0}^1  x_1\delta(x_1)dx_1  \int_{0}^1 (1+ x_2)dx_2 =0.
\end{displaymath}
Therefore $(\hat c,\hat q)$ satisfies the KKT  conditions \eqref{thm1:2} and is the unique minimizer of $\mathbb{J}_p^c$ (Corollary~\ref{KKTcor}). However, $\zeros (\hat Q)$ is the whole line  $x_1=0$, and any measure $d\hat{\mu}$ with  mass $1$ and support constrained to $x_1=0$ such that  $\int_K x_2 d\hat{\mu}=1/2$ is a solution. 
Hence there are infinitely many ways to select $d\hat{\mu}$.  
\end{ex}

%
%

\begin{cond}\label{condition4}
The vectors $\alpha(x_1),\alpha(x_2),\dots,\alpha(x_m)$ are linearly independent, where  
$x_1,x_2,\dots,x_m$ are the points where the optimal polynomial \eqref{generalmomenteqn} have zeros, i.e., $\hat Q(x_j)=0$.
\end{cond}

\begin{prop}\label{linearindependenceprop}
Suppose that Condition~\ref{condition4} holds.
Then the measure $d\hat{\mu}$ in Theorem \ref{thm:mainMoment} is unique. Moreover, 
\begin{equation}
\label{chatdelta}
d\hat{\mu} = \sum_{j=1}^m a_j\delta(x-x_j)dx
\end{equation}
for some $a_1,\dots,a_m\in\mathbb{R}^n$, where $m\leq n$. 
\end{prop}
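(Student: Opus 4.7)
The plan is to exploit two consequences of Condition~\ref{condition4}: first, that $\hat Q$ has only finitely many zeros (since $m$ is finite), and second, that the $n \times m$ matrix $[\alpha(x_1), \dots, \alpha(x_m)]$ has full column rank. The first forces $d\hat{\mu}$ to be a finite sum of point masses, and the second pins down the coefficients uniquely from the already-unique $\hat c$ given by Theorem~\ref{thm:mainMoment}.

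First I would observe that, since $\alpha(x_1), \dots, \alpha(x_m)$ are linearly independent vectors in $\mathbb{R}^n$, we immediately get $m \le n$. Next, by Theorem~\ref{thm:mainMoment}, $d\hat{\mu}$ is a nonnegative bounded measure with $\supp(d\hat\mu) \subset \zeros(\hat Q) = \{x_1,\dots,x_m\}$. Since any nonnegative measure supported on a finite set is a finite sum of Dirac point masses, we can write
\begin{displaymath}
d\hat{\mu} = \sum_{j=1}^m a_j\,\delta(x-x_j)\,dx
\end{displaymath}
for some nonnegative scalars $a_1,\dots,a_m$. This already establishes the structural form \eqref{chatdelta}.

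It remains to prove uniqueness of the $a_j$'s. Substituting the above expression into \eqref{mu2chat} gives
\begin{displaymath}
\hat c = \sum_{j=1}^m a_j\,\alpha(x_j) = A\,a,
\end{displaymath}
where $A = [\alpha(x_1)\;\cdots\;\alpha(x_m)] \in \mathbb{R}^{n\times m}$ and $a = (a_1,\dots,a_m)'$. By Theorem~\ref{thm:mainMoment} the vector $\hat c$ is uniquely determined by $(c,p)$ via the minimizer $\hat q$ of $\mathbb{J}_p^c$. Condition~\ref{condition4} states that $A$ has linearly independent columns, so the linear system $A a = \hat c$ admits at most one solution; existence of $d\hat\mu$ in Theorem~\ref{thm:mainMoment} guarantees that this unique solution is in fact realized and has nonnegative entries. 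Hence $a$, and therefore $d\hat\mu$, is unique.

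There is no real obstacle here beyond correctly unpacking the definitions: the only thing to be careful about is that Condition~\ref{condition4} tacitly assumes that $\zeros(\hat Q)$ is finite (otherwise one could not enumerate the zeros as $x_1,\dots,x_m$ with $m \le n$), and that the linear independence is the precise condition needed to invert the map $a \mapsto A a$ on the range containing $\hat c$.
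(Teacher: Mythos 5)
Your proof is correct and follows essentially the same route as the paper: substitute the atomic form \eqref{chatdelta} into \eqref{mu2chat} to get $\hat c = Aa$ and invoke the full column rank of $A = [\alpha(x_1)\,\cdots\,\alpha(x_m)]$ guaranteed by Condition~\ref{condition4}. You fill in a few details the paper leaves tacit --- notably that Condition~\ref{condition4} implicitly requires $\zeros(\hat Q)$ to be finite (so $m\le n$ is automatic), that a nonnegative measure supported on a finite set is necessarily a sum of point masses, and that solvability with nonnegative weights is inherited from the existence statement in Theorem~\ref{thm:mainMoment} --- which makes the argument more self-contained without changing its substance.
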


\begin{proof}
Inserting \eqref{chatdelta} into \eqref{mu2chat} yields
\begin{displaymath}
\hat{c}_k=\sum_{j=1}^m \alpha_k(x_j)a_j,
\end{displaymath}
which has a unique solution $(a_1,a_2,\dots,a_m)$ if $\alpha(x_1),\alpha(x_2),\dots,\alpha(x_m)$ are linearly independent.
\end{proof}

\begin{rem} 
In the one-dimensional case ($d=1$),  $\alpha(x_1),\alpha(x_2), \ldots, \alpha(x_m)$ are linearly independent for all distinct points $x_1,x_2,\dots,x_m$ such that $m\leq n-1$ in any T-system \cite{karlin1966tchebycheff,krein1977themarkov}, for example, the trigonometric and power moment problems, and also the Herglotz basis used in Nevanlinna-Pick interpolation. 
In these cases the zero set of $Q$ will always satisfy the linear independence property of Proposition~\ref{linearindependenceprop}, resulting in a unique $d\hat{\mu}$. 
\end{rem}

\begin{thm} 
 Let $(c,p) \in  \mathfrak{C}_+\times(\bar{\mathfrak{P}}_+ \setminus \{0\})$. Suppose that Condition~\ref{condition4} holds for the minimizer $\hat{Q}$ of \eqref{thm1:optimizer}
and let $d\mu=P/\hat{Q}dx+d\hat\mu$ be the unique corresponding measure in \eqref{thm1:2}. Moreover, let  $(p_k)$ be a sequence in $\bar {\mathfrak P}_+$ such that $p_k\to p$ as $k\to \infty$, and let $d\mu_k=(P_k/Q_k)dx+d\hat \mu_k$ be measure \eqref{thm1:2} corresponding to the minimizer of\/ ${\mathbb J}_{p_k}^c$. Then  $d \mu_k\to d \mu$ in weak$^*$  as $k\to \infty$.
\end{thm}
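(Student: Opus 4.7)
The plan is to combine continuity of the minimizer map $g^c$ with weak$^*$ compactness of $\mathfrak{M}_c$, and then use the moment constraint together with Condition~\ref{condition4} to identify the limit uniquely. First, Theorem~\ref{uniqueminthm} gives that $g^c$ is continuous, so $\hat q_k:=g^c(p_k)\to \hat q$ in $\mathbb{R}^n$, and hence $Q_k\to\hat Q$ and $P_k\to P$ uniformly on the compact set $K$. Every $d\mu_k$ lies in $\mathfrak{M}_c$ by \eqref{generalmomenteqna}, and, as shown in the proof of Proposition~\ref{dmusupportn}, $\mathfrak{M}_c$ is compact in the weak$^*$ topology. It thus suffices to show that every weak$^*$ limit point $d\nu$ of the sequence $(d\mu_k)$ equals $d\mu$.

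Write $\zeros(\hat Q)=\{x_1,\ldots,x_m\}$, which is finite by Condition~\ref{condition4}. The main obstacle is that the densities $P_k/Q_k$ may blow up as $k\to\infty$ near the zeros $x_j$, so one cannot pass to the limit naively against an arbitrary test function. I would circumvent this by first testing only against $\varphi\in C(K)$ whose support is contained in $K\setminus\{x_1,\ldots,x_m\}$. On such a support $\hat Q$ is bounded away from $0$, and uniform convergence $Q_k\to\hat Q$ forces $Q_k$ to be bounded away from $0$ uniformly for large $k$, so $P_k/Q_k\to P/\hat Q$ uniformly on $\supp\varphi$. Moreover, again by uniform convergence, any zero of $Q_k$ must eventually lie in any prescribed neighborhood of $\zeros(\hat Q)$, so $\supp(d\hat{\mu}_k)\cap \supp\varphi=\emptyset$ for $k$ large and therefore $\int_K\varphi\,d\hat{\mu}_k=0$. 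Passing to the limit yields $\int_K\varphi\,d\nu=\int_K\varphi\,P/\hat Q\,dx$. This shows that $d\nu$ coincides with $(P/\hat Q)dx$ on $K\setminus\{x_1,\ldots,x_m\}$, and since the finite set $\{x_1,\ldots,x_m\}$ has Lebesgue measure zero I would conclude
\begin{displaymath}
d\nu=\frac{P}{\hat Q}dx+\sum_{j=1}^m b_j\,\delta_{x_j}
\end{displaymath}
for some nonnegative coefficients $b_j$.

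To close the argument, I would match moments. Since each $\alpha_k$ is continuous on $K$, the definition of weak$^*$ convergence gives $c=\int_K\alpha\,d\mu_k\to\int_K\alpha\,d\nu$, so $d\nu\in\mathfrak{M}_c$. Comparing with the representation $d\mu=(P/\hat Q)dx+\sum_j a_j\,\delta_{x_j}$ afforded by Proposition~\ref{linearindependenceprop} yields $\sum_j b_j\alpha(x_j)=\sum_j a_j\alpha(x_j)$, and the linear independence postulated in Condition~\ref{condition4} forces $b_j=a_j$ for all $j$. Hence $d\nu=d\mu$. Every subsequential weak$^*$ limit of $(d\mu_k)$ therefore equals $d\mu$, and weak$^*$ compactness of $\mathfrak{M}_c$ forces the entire sequence to converge: $d\mu_k\to d\mu$ in the weak$^*$ topology.
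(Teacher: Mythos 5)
Your proof is correct, and it takes a genuinely different route from the paper's. You extract a weak\textsuperscript{$*$} limit point $d\nu$ of $(d\mu_k)$ using the compactness of $\mathfrak{M}_c$, identify the absolutely continuous part of $d\nu$ by testing against $\varphi\in C(K)$ supported away from the finite zero set $\zeros(\hat Q)$ (where $P_k/Q_k\to P/\hat Q$ uniformly and $\supp d\hat\mu_k$ eventually avoids $\supp\varphi$), and then pin down the atomic part by invoking the uniqueness of $d\hat\mu$ from Proposition~\ref{linearindependenceprop} together with moment matching. The paper instead proves convergence directly for an \emph{arbitrary} $f\in C(K)$: it uses Condition~\ref{condition4} to find $\rho=r'\alpha$ agreeing with $f$ on $\zeros(\hat Q)$, so that the moment equations give $\int_K\rho\,d\mu_k=\int_K\rho\,d\mu=\langle c,r\rangle$ exactly, and then bounds $|\int_K g\,(d\mu_k-d\mu)|$ for $g:=f-\rho$ by an explicit $\epsilon$--$\delta$ split of $K$ into a small tube $B_\delta$ around $\zeros(\hat Q)$ and its complement. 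So both arguments use continuity of $g^c$ (Theorem~\ref{uniqueminthm}) and Condition~\ref{condition4}, but you spend the linear-independence hypothesis on a \emph{uniqueness-of-limit} argument, whereas the paper spends it on a \emph{test-function decomposition} $f=\rho+g$. The paper's route is more elementary (no extraction of subsequences, directly quantitative), while yours is conceptually cleaner since it reduces the claim to a uniqueness statement already established and makes the structure of the limit transparent. One small point worth making explicit in your write-up: the ``every subsequential limit equals $d\mu$, hence the whole sequence converges'' step relies on $\mathfrak{M}_c$ being a compact \emph{metrizable} space in the weak\textsuperscript{$*$} topology, which holds here because $\mathfrak{M}_c$ is norm-bounded and $C(K)$ is separable.
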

 
\begin{proof}
We want to show that   
\begin{equation*}
\int_Kfd\mu_k \to \int_K fd\mu
\end{equation*}
for an arbitrary $f\in C(K)$. We may choose $\rho(x)=r'\alpha(x)$ so that $\rho(x)=f(x)$ for $x\in {\rm null} (\hat Q)$.  In fact, since  $\alpha(x_1),\alpha(x_2),\dots,\alpha(x_m)$ are linearly independent (Condition~\ref{condition4}), it follows that the system of linear equations  
\begin{displaymath}
r'\begin{bmatrix}\alpha(x_1)&\alpha(x_2)&\cdots&\alpha(x_m)\end{bmatrix}=\begin{bmatrix}f(x_1)&f(x_2)&\cdots&f(x_m)\end{bmatrix}
\end{displaymath}
has a  solution. Then, setting $g=f-\rho$,   ${\rm null}(\hat Q)\subset {\rm null}(g)$. Moreover, since $d \mu_k$ and $d \mu$ both satisfy the moment condition \eqref{vectormoment}, we have 
$\int_K\rho\, d \mu_k=\int_K\rho \, d \mu = \langle c, r\rangle$, and hence it is sufficient to show that 
 \begin{equation}\label{toshown}
\int_Kg\, d\mu_k \to \int_K g\, d\mu.
\end{equation}
Next, fix $\epsilon>0$, and choose $M$ so that $\sup_k \mu_k(K)\le M$ and $ \mu(K)\le M$. Let $B_\delta :=\{x_0+x_1\in K\mid x_0\in{\rm null}(\hat Q),\,\|x_1\|_2<\delta\}$, where $\delta>0$ is chosen so that $|g(x)|<\epsilon/(2M)$ on  $B_\delta$, which can be done since $g$ is  continuous and $g(x)=0$ on ${\rm null}(\hat Q)$. By Theorem~\ref{thm:mainMoment}, ${\rm supp}(d\hat \mu) \subset B_\delta$.  Also,  since $\hat Q_k\to \hat Q$ uniformly (Theorem \ref{uniqueminthm}), it follows that,  for $k$ sufficiently large,  ${\rm null}(\hat Q_k) \subset B_\delta$, and hence,  ${\rm supp}(d\hat \mu_k) \subset B_\delta$ (Theorem~\ref{thm:mainMoment}). Thus for $k$ large enough, we have
\begin{align}
&\left|\int_K g(d\mu_k-d\mu)\right|=\left|\int_{B_\delta} g(d\mu_k-d\mu)+\int_{K\setminus B_\delta} g(P_k/\hat{Q}_k-P/\hat{Q})dx\right|\nonumber \\
&\qquad \le2M\max_{x\in B_\delta} |g(x)|+\int_{K\setminus B_\delta} |g| dx \max_{x\in K\setminus B_\delta}|P_k/\hat{Q}_k-P/\hat Q|\label{eq:TermLimit2}.
\end{align}
The first term is bounded by $\epsilon$ (by the definition of $B_\delta$) and the second term tends to zero since $P_k/\hat{Q}_k\to P/\hat{Q}$ uniformly on $K\setminus B_\delta$. Since $\epsilon>0$ is arbitrary, the limit  of $\left|\int_K g(d\mu_k-d\mu)\right|$ as $k\to \infty$ of \eqref{eq:TermLimit2} is zero, and weak$^*$ convergence $d\mu_k\to d\mu$ follows.
\end{proof}

In many classical one-dimensional moment problems, such as the power moment problem and the trigonometric moment problem, there will be cancellation of common factors in $P$ and $Q$. With more general basis functions this is not necessarily the case. In multidimensional  generalizations of the classical problems such cancelation may or may not occur. Example~\ref{2Dcancelationex} shows a situation where there is cancelation, whereas no cancellation occurs in the next simple example. 

\begin{ex}
Consider a two-dimensional power moment problem on $K = [0,1]^2$ with basis functions $\alpha(x)=(1, x_1, x_1^2, x_2, x_2^2, x_1x_2)'$. The polynomials $P(x) = x_1^2 + x_2^2$ and $Q(x) = x_1 + x_1^2 + x_2 + x_2^2$ are irreducible with a common zero in $x=(0,0)'$. Moreover,
\begin{displaymath}
\lim_{x \to 0} \frac{P(x)}{Q(x)} = 0,
\end{displaymath}
showing that it is integrable. Set  $c := \int_K\alpha\frac{P}{Q}dx$. Then $\langle c,r\rangle= \int_K R\frac{P}{Q}dx>0$ for all $r\in\bar{\mathfrak{P}}_+\setminus\{0\}$, so $c\in\mathfrak{C}_+$. Consequently, we have an example where $\hat{c}=0$ and both $p$ and $q$ belong to $\partial\mathfrak{P}_+$, but there is no cancellation. 
\end{ex}


%


\section{Moments on the boundary}\label{boundarymomentsec}

If $c\in\partial\mathfrak{C}_+$, there is a $q_0\in\bar{\mathfrak{P}}_+ \setminus \{0\}$ such that $\langle c,q_0\rangle =0$. Then $\mathbb{J}^c_{p}(\lambda q_0)\to -\infty$ as $\lambda\to \infty$. Consequently, the functional \eqref{eq:objFun} has no minimum.  Then $d\mu$ in \eqref{momenteqn} cannot have a rational part. In fact, if $d\mu$ is given by \eqref{complexity-constraint}, then 
\begin{equation}
\label{<c,q0>}
\langle c,q_0\rangle = \int_K Q_0\frac{P}{Q}dx +\int_K Q_0d\hat{\mu}=0.
\end{equation}
Since the first term is positive and the second in nonnegative, there could be no rational part in $d\mu$. Moreover, $d\hat{\mu}$ must have support in $\zeros(Q_0)$. More precisely, we have the following representation (c.f., Appendix A \cite{mcclellan1982multi-dimensional}).

\begin{prop}
Suppose that\/ ${\mathfrak P}_+\ne\emptyset$. Then, for any $c \in \partial \mathfrak{C}_+$, there exists a $d \mu\in\mathfrak{M}_c$ with support in at most $n-1$ points in $K$.
\end{prop}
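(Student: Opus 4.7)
The plan is to first establish that $\mathfrak{M}_c$ is nonempty by approximating $c$ from inside $\mathfrak{C}_+$ and then to reduce a generic solution to one with at most $n-1$ atoms by exploiting the dimension drop forced by the boundary condition. Since $c \in \partial\mathfrak{C}_+$, the definition \eqref{Cplus} yields a $q_0 \in \bar{\mathfrak{P}}_+ \setminus \{0\}$ with $\langle c, q_0\rangle = 0$. Set $Z := \{x \in K : Q_0(x) = 0\}$, which is compact, and observe that $\sum_{k=1}^n q_{0,k}\alpha_k \equiv 0$ on $Z$; hence the vectors $\{\alpha(x) : x \in Z\}$ all lie in the hyperplane $H := \{v \in \mathbb{R}^n : \langle v, q_0\rangle = 0\}$ of dimension $n-1$.

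For existence I would pick any $c_0 \in \mathfrak{C}_+$, available by Proposition~\ref{prop:C+notempty} since $\mathfrak{P}_+ \neq \emptyset$, together with any fixed $p \in \bar{\mathfrak{P}}_+ \setminus \{0\}$. For each $k \geq 1$ the vector $c_k := c + k^{-1}c_0$ lies in $\mathfrak{C}_+$, since $\langle c_k, r\rangle \geq k^{-1}\langle c_0, r\rangle > 0$ for every $r \in \bar{\mathfrak{P}}_+\setminus\{0\}$. Theorem~\ref{thm:mainMoment} then provides a nonnegative measure $d\mu_k$ of the form \eqref{complexity-constraint} satisfying $\int_K \alpha\, d\mu_k = c_k$. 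Fixing any $p^* \in \mathfrak{P}_+$ with $\varepsilon := \min_K P^* > 0$ yields $\langle c_k, p^*\rangle = \int_K P^* d\mu_k \geq \varepsilon \|\mu_k\|$, so the total variations are uniformly bounded. Banach-Alaoglu then delivers a weak$^*$ convergent subsequence $d\mu_k \rightharpoonup d\mu$, and continuity of each $\alpha_j$ gives $\int_K \alpha\, d\mu = c$, proving $\mathfrak{M}_c \neq \emptyset$.

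Next, any $d\mu \in \mathfrak{M}_c$ obeys $\int_K Q_0\, d\mu = \langle c, q_0\rangle = 0$, and since $Q_0 \geq 0$ on $K$ with $d\mu \geq 0$, this forces $\supp(d\mu) \subset Z$. The set $\mathfrak{M}_c$ is convex and weak$^*$-compact (by the same bound $\|\mu\| \leq \langle c, p^*\rangle / \varepsilon$ used in Proposition~\ref{dmusupportn}), so the Krein-Millman theorem furnishes an extreme point $d\mu^*$. I would then run the argument of Proposition~\ref{dmusupportn} with the dimension of the ambient moment image lowered by one: supposing for contradiction that $\supp(d\mu^*)$ contained at least $n$ distinct points, I would pick $n-1$ disjoint open neighborhoods around distinct support points and let the remainder form the $n$th piece, decomposing $d\mu^* = \sum_{k=1}^n d\mu^*_k$ into $n$ nonnegative measures with pairwise disjoint supports in $Z$ and positive total masses. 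Each column $\int_K \alpha\, d\mu^*_k$ then lies in $H$, so the $n\times n$ matrix with these columns has rank at most $n-1$; the resulting linear relation $\sum_k t_k \int_K \alpha\, d\mu^*_k = 0$ produces a nontrivial perturbation $d\mu^* \pm \delta \sum_k t_k d\mu^*_k \in \mathfrak{M}_c$ for small $\delta > 0$, contradicting extremality. Hence $\supp(d\mu^*)$ contains at most $n-1$ points.

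The main delicate point will be the approximation step. One must use Theorem~\ref{thm:mainMoment} (already proved in Section~\ref{sec:boundary}) rather than Theorem~\ref{c2muthm} to construct the approximating measures $d\mu_k$, since the very point of this section is to deduce Theorem~\ref{c2muthm}; likewise the reduction to an extreme point must be executed directly, without invoking Proposition~\ref{dmusupportn}, whose proof relies on Theorem~\ref{c2muthm}.
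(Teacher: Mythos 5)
Your proof is correct, and the core mechanism is the same as the paper's: the orthogonality $\langle c,q_0\rangle=0$ together with nonnegativity forces any representing measure to have support in $Z=\zeros(Q_0)$, which pushes all the moment vectors $\alpha(x)$ (for $x$ in the support) into the $(n-1)$-dimensional hyperplane $H=q_0^\perp$, and then a Carath\'eodory-type reduction on $n$ disjoint pieces produces the contradiction. Where you diverge from the paper is chiefly organizational. The paper's own proof simply invokes Proposition~\ref{dmusupportn} to get a discrete $d\mu=\sum_{\ell=1}^n\beta_\ell\delta_{x_\ell}\in\mathfrak{M}_c$, notes (tersely) that $0=p'\sum_\ell\alpha(x_\ell)\beta_\ell$ with $\beta_\ell>0$ and $P(x_\ell)\ge 0$ forces $P(x_\ell)=0$ for every $\ell$, so the $n$ vectors $\alpha(x_\ell)$ lie in $H$ and are linearly dependent, allowing one atom to be removed. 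This relies on Proposition~\ref{dmusupportn}, whose proof cites Theorem~\ref{c2muthm}, which is itself proved later in the same section (a legitimate but forward-referencing dependency, since the proof of Theorem~\ref{c2muthm} is based only on Theorem~\ref{thm:mainMoment}). You instead re-derive the needed ingredients from scratch — nonemptiness of $\mathfrak{M}_c$ via a weak$^*$ limit of measures supplied by Theorem~\ref{thm:mainMoment}, then weak$^*$ compactness, then Krein--Milman, then the dimension-reduced Carath\'eodory step run on an arbitrary extreme point rather than on a pre-supplied $n$-atom measure. This buys you full self-containedness (no appeal to the out-of-order Proposition~\ref{dmusupportn}/Theorem~\ref{c2muthm} chain) at the cost of duplicating the approximation argument that the paper gives once, after this proposition, as the proof of Theorem~\ref{c2muthm}. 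Both are valid; your version is more careful about logical ordering, while the paper's is shorter by factoring the existence and compactness work through the already-stated (if not yet proved) Proposition~\ref{dmusupportn}.
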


\begin{proof}
Since $c\in\bar{\mathfrak C}_+$, it follows from Proposition~\ref{dmusupportn} that there is a $d \mu\in\mathfrak{M}_c$ with support in at most  $n$ points $x_1,x_2,\dots,x_n$. Then 
$d\mu=\sum_{\ell=1}^{n}\beta_\ell d\mu_\ell$ for some nonnegative coefficients $\beta_\ell$, where $d\mu_\ell=\delta(x-x_\ell)$ is the Dirac measure, for $\ell=1,2,\dots,n$. If $\beta_\ell=0$ for some $\ell$, then $d\mu$ has support in at most $n-1$ points, so only the case that $\beta_\ell>0$  for $\ell=1,2,\dots,n$, remains.  Now, since $c\in\partial{\mathfrak C}_+$, there is a $p\in \bar{\mathfrak P}_+\setminus\{0\}$ such that $\langle c,p \rangle=0$, i.e.,
\begin{displaymath}
0=\langle c,p \rangle = p'\int_K \alpha \sum_{\ell=1}^n \beta_\ell d\mu_\ell=p'\sum_{\ell=1}^n \alpha(x_\ell) \beta_\ell
\end{displaymath}
and hence $\{\alpha(x_1),\ldots, \alpha(x_n)\}$ are linearly dependent. Therefore one of the measures $d\mu_\ell$ can be eliminated in the representation 
\begin{displaymath}
c=\int_K \alpha \sum_{\ell=1}^n \beta_\ell d\mu_\ell=\sum_{\ell=1}^n\alpha(x_\ell) \beta_\ell,
\end{displaymath}
proving that only a measure $d\mu$ with support in $n-1$ is needed. 
\end{proof}

Next we provide the deferred proof of Theorem~\ref{c2muthm}, which is based on Theorem~\ref{thm:mainMoment} only.

\begin{proof}[Proof of Theorem~\ref{c2muthm}]
From Theorem~\ref{thm:mainMoment} we know that a solution of \eqref{vectormoment} exists for all $c \in \mathfrak{C}_+$. Hence it just remains to show that there is a solution for all $c \in \partial \mathfrak{C}_+$. 
For any $c \in \partial \mathfrak{C}_+$, let $c_\varepsilon = c + \varepsilon c_0$,  where $c_0 \in \mathfrak{C}_+$. The existence of such a $c_0$ is insured by Proposition~\ref{prop:C+notempty} since $\mathfrak{P}_+ \neq \emptyset$.  Then $c_\varepsilon\in\mathfrak{C}_+$ for all $\varepsilon > 0$. Therefore, by Theorem~\ref{thm:mainMoment} and the fact that $\mathfrak{P}_+ \neq \emptyset$, we know that there exist a bounded measure $d \mu_\varepsilon\in C(K)^*$ of the form  $d \mu_\varepsilon = (P/Q_\varepsilon) dx + d\hat{\mu}_\varepsilon$ such that
\begin{displaymath}
c_\varepsilon = \int_K \alpha d\mu_\varepsilon
\end{displaymath}
for some $p \in \bar{\mathfrak{P}}_+ \setminus \{0\}$. Now, since $\mathfrak{P}_+ \neq \emptyset$, there is a $p_0\in\mathfrak{P}_+$, and there is a $\delta >0$ such that $P_0(x)\geq\delta$ for all $x\in K$.   Then
\begin{displaymath}
\langle c_\varepsilon, p_0 \rangle = \int_K P_0 d\mu_\varepsilon \geq \mu_\varepsilon(K)\delta,
\end{displaymath}
However, $\langle c_\varepsilon, p_0 \rangle=\langle c, p_0 \rangle +\varepsilon\langle c_0, p_0 \rangle \leq M$ for some $M$ and sufficiently small $\varepsilon$. Hence $\mu_\varepsilon(K)\leq M/\delta$ for all such $\varepsilon$. Therefore there is a subsequence $\{d\mu_{\varepsilon_j}\}$ that converges to some $d\mu$ in $\text{weak}^*$ \cite[p. 128]{Luenberger}, \cite[p. 246]{rudin1987real}, and consequently 
\begin{displaymath}
c = \lim_{\varepsilon \rightarrow 0} c_\varepsilon = \lim_{j\to\infty}  \int_K \alpha d\mu_{\varepsilon_j}= \int_K \alpha d\mu
\end{displaymath}
which proves Theorem~\ref{c2muthm}.
\end{proof}

\section{Appendix}\label{appendix}

The following lemma, showing that $\mathbb{J}^c_p$ is bounded from below,  is a direct generalization of Proposition 2.1 in \cite{byrnes2006thegeneralized}, and the proof follows along the same lines as in  \cite{byrnes2006thegeneralized}. 

\begin{lem}\label{lem:linLog}
Let $(c,p)\in\mathfrak{C}_+\times \bar{\mathfrak{P}}_+\setminus\{0\}$. Then there are constants $\epsilon_c, \, \epsilon_p > 0$ such that 
\begin{equation}\label{Jestimate}
\mathbb{J}^c_p(q) \geq \epsilon_c \|Q\|_\infty - \epsilon_p \log\|Q\|_\infty 
\end{equation}
for all $q\in \bar{\mathfrak{P}}_+\setminus\{0\}$.
\end{lem}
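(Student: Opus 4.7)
The plan is to bound the two summands of $\mathbb{J}^c_p(q)=\langle c,q\rangle - \int_K P\log Q\,dx$ separately, producing the linear term from the inner product and the logarithmic correction from the integral. Since both terms of the target estimate are positively homogeneous of degree one in the appropriate sense, a compactness argument on a suitable slice of the cone $\bar{\mathfrak{P}}_+$ combined with a trivial pointwise majorization should do the job.

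For the linear term, I would first observe that, by the assumed linear independence of $\alpha_1,\dots,\alpha_n$, the map $q\mapsto\|Q\|_\infty$ is a norm on $\mathbb{R}^n$ (it is clearly a seminorm, and it vanishes only when $Q\equiv 0$, i.e.\ $q=0$). Consequently the set $S:=\{q\in\bar{\mathfrak{P}}_+\mid \|Q\|_\infty=1\}$ is a closed bounded subset of $\mathbb{R}^n$, hence compact. On $S$ the continuous linear functional $q\mapsto\langle c,q\rangle$ is strictly positive, because $c\in\mathfrak{C}_+$ and $0\notin S$; by compactness it attains a positive minimum $\epsilon_c>0$. Homogeneity then upgrades this to $\langle c,q\rangle\geq\epsilon_c\|Q\|_\infty$ for every $q\in\bar{\mathfrak{P}}_+$.

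For the integral term, the pointwise bound $Q(x)\leq\|Q\|_\infty$ on $K$ (valid because $Q\geq 0$ on $K$) implies $\log Q(x)\leq\log\|Q\|_\infty$ for every $x\in K$, with the usual convention $\log 0=-\infty$ at the zero locus of $Q$. Multiplying this inequality by $P(x)\geq 0$ and integrating yields
\[
\int_K P(x)\log Q(x)\,dx \;\leq\; \log\|Q\|_\infty \int_K P\,dx,
\]
independently of the sign of $\log\|Q\|_\infty$. Setting $\epsilon_p:=\int_K P\,dx$ gives $-\int_K P\log Q\,dx\geq -\epsilon_p\log\|Q\|_\infty$, and $\epsilon_p>0$ because $P\not\equiv 0$ and the zero locus of $P$ has measure zero (by the standing assumption on $\bar{\mathfrak{P}}_+$). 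Adding the two lower bounds gives \eqref{Jestimate}.

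The one step I expect to require a bit of care is the strict positivity of the minimum of $\langle c,q\rangle$ on $S$, which is ultimately just a compactness-plus-continuity argument but relies crucially on the definition of $\mathfrak{C}_+$ being stated with $\bar{\mathfrak{P}}_+\setminus\{0\}$ (not only $\mathfrak{P}_+$), so that $S\subset\bar{\mathfrak{P}}_+\setminus\{0\}$ is included in the set on which $\langle c,\cdot\rangle$ is already known to be strictly positive. Everything else is a one-line pointwise estimate.
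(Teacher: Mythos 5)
Your proof is correct and follows essentially the same strategy as the paper's: a compactness-plus-homogeneity argument to get $\langle c,q\rangle\geq\epsilon_c\|Q\|_\infty$, and the pointwise bound $\log Q\le\log\|Q\|_\infty$ multiplied by $P\geq 0$ to control the integral. The only cosmetic difference is that you normalize on the slice $\{\|Q\|_\infty=1\}$ (after noting $q\mapsto\|Q\|_\infty$ is a norm by linear independence), whereas the paper normalizes on $\{\|q\|_\infty=1\}$ and then inserts the extra constant $M=\max_{k,x}|\alpha_k|$ to convert between the coefficient norm and the sup norm; your version is marginally cleaner but the content is identical.
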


\begin{proof}
The linear form $\langle c,q\rangle$ has a minimum, $m_c$, on the compact set $\{q \in\bar{\mathfrak{P}}_+ \mid \|q\|_\infty = 1\}$. Since $c \in \mathfrak{C}_+$, $m_c>0$.  Hence for an arbitrary $q \in\bar{\mathfrak{P}}_+\setminus\{0\}$ we have
\begin{displaymath}
\langle c, q \rangle =  \langle c, \frac{q}{\|q\|_\infty} \rangle \|q\|_\infty \geq m_c\|q\|_\infty.
\end{displaymath}
Next we observe that 
\begin{displaymath}
\|Q\|_\infty = \max_{x \in K} Q = \max_{x \in K} \left\{ \sum_{k = 0}^{n} q_k \alpha_k\right\}
\leq \sum_{k = 0}^{n} |q_k| \, |\alpha_k| \leq  M \|q\|_\infty ,
\end{displaymath}
where $M:=\max_{k,x} |\alpha_k|$. This maximum exists and is positive and finite, since the basis functions $\alpha_1,\alpha_2,\dots,\alpha_n$ are continuous and $K$ is compact. Consequently, taking $\epsilon_c := m_c/M>0$, we obtain
\begin{displaymath}
\langle c, q \rangle \geq \epsilon_c \|Q\|_\infty.
\end{displaymath}
Next we consider the integral part of $\mathbb{J}^c_p$, namely
\begin{displaymath}
\int_{K} P \log Q dx = \int_{K} P \log\left(\frac{Q}{\|Q\|_\infty}\right) dx +  \log\|Q\|_\infty\int_{K} Pdx.
\end{displaymath}
Since $p\in\bar{\mathfrak{P}}_+\setminus\{0\}$ is fixed, $\epsilon_p:=\int_{K} Pdx >0$. Therefore
\begin{displaymath}
\mathbb{J}_p^c(q) \geq \epsilon_c \|Q\|_\infty - \int_{K} P \log\left(\frac{Q}{\|Q\|_\infty}\right) dx - \epsilon_p \log\left( \|Q\|_\infty \right).
\end{displaymath}
However, the integrand in the second term is nonpositive, and hence \eqref{Jestimate} follows. 
\end{proof}

\begin{lem}\label{lem:Jcontinuity}
Let $c\in\mathfrak{C}_+$, and let $\mathbb{J}^c_p(q):\,\bar{\mathfrak{P}}_+\to\mathbb{R}\cup\{\infty\}$ be the functional \eqref{eq:objFun}.  Then the optimal value $\mathbb{J}^c_p(\hat q)=\min_{q \in \bar{ \mathfrak{P} }_+} \mathbb{J}^c_p(q)$ is continuous in $p$ over $\bar{\mathfrak{P}}_+\setminus\{0\}$.
\end{lem}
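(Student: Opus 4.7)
The plan is to prove continuity by showing that the value function
$V(p):=\min_{q\in\bar{\mathfrak{P}}_+}\mathbb{J}^c_p(q)$
is both lower and upper semi-continuous on $\bar{\mathfrak{P}}_+\setminus\{0\}$, and then combine the two.

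For \emph{lower semi-continuity}, I would first upgrade Lemma~\ref{lowersemicontlem} to \emph{joint} lower semi-continuity of the map $(p,q)\mapsto\mathbb{J}^c_p(q)$ on $(\bar{\mathfrak{P}}_+\setminus\{0\})\times\bar{\mathfrak{P}}_+$. For $(p_k,q_k)\to(p,q)$, pick an $M$ that dominates all $\|Q_k\|_\infty$ and $\|Q\|_\infty$ (possible since $q_k$ converges and the basis is continuous on the compact set $K$), and split
\[
\int_K P_k \log Q_k \,dx = \int_K P_k \log(Q_k/M)\,dx + (\log M)\int_K P_k\,dx.
\]
The integrand $-P_k\log(Q_k/M)$ is nonnegative and converges pointwise to $-P\log(Q/M)$ on the full-measure set $\{Q>0\}$, so Fatou's lemma yields $\limsup_k\int_K P_k\log Q_k\,dx \le \int_K P\log Q\,dx$, while the remaining term is continuous in $p$ by uniform convergence $P_k\to P$; combined with $\langle c,q_k\rangle\to\langle c,q\rangle$, this gives the joint lower semi-continuity. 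Now fix $p_k\to p$ and let $\hat q_k$ minimize $\mathbb{J}^c_{p_k}$. For any fixed $q_0\in\mathfrak{P}_+$ the inequality $V(p_k)\le\mathbb{J}^c_{p_k}(q_0)$ and continuity of the right-hand side in $p_k$ gives $\sup_k V(p_k)<\infty$; together with the coercive bound of Lemma~\ref{lem:linLog} and the convergence $\epsilon_{p_k}=\int_K P_k\,dx\to\epsilon_p>0$, this forces $(\hat q_k)$ to remain in a compact subset of $\bar{\mathfrak{P}}_+$. Extracting a subsequence $\hat q_{k_j}\to q^\ast\in\bar{\mathfrak{P}}_+$ and applying joint lower semi-continuity gives
$V(p)\le\mathbb{J}^c_p(q^\ast)\le\liminf_j\mathbb{J}^c_{p_{k_j}}(\hat q_{k_j})=\liminf_j V(p_{k_j})$,
and the standard subsequence-of-subsequence argument promotes this to $V(p)\le\liminf_k V(p_k)$.

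For \emph{upper semi-continuity}, let $\hat q$ denote the unique minimizer of $\mathbb{J}^c_p$ (Theorem~\ref{uniqueminthm}) and fix any $q_0\in\mathfrak{P}_+$. For $t\in(0,1)$ the convex combination $q^t:=(1-t)\hat q+tq_0$ lies in $\mathfrak{P}_+$ since $Q^t\ge tQ_0>0$ on $K$, so $\log Q^t$ is bounded. Dominated convergence then gives $\mathbb{J}^c_{p_k}(q^t)\to\mathbb{J}^c_p(q^t)$, and $V(p_k)\le\mathbb{J}^c_{p_k}(q^t)$ for each $k$, so using the convexity of $\mathbb{J}^c_p$ (Lemma~\ref{striclyconvexlem}),
\[
\limsup_k V(p_k)\le\mathbb{J}^c_p(q^t)\le(1-t)V(p)+t\,\mathbb{J}^c_p(q_0).
\]
Letting $t\to 0^+$ yields $\limsup_k V(p_k)\le V(p)$, completing the argument.

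The main obstacle is the non-integrable singularity of $\log Q$ on the zero locus of $Q$ when the minimizer lies in $\partial\mathfrak{P}_+$: a direct dominated convergence argument in $p$ is unavailable because $\log\hat Q$ need not be in $L^1(K)$. This is side-stepped in the lower semi-continuity direction by Fatou's lemma applied to the nonnegative renormalized integrand, and in the upper semi-continuity direction by approximating the boundary minimizer $\hat q$ with interior convex combinations $q^t\in\mathfrak{P}_+$, where the logarithm is safely bounded.
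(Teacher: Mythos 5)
Your proof is correct, but it takes a genuinely different route than the paper's. The paper's argument is a direct, symmetric, two-sided estimate: for two parameters $p_1,p_2$ with respective minimizers $q_1,q_2$ and an auxiliary $q_0\in\mathfrak{P}_+$, it uses the cross-inequalities $\mathbb{J}^c_{p_1}(q_1)\le\mathbb{J}^c_{p_1}(q_2+\varepsilon q_0)$ and $\mathbb{J}^c_{p_2}(q_2)\le\mathbb{J}^c_{p_2}(q_1+\varepsilon q_0)$, and then bounds $\lvert\mathbb{J}^c_{p_2}(q_1+\varepsilon q_0)-\mathbb{J}^c_{p_1}(q_1)\rvert$ explicitly via $\log(1+x)\le x$ together with the finiteness of $\int_K P_1\,Q_0/Q_1\,dx$ (a consequence of the KKT conditions at the minimizer $q_1$), choosing $\varepsilon$ first and then $\|p_2-p_1\|_1$ small enough. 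In contrast, you split continuity into lower and upper semi-continuity: for lsc you upgrade Lemma~\ref{lowersemicontlem} to joint lsc in $(p,q)$ via Fatou on the renormalized nonnegative integrand $-P_k\log(Q_k/M)$ and pair this with the uniform coercivity supplied by Lemma~\ref{lem:linLog} (noting $\epsilon_c$ depends only on $c$ and $\epsilon_{p_k}\to\epsilon_p>0$) to extract convergent subsequences of minimizers; for usc you approximate the possibly boundary minimizer $\hat q$ from inside by the segment $q^t=(1-t)\hat q + t q_0$, where $\log Q^t$ is bounded so the integral term converges trivially, and then invoke the convexity of $\mathbb{J}^c_p$ from Lemma~\ref{striclyconvexlem} to push $t\to0^+$. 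Your approach is more structural — it reuses the lemmas already in place (\ref{lowersemicontlem}, \ref{sublevelsetlem}, \ref{lem:linLog}, \ref{striclyconvexlem}) in the standard lsc/usc pattern for value-function continuity, and it sidesteps the need to certify finiteness of $\int_K P_1Q_0/Q_1\,dx$ at a potentially boundary $q_1$. The paper's approach buys a slightly more quantitative statement (an explicit $\delta$--$\varepsilon$ bound) but at the cost of a somewhat delicate justification that the cross-terms are finite. Both are valid proofs of the same sequential continuity on the metric space $\bar{\mathfrak{P}}_+\setminus\{0\}$.
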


\begin{proof}
Let $p_1, p_2\in\bar{\mathfrak{P}}_+\setminus\{0\}$ be arbitrary, and let $q_1, q_2\in\bar{\mathfrak{P}}_+$ be the unique minimizers of $\mathbb{J}^c_{p_1}$ and $\mathbb{J}^c_{p_2}$, respectively. Choose a $q_0 \in \mathfrak{P}_+$ such that $0 < \|Q_0\|_\infty < \infty$. Then, by optimality,
\begin{subequations}\label{optimalitycond}
\begin{eqnarray}
&\mathbb{J}^c_{p_1}(q_1)\leq \mathbb{J}^c_{p_1}(q_2+\varepsilon q_0)& \\
&\mathbb{J}^c_{p_2}(q_2)\leq \mathbb{J}^c_{p_2}(q_1+\varepsilon q_0)&
\end{eqnarray}
\end{subequations}
for all $\varepsilon >0$.  Therefore, if we could show that, for any $\delta >0$,
\begin{subequations}\label{variation}
\begin{eqnarray}
&|\mathbb{J}^c_{p_2}(q_1+\varepsilon q_0) - \mathbb{J}^c_{p_1}(q_1)|\leq \delta & \label{variation1}\\
&|\mathbb{J}^c_{p_1}(q_2+\varepsilon q_0)-\mathbb{J}^c_{p_2}(q_2)|\leq \delta &\label{variation2}
\end{eqnarray}
\end{subequations}
for $\|p_2 -p_1\|$ sufficiently small, we would have 
\begin{displaymath}
\mathbb{J}^c_{p_2}(q_2)-\delta\leq \mathbb{J}^c_{p_1}(q_1)\leq \mathbb{J}^c_{p_2}(q_2)+\delta,
\end{displaymath}
and the lemma would follow. To prove this, form
\begin{displaymath}
\begin{split}
&|\mathbb{J}^c_{p_2}(q_1+\varepsilon q_0) - \mathbb{J}^c_{p_1}(q_1)|\\
&\phantom{xx}=\left|\langle c, \varepsilon q_0 \rangle - \int_{K} P_2 \log(Q_1 + \varepsilon Q_0) dx+\int_{K} P_1 \log Q_1 dx \right|\\
&\phantom{xx}=\left|\langle c, \varepsilon q_0 \rangle - \int_{K} P_1 \log\left(1 + \frac{\varepsilon Q_0}{Q_1}\right) dx  - \int_{K} (P_2 - P_1) \log(Q_1 + \varepsilon Q_0) dx\right| \\
&\phantom{xx}\leq \varepsilon \left ( \langle c, q_0 \rangle + \int_{K} P_1 \frac{Q_0}{Q_1} dx \right)
+ \|P_2 - P_1\|_1 \|\log(Q_1 + \varepsilon Q_0)\|_\infty,
\end{split}
\end{displaymath}
where, by optimality of $q_1$ with respect to $\mathbb{J}^c_{p_1}$,  $\int_{K} P_1 \frac{Q_0}{Q_1} dx=\langle c,q_0\rangle$ is finite. 
Here we can make the first term less or equal to $\delta/2$ by choosing $\varepsilon$ sufficiently small. Then, set $K(\varepsilon):=2\|\log(Q_1 + \varepsilon Q_0)\|_\infty$, and take $\|P_2 - P_1\|_1\leq \delta/K(\varepsilon)$, from which \eqref{variation1} follows. The inequality \eqref{variation2} follows by the same line of argument. 
\end{proof}

\begin{prop}\label{prop:p2qhatcontinuity}
Let $\mathbb{J}^c_p(q):\,\bar{\mathfrak{P}}_+\to\mathbb{R}\cup\{\infty\}$ be the functional \eqref{eq:objFun}. Then the function
\begin{equation*}
\hat{q} = \argmin_{q \in \bar{ \mathfrak{P} }_+} \mathbb{J}^c_p(q)
\end{equation*}
that maps $p \in \bar{ \mathfrak{P} }_+ \setminus \{ 0 \}$ to $\hat{q} \in \bar{ \mathfrak{P} }_+$ is  continuous for all values of $c \in \mathfrak{C}_+$. 
\end{prop}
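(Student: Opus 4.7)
The plan is the standard ``bounded subsequence plus unique limit'' argument: fix $c\in\mathfrak{C}_+$, take an arbitrary sequence $p_k\to p$ in $\bar{\mathfrak P}_+\setminus\{0\}$ and show that every subsequence of $\hat q_k:=g^c(p_k)$ has a further subsequence that converges to $g^c(p)$. By uniqueness of the minimizer (Theorem~\ref{uniqueminthm}) and a standard subsequence-of-subsequence argument this will imply $g^c(p_k)\to g^c(p)$.

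\emph{Step 1: uniform boundedness of $\{\hat q_k\}$.} By Lemma~\ref{lem:Jcontinuity} the sequence $\mathbb{J}^c_{p_k}(\hat q_k)$ converges, hence is bounded above by some $M$. Applying Lemma~\ref{lem:linLog} to each $p_k$ gives
\begin{equation*}
M\;\ge\;\mathbb{J}^c_{p_k}(\hat q_k)\;\ge\;\epsilon_c\|\hat Q_k\|_\infty-\epsilon_{p_k}\log\|\hat Q_k\|_\infty,
\end{equation*}
where $\epsilon_c$ depends only on $c$ and where $\epsilon_{p_k}=\int_K P_k\,dx$ stays in a bounded interval of $(0,\infty)$ because $p_k\to p\ne 0$. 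Comparing linear and logarithmic growth yields a uniform bound on $\|\hat Q_k\|_\infty$, and since $q\mapsto\|Q\|_\infty$ is a norm on $\mathbb{R}^n$ (by linear independence of the basis), $\{\hat q_k\}$ is bounded in $\mathbb{R}^n$.

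\emph{Step 2: identifying subsequential limits.} By Bolzano--Weierstrass, a subsequence $\hat q_{k_j}$ converges to some $q^\ast\in\bar{\mathfrak P}_+$; we must show $q^\ast=\hat q:=g^c(p)$. The linear part $\langle c,\hat q_{k_j}\rangle\to\langle c,q^\ast\rangle$ by continuity. For the logarithmic part, since $\{\|\hat Q_{k_j}\|_\infty\}$ is bounded by some $M'$, the integrands $-P_{k_j}\log(\hat Q_{k_j}/M')$ are nonnegative; they converge pointwise to $-P\log(Q^\ast/M')$ (using uniform convergence $P_{k_j}\to P$ and $\hat Q_{k_j}\to Q^\ast$ on $K$, together with the convention $0\log 0=0$ and the hypothesis that the zero locus of $P$ has measure zero). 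Fatou's lemma, exactly as in Lemma~\ref{lowersemicontlem}, then gives
\begin{equation*}
-\int_K P\log(Q^\ast/M')\,dx\;\le\;\liminf_{j\to\infty}\Bigl(-\int_K P_{k_j}\log(\hat Q_{k_j}/M')\,dx\Bigr),
\end{equation*}
which, combined with convergence of the linear term, rearranges to $\mathbb{J}^c_p(q^\ast)\le\liminf_j \mathbb{J}^c_{p_{k_j}}(\hat q_{k_j})$.

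\emph{Step 3: closing the argument.} By Lemma~\ref{lem:Jcontinuity} the right-hand side equals $\mathbb{J}^c_p(\hat q)$, so $\mathbb{J}^c_p(q^\ast)\le\mathbb{J}^c_p(\hat q)$. Since $\hat q$ is the unique minimizer of $\mathbb{J}^c_p$ over $\bar{\mathfrak P}_+$ (Theorem~\ref{uniqueminthm}), we conclude $q^\ast=\hat q$. Every convergent subsequence of the bounded sequence $\{\hat q_k\}$ therefore has the same limit $\hat q$, which forces $\hat q_k\to\hat q=g^c(p)$ and proves continuity of $g^c$ on $\bar{\mathfrak P}_+\setminus\{0\}$.

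The main technical obstacle is Step 2: one must apply Fatou with both $P_k$ and $\hat Q_k$ varying and with $\hat Q_k$ possibly approaching zero on sets of positive measure. The key trick is the normalization by the uniform bound $M'$ on $\|\hat Q_k\|_\infty$, which makes the integrands uniformly nonpositive before taking $-1$ times; everything else is bookkeeping once Lemma~\ref{lem:Jcontinuity} supplies convergence of the optimal values.
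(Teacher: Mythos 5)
Your proof is correct, but it takes a genuinely different route from the paper's. Both arguments open the same way: extract a convergent subsequence of the minimizers via a uniform bound (you chain Lemma~\ref{lem:Jcontinuity} with Lemma~\ref{lem:linLog}; the paper invokes Lemma~\ref{sublevelsetlem}, which reduces to the same estimate), and both close the same way, using Theorem~\ref{uniqueminthm} to identify the subsequential limit with $\hat q$. The divergence is in the middle step. You prove, in effect, that $(p,q)\mapsto\mathbb{J}^c_p(q)$ is jointly lower semi-continuous on $(\bar{\mathfrak P}_+\setminus\{0\})\times\bar{\mathfrak P}_+$ by normalizing with a uniform bound $M'$ on $\|\hat Q_{k_j}\|_\infty$ and applying Fatou, exactly mimicking Lemma~\ref{lowersemicontlem} but now with both $P_{k_j}$ and $\hat Q_{k_j}$ varying; this delivers $\mathbb{J}^c_p(q^\ast)\le\liminf_j\mathbb{J}^c_{p_{k_j}}(\hat q_{k_j})=\mathbb{J}^c_p(\hat q)$ directly. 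The paper instead perturbs $q_k\mapsto q_k+\varepsilon q_0$ to move strictly into $\mathfrak{P}_+$, where $(p,q)\mapsto\mathbb{J}^c_p(q)$ is jointly continuous without any semicontinuity argument, passes to the limit there, and only at the end lets $\varepsilon\to 0$ (via monotone convergence) to reach a contradiction. Your approach is arguably more direct and avoids the contradiction structure, at the cost of the pointwise-a.e.-convergence bookkeeping on the measure-zero set where $P$ and $Q^\ast$ vanish simultaneously; the paper's perturbation sidesteps that delicacy entirely by never leaving the open cone until the final limit. One small point worth tightening in your Step~1: you need not just that $\epsilon_{p_k}=\int_K P_k\,dx$ stays bounded away from $0$, but that it is bounded \emph{above} — that is what makes the linear term dominate the logarithmic one in \eqref{Jestimate}; both follow from $p_k\to p\neq 0$, so the conclusion stands, but state the upper bound explicitly.
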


\begin{proof}
Let $(p_k)$ be a sequence in $\mathfrak{P}_+$ converging to $p \in \bar{\mathfrak{P}}_+ \setminus \{0\}$ as $k\to\infty$. Moreover, let $q_k = \argmin_{q \in \bar{\mathfrak{P}}_+} \mathbb{J}^c_{p_k}(q)$ and $\hat{q} = \argmin_{q \in \bar{\mathfrak{P}}_+} \mathbb{J}^c_{p}(q)$. Then, by Lemma~\ref{sublevelsetlem}, the sequence $(q_k)$ is bounded, and hence there is a subsequence, which we also call $(q_k)$ converging to a limit  $q_\infty$.  Assume that $ q_\infty \neq \hat{q}$, and choose a $q_0 \in \mathfrak{P}_+$ such that $0 < ||Q_0||_\infty < \infty$. Then
\begin{displaymath}
\begin{split}
\mathbb{J}^c_{p_k}(q_k) &= \mathbb{J}^c_{p_k} (q_k + \varepsilon q_0) - \langle c, \varepsilon q_0 \rangle + \int_{K} P_k \log \left( \frac{Q_k + \varepsilon Q_0}{Q_k} \right)dx\\
&\geq \mathbb{J}^c_{p_k} (q_k + \varepsilon q_0) - \langle c, \varepsilon q_0 \rangle .
\end{split}
\end{displaymath}
Therefore, by Lemma~\ref{lem:Jcontinuity}, we have 
\begin{equation*}
\mathbb{J}^c_{p}(\hat{q}) = \lim_{k \rightarrow \infty} \mathbb{J}^c_{p_k}(q_k) \geq \lim_{k \rightarrow \infty} \mathbb{J}^c_{p_k}(q_k + \varepsilon q_0) - \varepsilon \langle c, q_0 \rangle.
\end{equation*}
However $q_k + \varepsilon q_0 \in \mathfrak{P}_+$, and, since $(p,q)\mapsto\mathbb{J}^c_p(p,q)$ is continuous in $\mathfrak{P}_+$, we obtain
\begin{equation}\label{Jpestimate}
\mathbb{J}^c_{p}(\hat{q}) \geq \lim_{k \to \infty} \mathbb{J}^c_{p_k}(q_k + \varepsilon q_0) - \varepsilon \langle c, q_0 \rangle = \mathbb{J}^c_{p}(q_\infty + \varepsilon q_0) - \varepsilon \langle c, q_0 \rangle
\end{equation}
Since the minimum $\min_{q \in \bar{ \mathfrak{P} }_+} \mathbb{J}^c_{p}(q)$ is unique (Theorem~\ref{uniqueminthm}), there is a $\delta > 0$ such that  $\mathbb{J}^c_{p}(q_\infty) - \delta > \mathbb{J}^c_{p}(\hat{q})$, which together with \eqref{Jpestimate} yields 
\begin{displaymath}
\mathbb{J}^c_{p}(q_\infty) - \delta >  \mathbb{J}^c_{p}(q_\infty + \varepsilon q_0) - \varepsilon \langle c, q_0 \rangle
\end{displaymath}
However by letting $\varepsilon \to 0$ we get $- \delta > 0$, which is a contradiction. Therefore $\lim_{k \to \infty} q_k = \hat{q}$, as claimed.
\end{proof}

\begin{prop}\label{prop:c2qhatcontinuity}
Let $\hat{q} := \argmin_{q \in \bar{ \mathfrak{P} }_+} \mathbb{J}^c_p(q)$, where $\mathbb{J}^c_p(q):\,\bar{\mathfrak{P}}_+\to\mathbb{R}\cup\{\infty\}$ is the functional \eqref{eq:objFun}. Then the function sending $c\in\mathfrak{C}_+$ to $\hat{q}$ is continuous for all values of $p \in \bar{ \mathfrak{P} }_+ \setminus \{ 0 \}$.
\end{prop}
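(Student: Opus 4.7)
The plan is to adapt the contradiction argument of Proposition~\ref{prop:p2qhatcontinuity}, exploiting the fact that $\mathbb{J}^c_p(q)$ depends on $c$ only through the linear term $\langle c,q\rangle$, which simplifies things substantially. First I would take a sequence $(c_k)\subset\mathfrak{C}_+$ with $c_k\to c$, and set $q_k=\argmin_{q\in\bar{\mathfrak{P}}_+}\mathbb{J}^{c_k}_p(q)$ and $\hat q=\argmin_{q\in\bar{\mathfrak{P}}_+}\mathbb{J}^c_p(q)$, both unique by Theorem~\ref{uniqueminthm}.

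The first step is to establish boundedness of $(q_k)$. Lemma~\ref{lem:linLog} gives
\[
\mathbb{J}^{c_k}_p(q_k)\ge \epsilon_{c_k}\|Q_k\|_\infty-\epsilon_p\log\|Q_k\|_\infty,
\]
where $\epsilon_{c_k}=m_{c_k}/M$ and $m_{c_k}=\min\{\langle c_k,q\rangle : q\in\bar{\mathfrak{P}}_+,\,\|q\|_\infty=1\}$. Since $m_c$ is the infimum of a continuous bilinear form over a compact set, it is continuous in $c$; and since $m_c>0$ at the limit point, we obtain a uniform lower bound $\epsilon_{c_k}\ge\bar\epsilon>0$ for all sufficiently large $k$. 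Fixing any $q_0\in\mathfrak{P}_+$, the optimality inequality $\mathbb{J}^{c_k}_p(q_k)\le\mathbb{J}^{c_k}_p(q_0)=\langle c_k,q_0\rangle-\int_K P\log Q_0\,dx$ converges to $\mathbb{J}^c_p(q_0)<\infty$, giving a uniform upper bound. Comparing linear and logarithmic growth in $\|Q_k\|_\infty$ then yields boundedness of $\|q_k\|$, so we can extract a subsequence with $q_k\to q_\infty\in\bar{\mathfrak{P}}_+$.

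Next I would show $q_\infty=\hat q$. Writing $\mathbb{J}^{c_k}_p(q)-\mathbb{J}^c_p(q)=\langle c_k-c,q\rangle$ and using optimality of $q_k$ for $c_k$,
\[
\mathbb{J}^{c_k}_p(q_k)\le\mathbb{J}^{c_k}_p(\hat q)=\mathbb{J}^c_p(\hat q)+\langle c_k-c,\hat q\rangle\longrightarrow\mathbb{J}^c_p(\hat q),
\]
so $\limsup_k\mathbb{J}^{c_k}_p(q_k)\le\mathbb{J}^c_p(\hat q)$. Conversely, $\mathbb{J}^{c_k}_p(q_k)=\mathbb{J}^c_p(q_k)+\langle c_k-c,q_k\rangle$; since $(q_k)$ is bounded and $c_k\to c$, the last term vanishes, and the lower semi-continuity of $\mathbb{J}^c_p$ (Lemma~\ref{lowersemicontlem}) gives $\liminf_k\mathbb{J}^c_p(q_k)\ge\mathbb{J}^c_p(q_\infty)$. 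Combining,
\[
\mathbb{J}^c_p(q_\infty)\le\liminf_k\mathbb{J}^{c_k}_p(q_k)\le\limsup_k\mathbb{J}^{c_k}_p(q_k)\le\mathbb{J}^c_p(\hat q),
\]
so $q_\infty$ minimizes $\mathbb{J}^c_p$. By uniqueness (Theorem~\ref{uniqueminthm}), $q_\infty=\hat q$. Since every convergent subsequence of the bounded sequence $(q_k)$ shares this limit, the full sequence converges to $\hat q$, proving continuity.

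The only technical obstacle I anticipate is securing the uniform lower bound $\epsilon_{c_k}\ge\bar\epsilon$; this relies on the openness of $\mathfrak{C}_+$ together with the continuity of $c\mapsto m_c$, but is straightforward once the compact sublevel set $\{\|q\|_\infty=1\}\cap\bar{\mathfrak{P}}_+$ is identified. After that, the fact that $c$ enters $\mathbb{J}^c_p$ only linearly makes the limiting argument notably cleaner than in the $p$-continuity counterpart, removing the need for the $\varepsilon q_0$ perturbation used there.
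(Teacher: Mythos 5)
Your proof is correct, and it takes a genuinely different and in fact cleaner route than the paper's. The paper's proof mirrors that of Proposition~\ref{prop:p2qhatcontinuity}: it first establishes continuity of the optimal value $c\mapsto\mathbb{J}^c_p(\hat q)$ by adapting the estimate of Lemma~\ref{lem:Jcontinuity}, and then uses the $\varepsilon q_0$ perturbation together with joint continuity of $\mathbb{J}^c_p$ on $\mathfrak{C}_+\times\mathfrak{P}_+$ and a contradiction argument. You instead exploit the structural fact that $c$ enters $\mathbb{J}^c_p$ only through the linear term, so that $\mathbb{J}^{c_k}_p(q)-\mathbb{J}^c_p(q)=\langle c_k-c,\,q\rangle$ uniformly on bounded sets of $q$. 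This lets you sandwich $\mathbb{J}^{c_k}_p(q_k)$ between $\mathbb{J}^c_p(q_\infty)$ (via lower semi-continuity, Lemma~\ref{lowersemicontlem}) and $\mathbb{J}^c_p(\hat q)$ (via optimality and the vanishing linear perturbation), then invoke uniqueness of the minimizer. Your boundedness step, using the continuity of $c\mapsto m_c$ and openness of $\mathfrak{C}_+$ to get a uniform $\bar\epsilon>0$ in Lemma~\ref{lem:linLog}, is a point the paper glosses over (it merely asserts that $(q_k)$ is bounded), so you are actually being more careful there. The trade-off is that the paper's approach is designed for uniformity with the $p$-continuity case, where the dependence of $\mathbb{J}^c_p$ on $p$ is not linear and a perturbation argument of the $\varepsilon q_0$ kind seems unavoidable; your argument fully exploits the simpler $c$-structure and would not transfer to the $p$-case. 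Both proofs establish the result.
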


\begin{proof}
The proof is analogous to that of Proposition~\ref{prop:p2qhatcontinuity}. We first prove that the optimal value $\mathbb{J}^c_p(\hat q)=\min_{q \in \bar{ \mathfrak{P} }_+} \mathbb{J}^c_p(q)$ is continuous in $c$ over $\mathfrak{C}_+$. To this end, let $c_1, c_2\in\mathfrak{C}_+$ be arbitrary, and let $q_1, q_2\in\bar{\mathfrak{P}}_+$ be the unique minimizers of $\mathbb{J}^{c_1}_p$ and $\mathbb{J}^{c_2}_p$, respectively. Next, exchanging $\mathbb{J}_{p_1}^c$ for $\mathbb{J}_p^{c_1}$ and $\mathbb{J}_{p_2}^c$ for $\mathbb{J}_p^{c_2}$ everywhere in the proof of Lemma~\ref{lem:Jcontinuity}, we see that it just remains to show that the absolute value of 
\begin{displaymath}
\mathbb{J}^{c_2}_p(q_1+\varepsilon q_0) - \mathbb{J}^{c_1}_p(q_1)\leq\|q_1\|\|c_2 -c_1\| +\varepsilon\left[\langle c_1,q_0\rangle +\int_K P\frac{Q_0}{Q_1}dx\right]
\end{displaymath}
can be made smaller than some preselected $\delta >0$ by choosing an appropriately small $\varepsilon$. This follows from the same argument as in Lemma~\ref{lem:Jcontinuity}, thus establishing the continuity of the optimal value.

Next, let $(c_k)$ be a sequence in $\mathfrak{C}_+$ converging to $c\in\mathfrak{C}_+$ as $k\to\infty$, and let 
 $q_k = \argmin_{q \in \bar{\mathfrak{P}}_+} \mathbb{J}^{c_k}_p(q)$ and $\hat{q} = \argmin_{q \in \bar{\mathfrak{P}}_+} \mathbb{J}^c_{p}(q)$. Since $(q_k)$ is bounded, there is a subsequence, also called $(q_k)$ such that $q_k\to q_\infty$. We assume that $q_\infty\neq \hat{q}$, and show that this leads to a contradiction. As in the proof of Proposition~\ref{prop:p2qhatcontinuity}, we first note that 
 \begin{displaymath}
\mathbb{J}^{c_k}_p(q_k)\geq\mathbb{J}^{c_k}_p(q_k+\varepsilon q_0) -\varepsilon \langle c_k,q_0\rangle
\end{displaymath}
and hence, by the continuity of the optimal value of $\mathbb{J}^c_{p}$ in $c$, that
\begin{displaymath}
\begin{split}
\mathbb{J}^c_{p}(\hat{q}) = \lim_{k \to \infty} \mathbb{J}^{c_k}_p(q_k) &\geq \lim_{k \to \infty} \left(\mathbb{J}^{c_k}_p(q_k + \varepsilon q_0) - \varepsilon \langle c_k, q_0 \rangle\right)\\
&=\mathbb{J}^{c}_p(q + \varepsilon q_0) - \varepsilon \langle c, q_0 \rangle ,
\end{split}
\end{displaymath}
where we have used the fact that  $\mathbb{J}^c_{p}$ is continuous in  $(c,q)\in\mathfrak{C}_+\times\mathfrak{P}_+$. Then  letting $\varepsilon \to 0$ leads to a contradiction, proving that $q_\infty=\hat{q}$, as claimed.
\end{proof}

\section{Errata}\label{errata}
The proof of the Lemma 3.1 in \cite{byrnes2006thegeneralized} contains an error emanating from an incorrect use of the dominate convergence theorem, which invalidates Theorem 1.10 in \cite{byrnes2006thegeneralized}. The correct statement is the one-dimensional version of Theorem~\ref{thm:mainMoment} in the present paper.

\bibliographystyle{plain}
\bibliography{ref}

\end{document}